\newtheorem{theorem}{Theorem}[section]
\newtheorem{corollary}[theorem]{Corollary}
\newtheorem{fact}[theorem]{Fact}
\newtheorem{lemma}[theorem]{Lemma}
\newtheorem{proposition}[theorem]{Proposition}
\newtheorem{question}[theorem]{Question}
\newtheorem{claim}[theorem]{Claim}
\theoremstyle{definition}
\newtheorem{definition}[theorem]{Definition}
\newtheorem{remark}[theorem]{Remark}
\DeclareMathOperator{\cof}{cof}
\DeclareMathOperator{\dom}{dom}
\DeclareMathOperator{\ult}{\mathrm{Ult}}
\DeclareMathOperator{\sch}{\mathrm{SCH}}
\DeclareMathOperator{\Lev}{\mathrm{Lev}}
\DeclareMathOperator{\hod}{\mathrm{HOD}}
\DeclareMathOperator{\add}{\mathrm{Add}}
\DeclareMathOperator{\hei}{\mathrm{ht}}
\DeclareMathOperator{\gch}{\mathrm{GCH}}
\DeclareMathOperator{\zfc}{\mathrm{ZFC}}
\def\br{\blacktriangleright}
\begin{document}
\bibliographystyle{alpha}
\baselineskip=17pt

\title[How far is almost strong compactness from strong compactness]{How far is almost strong compactness from strong compactness}

\author[Zhixing You]{Zhixing You}
\address{Department of Mathematics, Bar-Ilan University, Ramat-Gan 5290002, Israel}
\email{zhixingy121@gmail.com}

\author[Jiachen Yuan]{Jiachen Yuan}
\address{School of Mathematics, University of Leeds, Leeds LS2 9JT, UK}
\email{J.Yuan@leeds.ac.uk}

\date{}
\thanks{
The first author was supported by the European Research Council (grant agreement ERC-2018-StG 802756).
The second author was supported by a UKRI Future Leaders Fellowship [MR/T021705/2].
For the purpose of open access, the author(s) has applied a Creative Commons
Attribution (CC BY) licence to any Author Accepted Manuscript version arising.}
\begin{abstract}
Bagaria and Magidor introduced the notion of almost strong compactness,
which is very close to the notion of strong compactness.
Boney and Brooke-Taylor asked whether the least almost strongly compact cardinal is strongly compact.
Goldberg gives a positive answer in the case $\sch$ holds from below and the least almost strongly compact cardinal has uncountable cofinality.
In this paper, we give a negative answer for the general case.
Our result also gives an affirmative answer to a question of Bagaria and Magidor.
\end{abstract}
\subjclass[2010]{03E35, 03E55}
\keywords{$\delta$-strong compactness, almost strong compactness, Suslin tree}
\maketitle
\section{introduction}
The notions of $\delta$-strong compactness and almost strong compactness (see Definition \ref{dscc})
were introduced by Bagaria and Magidor in \cite{BM2014,BMO2014}.
They are weak versions of strong compactness,
and characterize many natural compactness properties of interest in different areas.
See \cite{BM2014,BMO2014,U2020} for details.

Like strong compactness, $\delta$-strong compactness can be characterized
in terms of compactness properties of infinitary languages, elementary embeddings, ultrafilters, etc..
In addition, many interesting properties following from strong compactness
are a consequence of $\delta$-strong compactness.
For example, if $\kappa$ is the least $\delta$-strongly compact cardinal for some uncountable
cardinal $\delta$, then the \emph{Singular Cardinal Hypothesis} ($\sch$) holds above $\kappa$;
and for every regular cardinal $\lambda \geq \kappa$,
stationary reflection holds for every stationary subset of $\mathrm{S}^{\lambda}_{<\delta}=\{\alpha<\lambda \ | \ \cof(\alpha)<\delta \}$.
In addition, Goldberg in \cite[Corollary 2.9]{Gol21} proved that Woodin's $\mathrm{HOD}$ Dichotomy\footnote{
Suppose $\kappa$ is $\omega_1$-strongly compact.
Then either all sufficiently large regular cardinals are measurable in $\hod$
or every singular cardinal
$\lambda$ greater than $\kappa$ is singular in $\hod$ and $\lambda^{+N}=\lambda^+$.}
is a consequence of $\omega_1$-strong compactness.

$\delta$-strong compactness, almost strong compactness, and strong compactness are close compactness principles.
Magidor in \cite{Mag76} proved that
consistently the least strongly compact cardinal, say $\kappa$, is the least measurable cardinal. In this case,
$\kappa$ is also the least $\delta$-strongly compact cardinal for every uncountable cardinal $\delta<\kappa$ and the least almost strongly compact cardinal.
In addition, Goldberg in \cite[Proposition 8.3.7]{Gol20} proved that under the assumption of the \emph{Ultrapower Axiom},
which is expected to hold in all canonical inner models, for any uncountable cardinal $\delta$,
the least $\delta$-strongly compact cardinal is strongly compact.

On the other hand, Bagaria and Magidor in \cite{BMO2014}
turned a supercompact cardinal $\kappa$ into
the least $\delta$-strongly compact cardinal by using a suitable Radin forcing of length some measurable cardinal $\delta<\kappa$.
Thus in the generic extension, $\kappa$ is singular, which implies it is not strongly compact.
This separates $\delta$-strong compactness from strong compactness.

The more subtle case is between almost strong compactness and strong compactness.
Obviously if $\kappa$ is a strongly compact cardinal, the successor of a strongly compact cardinal,
or a limit of almost strongly compact cardinals, then it is almost strongly compact.
For the other direction,
Menas essentially proved that
if an almost strongly compact cardinal is measurable, then it is strongly compact (see \cite[Theorem 22.19]{Kana}).
Recently, Goldberg in \cite[Theorem 5.7]{G2020} proved that
assuming the $\sch$ holds, every almost strongly compact cardinal $\kappa$ of uncountable cofinality is trivial, i.e.,
one of the three cases mentioned above.
In particular, noting that $\sch$ holds above the least almost strongly compact cardinal, Goldberg \cite[Theorem 5.8]{G2020} proved that for every ordinal $\alpha>0$, if the $(\alpha + 1)$-st almost strongly compact limit cardinal has uncountable cofinality, then it is strongly compact.

But the following question posed by
Boney and Brooke-Taylor remained open:
\begin{question}[\cite{G2020}\label{q2}]
Is the least almost strongly compact cardinal necessarily strongly compact?
\end{question}

We will show Theorem 5.7 and Theorem 5.8 in \cite{G2020} may be no longer true when the
cofinality assumption is dropped in Section \ref{sec4}.
The point is that Fodor's lemma does not hold for the least infinite cardinal $\omega$.
This answers Question \ref{q2} in the negative. 

To achieve this,
we need a positive answer to the following question of Bagaria and Magidor:
\begin{question}[Bagaria, Magidor]\label{QBM}
Is there a class (possibly proper) $\mathcal{K}$ with $|\mathcal{K}| \geq 2$, and a $\delta_{\kappa}<\kappa$ for every $\kappa \in \mathcal{K}$, so that $\kappa$ is the least exactly $\delta_{\kappa}$-strongly compact cardinal for every $\kappa \in \mathcal{K}$?
\end{question}
For example, if we can give an affirmative answer to the above question when $\mathcal{K}$ has order type $\omega$,
and $\delta_{\kappa}<\kappa$ is a measurable cardinal above $\max(\mathcal{K} \cap \kappa)$ for every $\kappa \in \mathcal{K}$,
then $\sup(\mathcal{K})$ may be the least almost strongly compact cardinal.

We give a positive answer for Question \ref{QBM} in Section \ref{sec4}. To achieve this,
we recall a new construction of Gitik in \cite[Theorem 3.1]{Git2020}.
He developed Kunen's basic idea  of a construction of a model with a $\kappa$-saturated ideal over $\kappa$ (see \cite{Kun78}).
After some preparation, he added a $\delta$-ascent $\kappa$-Suslin tree at a supercompact cardinal $\kappa$.
This turned $\kappa$ into the least $\delta$-strongly compact cardinal.
Further analysis shows that $\kappa$ is not $\delta^+$-strongly compact,
which means that we may control the compactness of $\kappa$.
In addition, the forcing for adding a $\delta$-ascent $\kappa$-Suslin tree has nice closure properties, i.e., ${<}\kappa$-strategically closed and ${<}\delta$-directed closed.
So we may apply this method and the well-known method of producing class many non-supercompact strongly compact cardinals simultaneously,
to get hierarchies of $\delta$-strongly compact cardinals for different $\delta$ simultaneously.
Thus we may obtain an affirmative answer to Question \ref{QBM}.
\subsection*{The structure of the paper}
In this paper, Section \ref{sec2} covers some technical preliminary information and basic definitions. In Section \ref{sec3},
we give some variants of Gitik's construction.
In Section \ref{sec4}, building on the results of the previous section,
we construct hierarchies of $\delta$-strongly compact cardinals for different $\delta$ simultaneously
and provide nontrivial examples of almost strongly compact cardinals, in particular answering Question \ref{q2} and Question \ref{QBM}.
\section{preliminaries}\label{sec2}
\subsection{Large cardinals} \label{scc}
We assume the reader is familiar with the large cardinal notions of Mahloness, measurability, strongness, strong compactness, and supercompactness (see \cite{J2003} or \cite{Kana} for details).

We first review the definitions and basic properties of $\delta$-strongly compact cardinals and almost strongly compact cardinals.
\begin{definition}(\cite{BM2014,BMO2014})\label{dscc}
Suppose $\kappa \geq \delta$ are two uncountable cardinals,
\begin{enumerate}
  \item For any $\theta \geq \kappa$, $\kappa$ is \emph{$(\delta,\theta)$-strongly compact} if there is an elementary embedding $j:V \rightarrow M$ with $M$ transitive such that $\mathrm{crit}(j) \geq \delta$, and there is a $D \in M$ such that $j''\theta \subseteq D$ and $M \models |D|<j(\kappa)$.
  \item $\kappa$ is \emph{$\delta$-strongly compact} if $\kappa$ is $(\delta,\theta)$-strongly compact for any $\theta \geq \kappa$;
  $\kappa$ is \emph{exactly $\delta$-strongly compact} if $\kappa$ is $\delta$-strongly compact but not $\delta^+$-strongly compact.
  \item $\kappa$ is \emph{almost strongly compact} if $\kappa$ is $\delta$-strongly compact for every uncountable cardinal $\delta<\kappa$.
\end{enumerate}
\end{definition}
By the definition above, it is easy to see that $\kappa$ is $\kappa$-strongly compact if and only if $\kappa$ is strongly compact, and the former implies the latter for these sentences: $\kappa$ is strongly compact, $\kappa$ is almost strongly compact, $\kappa$ is $\delta'$-strong compactness with $\delta<\delta'<\kappa$, and $\kappa$ is $\delta$-strongly compact.

Usuba characterized $\delta$-strong compactness in terms of $\delta$-complete uniform ultrafilters, which generalized Ketonen's result.
\begin{theorem}[\cite{U2020}]\label{tKU}
Suppose $\kappa \geq \delta$ are two uncountable cardinals. Then $\kappa$
is $\delta$-strongly compact if and only if for every regular $\lambda \geq \kappa$, there is a
$\delta$-complete uniform ultrafilter over $\lambda$, i.e.,
there ia a $\delta$-complete ultrafilter $U$ over $\lambda$ such that every $A \in U$ has cardinality $\lambda$.
\end{theorem}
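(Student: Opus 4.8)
The plan is to run both directions through the standard reformulation of $(\delta,\theta)$-strong compactness in terms of fine ultrafilters. First I would record the reduction: $\kappa$ is $(\delta,\theta)$-strongly compact if and only if there is a $\delta$-complete \emph{fine} ultrafilter on $\mathcal{P}_\kappa(\theta)$, where fineness means $\{s : \alpha\in s\}$ has measure one for each $\alpha<\theta$. For one direction of this reduction, given $j:V\to M$ and $D$ as in Definition \ref{dscc} one may assume $D\subseteq j(\theta)$, and since $M\models|D|<j(\kappa)$ one reads $D$ as an element of $j(\mathcal{P}_\kappa(\theta))$; then $U=\{A\subseteq\mathcal{P}_\kappa(\theta) : D\in j(A)\}$ is $\delta$-complete because $\crit(j)\ge\delta$ and fine because $j''\theta\subseteq D$. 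For the converse, the ultrapower by such a $U$ witnesses $(\delta,\theta)$-strong compactness with $D=[\mathrm{id}]_U$. The theorem thereby becomes: there exist $\delta$-complete fine ultrafilters on $\mathcal{P}_\kappa(\theta)$ for all $\theta\ge\kappa$ if and only if there exist $\delta$-complete uniform ultrafilters on all regular $\lambda\ge\kappa$.

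For the easy direction, fix a regular $\lambda\ge\kappa$ and apply $(\delta,\lambda)$-strong compactness to obtain $j:V\to M$ with $\crit(j)\ge\delta$ and $D\in M$ with $j''\lambda\subseteq D$ and $M\models|D|<j(\kappa)$. Intersecting $D$ with $j(\lambda)$ gives a subset of $j(\lambda)$ of $M$-cardinality $<j(\kappa)\le j(\lambda)$ containing $j''\lambda$; since $j(\lambda)$ is regular in $M$, this forces $\sup(j''\lambda)<j(\lambda)$. I would then pick a seed $\gamma$ with $\sup(j''\lambda)\le\gamma<j(\lambda)$ and set $U_\gamma=\{X\subseteq\lambda : \gamma\in j(X)\}$. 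This is an ultrafilter; it is $\delta$-complete because $j$ commutes with intersections of length $<\delta\le\crit(j)$; and it is uniform because for each $\beta<\lambda$ we have $j(\beta)<\gamma$, hence $\gamma\in j(\lambda\setminus\beta)$, so every tail lies in $U_\gamma$ and, $\lambda$ being regular, every member of $U_\gamma$ has size $\lambda$.

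The hard direction is the heart of the theorem, and I would prove it by induction on $\theta$, constructing a $\delta$-complete fine ultrafilter $F_\theta$ on $\mathcal{P}_\kappa(\theta)$. A useful preliminary is the projection lemma: if $\theta\le\lambda$, then pushing a $\delta$-complete fine ultrafilter on $\mathcal{P}_\kappa(\lambda)$ forward along $s\mapsto s\cap\theta$ yields one on $\mathcal{P}_\kappa(\theta)$, so it suffices to produce $F_\lambda$ for a proper class of regular $\lambda$, though the induction must still pass through the singular stages. At a successor $\theta=\nu^+$ I would combine the inductively given $F_\nu$ with a $\delta$-complete uniform ultrafilter $W$ on the regular cardinal $\nu^+$ by a sum: writing $\mathcal{P}_\kappa(\nu^+)=\bigcup_{\eta<\nu^+}\mathcal{P}_\kappa(\eta)$ and transporting $F_\nu$ along a surjection of $\nu$ onto $\eta$ to a fine ultrafilter $F_\eta$ on each $\mathcal{P}_\kappa(\eta)$, one declares $A\in F_{\nu^+}$ iff $\{\eta : A\cap\mathcal{P}_\kappa(\eta)\in F_\eta\}\in W$. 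Fineness follows from the uniformity of $W$, and $\delta$-completeness from that of $W$ together with the $F_\eta$. A limit $\theta$ of cofinality $\mu\ge\kappa$ is handled identically, summing the $F_{\theta_i}$ along a cofinal sequence $\langle\theta_i : i<\mu\rangle$ over a $\delta$-complete uniform ultrafilter on the regular cardinal $\mu$.

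The main obstacle is the limit step when $\mu=\cof(\theta)<\kappa$. Here the hypothesis supplies no uniform ultrafilter on $\mu$ to sum over, and when $\mu<\delta$ there is no nonprincipal $\delta$-complete ultrafilter on $\mu$ at all; moreover the range $\delta\le\mu<\kappa$ is exactly what is empty in Ketonen's original case $\delta=\kappa$, so this is where the passage to $\delta$-completeness requires genuinely new bookkeeping. The resolution I would pursue is to realise the $F_{\theta_i}$, $i<\mu$, inside a single ultrapower $j:V\to M$ with $\crit(j)\ge\delta$ — obtained by combining the ultrapowers for the $\theta_i$ into one directed limit, with $\delta$-completeness guaranteeing well-foundedness at each stage — and then to amalgamate the covering sets via the diagonal map $\langle s_i\rangle\mapsto\bigcup_i s_i$: setting $D=\bigcup_i D_i$ covers $j''\theta=\bigcup_i j''\theta_i$, and $|D|^M<j(\kappa)$ because $\mu<\kappa\le j(\kappa)$ and $j(\kappa)$ is regular in $M$. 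Care is needed to keep $\bigcup_i s_i$ inside $\mathcal{P}_\kappa(\theta)$ — automatic when $\mu<\cof(\kappa)$ and requiring extra attention when $\kappa$ is singular — and to verify that the amalgamated object is a genuine $\delta$-complete ultrafilter rather than a mere filter. This cofinality-below-$\kappa$ analysis is where I expect to spend most of the effort.
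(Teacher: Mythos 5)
The paper does not actually prove this statement: Theorem \ref{tKU} is quoted as a black box from Usuba \cite{U2020} (generalizing Ketonen), so your attempt can only be measured against the known proofs. Your reformulation via $\delta$-complete fine ultrafilters on $\mathcal{P}_{\kappa}(\theta)$ is correct, your ``easy'' direction is correct and complete, and the sum constructions at successor stages and at limits of cofinality $\geq\kappa$ are the standard Ketonen-style steps and work as you describe. The problem is exactly the case you flag as the heart of the matter, $\cof(\theta)=\mu<\kappa$, and there your proposed resolution does not work: it contains an unjustified step that is, in effect, the theorem itself.

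Concretely: if you build a linear iteration of the ultrapowers by the $F_{\theta_i}$ and pass to the direct limit $j\colon V\rightarrow M$, then each moved covering set $D_i$ (the image under the tail of the iteration of the stage-$i$ set $[\mathrm{id}]$) is an \emph{element} of $M$, but the sequence $\langle D_i \mid i<\mu\rangle$ is external to $M$: a direct limit of an infinite iteration is not closed under even $\omega$-sequences of its own elements, and no element of $M$ is definable as this union, since every member of $M$ has the form $j_{i,\mu}(x)$ for a single stage $i$. So you have no grounds for asserting $D=\bigcup_{i<\mu}D_i\in M$, and the cardinality estimate ``$|D|^M<j(\kappa)$ because $\mu<\kappa$ and $j(\kappa)$ is regular in $M$'' presupposes both that $D\in M$ and that $M$ sees $D$ as a $\mu$-indexed union of small sets (it also presupposes $\kappa$ regular, which the theorem does not grant). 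But ``there is $D\in M$ with $j''\theta\subseteq D$ and $|D|^M<j(\kappa)$'' \emph{is} $(\delta,\theta)$-strong compactness; assuming the amalgamated set lands in $M$ is assuming the conclusion. A telling symptom is that your small-cofinality step never uses the hypothesis of the theorem (uniform $\delta$-complete ultrafilters on regular cardinals, e.g.\ on $\theta^{+}$) at all: it would prove in ZFC that the $(\delta,\cdot)$-strong compactness degrees are closed under suprema of cofinality $<\kappa$, with no global assumption, which is far stronger than anything the construction delivers. The actual proofs (Ketonen's, and Usuba's $\delta$-complete version) get through the singular stages by a genuinely different mechanism --- carrying a stronger induction hypothesis of filter-extension type and exploiting the uniform ultrafilter on $\theta^{+}$ together with decomposability arguments for countably complete ultrafilters --- not by amalgamating covering sets across an infinite iteration. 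As it stands, your outline proves the easy direction and the easy cases of the hard direction, and leaves the crux unproved.
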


Next, we list two useful lemmas (see \cite[Lemma 2.1, Lemma 2.4]{AC2001} for details).
\begin{lemma}\label{lem10}
Let $\kappa$ be $2^\kappa$-supercompact and strong.
Assume $j : V \rightarrow M$
is a $2^{\kappa}$-supercompact embedding of $\kappa$.
Then $\kappa$ is a strong cardinal limit of strong cardinals in $M$.
\end{lemma}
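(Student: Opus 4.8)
The plan is to prove the two assertions---that $M \models$ ``$\kappa$ is strong'' and that $M \models$ ``$\kappa$ is a limit of strong cardinals''---by exploiting the representing functions available in the supercompactness ultrapower together with the strong cardinals lying below $\kappa$. Fix a normal fine measure $U$ on $\mathcal{P}_{\kappa}(2^{\kappa})$ with $j=j_U : V \to M = \ult(V,U)$. I will use the standard facts that $\crit(j)=\kappa$, $j(\kappa)>2^{\kappa}$, $M$ is closed under $2^{\kappa}$-sequences, $V_{\kappa+2}\subseteq M$ (so that every $(\kappa,\theta)$-extender with $\theta\le 2^{\kappa}$ and every normal measure on $\kappa$ lies in $M$), and that every element of $M$ has the form $j(f)(j''2^{\kappa})$ for some $f\in V$. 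The first preliminary step, proved by a reflection argument of the kind used for the companion Lemma~2.1, is to record that the strong cardinals are unbounded below $\kappa$ in $V$; the $2^{\kappa}$-supercompactness of $\kappa$ is exactly what drives this reflection.

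Granting this, the heart of the argument is to show $M\models$ ``$\kappa$ is strong,'' i.e.\ that for every ordinal $\lambda$ there is in $M$ an extender $\bar E$ with $\crit(\bar E)=\kappa$, $j_{\bar E}^{M}(\kappa)>\lambda$ and $(V_{\lambda})^{M}\subseteq \ult(M,\bar E)$. For $\lambda\le 2^{\kappa}$ this is immediate: a witnessing extender computed in $V$ has all of its component measures in $V_{\kappa+2}\subseteq M$ and, being coded by a subset of $V_{\lambda}$ of size at most $2^{\kappa}$, is captured by the $2^{\kappa}$-closure of $M$. For arbitrary $\lambda$ I would instead build $\bar E$ as an ultrapower average. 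Choose $f,g:\mathcal{P}_{\kappa}(2^{\kappa})\to V$ so that $f(P)$ is a strong cardinal $\kappa_P<\kappa$ with $[f]_U=\kappa$ (using unboundedness of the strong cardinals below $\kappa$ together with normality and fineness), and $g(P)$ is a $V$-extender with critical point $\kappa_P$ witnessing that $\kappa_P$ is $\eta_P$-strong, where the ordinals $\eta_P$ are chosen with $[P\mapsto\eta_P]_U=\lambda$. Then $\bar E=[g]_U\in M$ is an extender with critical point $[f]_U=\kappa$, and a routine ultrapower computation shows that $\bar E$ witnesses that $\kappa$ is $\lambda$-strong in $M$. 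Since $\lambda$ was arbitrary, $M\models$ ``$\kappa$ is strong.''

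Finally, the ``limit of strong cardinals'' clause in $M$ I would derive by reflection. Since $M\models$ ``$\kappa$ is strong'' and $\crit(j)=\kappa$ gives $j(\beta)=\beta$ for $\beta<\kappa$, for each $\beta<\kappa$ the statement ``there is a strong cardinal in $(\beta,j(\kappa))$'' holds in $M$, witnessed by $\kappa$; this is the $j$-image of ``there is a strong cardinal in $(\beta,\kappa)$.'' Pulling this back, and checking that a strong cardinal below $\kappa$ remains strong in $M$---again because the relevant extenders are either captured by $V_{\kappa+2}\subseteq M$ or produced by the same averaging construction applied below $\kappa$---yields strong cardinals of $M$ cofinally in $\kappa$. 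I expect the main obstacle to be the averaging step: one must verify that $[g]_U$ is genuinely an extender in $M$ with critical point exactly $\kappa$ rather than $j(\kappa)$, that $\ult(M,[g]_U)$ is computed correctly and contains $(V_{\lambda})^{M}$, and that the representing functions $f$ and $g$ can be chosen coherently. This is precisely where the unboundedness of strong cardinals below $\kappa$, and the normality and fineness of $U$, are indispensable.
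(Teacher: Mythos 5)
The paper itself gives no proof of this lemma (it is quoted from Apter--Cummings, \cite{AC2001}), so your proposal must stand on its own, and it has a genuine circularity at its core. By normality, $\kappa=[P\mapsto P\cap\kappa]_U$, so any $f$ with $[f]_U=\kappa$ must satisfy $f(P)=P\cap\kappa$ for $U$-almost every $P$. Hence your requirement that $f(P)$ be a strong cardinal with $[f]_U=\kappa$ is literally the assertion that $\{P : P\cap\kappa \text{ is strong in } V\}\in U$, which by {\L}o\'{s}'s theorem is exactly the conclusion $M\models$ ``$\kappa$ is strong'' that you are trying to prove. The justification you offer---unboundedness of the strong cardinals below $\kappa$, plus normality and fineness---cannot close this gap: for $A\subseteq\kappa$ one has $\{P:P\cap\kappa\in A\}\in U$ if and only if $\kappa\in j(A)$, and an unbounded $A$ need not satisfy this (the non-measurable inaccessibles below $\kappa$ are unbounded, yet $\kappa\notin j(A)$ because $V_{\kappa+2}\subseteq M$ makes $\kappa$ measurable in $M$). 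Moreover, your preliminary step---that strong cardinals are unbounded below $\kappa$ in $V$---is itself not a routine reflection consequence of $2^{\kappa}$-supercompactness: supercompactness reflection only yields cardinals $\alpha<\kappa$ that are ${<}\kappa$-strong (strong in $V_{\kappa}$), and promoting ``${<}\kappa$-strong'' to ``strong'' requires invoking the full strongness of $\kappa$ again; this promotion is the real mathematical content of the Apter--Cummings argument, and it is precisely what is missing here. In short, the key fact that the $V$-strong cardinals have measure one for the normal measure on $\kappa$ derived from $j$ is assumed, not proved.

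There is also a second, independent error: the case $\lambda\le 2^{\kappa}$ is not ``immediate.'' If $E$ is an extender with critical point $\kappa$ and $V_{\kappa+2}\subseteq \mathrm{Ult}(V,E)$, then every element of $V_{\kappa+2}$ has the form $j_E(h)(a)$ with $h:[\kappa]^{<\omega}\rightarrow V_{\kappa+1}$ and $a\in[\mathrm{lh}(E)]^{<\omega}$, so $2^{2^{\kappa}}=|V_{\kappa+2}|\le 2^{\kappa}\cdot\mathrm{lh}(E)$, forcing $\mathrm{lh}(E)\ge 2^{2^{\kappa}}>2^{\kappa}$. Thus no extender witnessing even $(\kappa+2)$-strongness is a set of size $2^{\kappa}$, and the $2^{\kappa}$-closure of $M$ does not hand such witnesses to $M$; the same counting applies inside $M$, since $V_{\kappa+2}^{M}=V_{\kappa+2}$. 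Your closure argument is valid only for $\lambda\le\kappa+1$. Finally, in the last paragraph the transfer of $V$-strong cardinals $\alpha<\kappa$ into $M$-strong cardinals suffers from the same unresolved difficulty; the clean route is the opposite one: once $\{\alpha<\kappa : \alpha \text{ is strong in } V\}$ is known to lie in the derived normal measure, {\L}o\'{s}'s theorem together with normality (measure-one sets are unbounded, so almost every $\alpha$ is a limit of such cardinals) immediately yields that $M\models$ ``$\kappa$ is a strong cardinal limit of strong cardinals,'' with no need to transfer individual witnesses at all.
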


\begin{lemma}\label{lem11}
Suppose $\kappa$ is $\lambda$-supercompact for some strong limit cardinal $\lambda$ of cofinality greater than $\kappa$.
Let $j:V \rightarrow M$ be a $\lambda$-supercompact embedding such that $M \vDash `` \kappa$ is not $\lambda$-supercompact$"$.
Then in $M$, there is no strong cardinal in $(\kappa,\lambda]$.
\end{lemma}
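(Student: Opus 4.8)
The plan is to argue by contraposition: assuming that $M$ has a strong cardinal $\gamma$ with $\kappa<\gamma\le\lambda$, I will derive that $M$ thinks $\kappa$ is $\lambda$-supercompact, contradicting the hypothesis on $j$. The argument has two ingredients: a reflection computation showing that $M$ retains the $\delta$-supercompactness of $\kappa$ for all $\delta<\lambda$, and the classical fact that a strong cardinal upgrades partial supercompactness below it to full supercompactness.

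First I would pin down how much of $V$ sits inside $M$. Since $j$ is a $\lambda$-supercompact embedding, ${}^{\lambda}M\subseteq M$, and because $\lambda$ is a strong limit this yields $V_{\lambda}\subseteq M$ together with the agreement $V_{\lambda+1}^{V}=V_{\lambda+1}^{M}$; in particular $\mathrm{cof}^{M}(\lambda)=\mathrm{cof}^{V}(\lambda)>\kappa$ and $\lambda$ is still a strong limit in $M$. Now fix any $\delta<\lambda$. In $V$ there is a normal fine measure $\mathcal{U}_{\delta}$ on $\mathcal{P}_{\kappa}(\delta)$ witnessing that $\kappa$ is $\delta$-supercompact, and since $\delta+3<\lambda$ this measure lies in $V_{\delta+3}\subseteq V_{\lambda}\subseteq M$. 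Being a normal, fine, $\kappa$-complete ultrafilter is verified entirely inside $V_{\delta+3}=V_{\delta+3}^{M}$, so $\mathcal{U}_{\delta}$ still witnesses in $M$ that $\kappa$ is $\delta$-supercompact. As $\delta<\lambda$ was arbitrary, $M\models$ ``$\kappa$ is $<\lambda$-supercompact''.

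Next, suppose toward a contradiction that $M\models$ ``$\gamma$ is strong'' for some $\gamma\in(\kappa,\lambda]$. Then every $\delta<\gamma$ satisfies $\delta<\lambda$, so by the previous paragraph $M\models$ ``$\kappa$ is $\delta$-supercompact''; that is, $M$ thinks $\kappa$ is $<\gamma$-supercompact and lies below the strong cardinal $\gamma$. I would then invoke, inside $M$, the $\zfc$ theorem that a strong cardinal propagates $<\gamma$-supercompactness below it to full supercompactness: given a target $\theta\ge\gamma$, take $k\colon M\to N$ witnessing that $\gamma$ is $\theta'$-strong for some $\theta'>\theta$, note that $\crit(k)=\gamma>\kappa$ forces $k(\kappa)=\kappa$, use elementarity to get $N\models$ ``$\kappa$ is $<k(\gamma)$-supercompact'' with $k(\gamma)>\theta$, and then pull the witnessing measure on $\mathcal{P}_{\kappa}(\theta)$ back into $M$ via the rank agreement $V_{\theta+3}^{N}=V_{\theta+3}^{M}$. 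Since $\theta$ is arbitrary, $M\models$ ``$\kappa$ is supercompact'', and in particular $M\models$ ``$\kappa$ is $\lambda$-supercompact'', contradicting the assumption on $j$. Hence $M$ has no strong cardinal in $(\kappa,\lambda]$.

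The main obstacle is the second ingredient: verifying that strongness of $\gamma$ genuinely upgrades $<\gamma$-supercompactness of $\kappa$ to full supercompactness, and in particular checking the initial-segment agreement $V_{\theta+3}^{N}=V_{\theta+3}^{M}$ that lets the reflected supercompactness measure descend from $N$ back to $M$. The strong-limit and cofinality hypotheses on $\lambda$ are precisely what guarantee the rank agreement between $V$ and $M$ used in the first ingredient, so that the $\delta$-supercompactness witnesses survive the passage into $M$; the remainder is routine bookkeeping about which rank segments of $V$ are captured by the $\lambda$-supercompact embedding $j$.
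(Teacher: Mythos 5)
Your proof is correct and follows exactly the argument the paper intends (the paper itself just cites Apter--Cummings for this lemma): first show $\kappa$ remains ${<}\lambda$-supercompact in $M$ using ${}^{\lambda}M\subseteq M$ together with the strong-limit hypothesis (so $V_{\lambda}\subseteq M$ and the measures on $\mathcal{P}_{\kappa}(\delta)$ for $\delta<\lambda$ transfer), then invoke the standard fact that a strong cardinal $\gamma\in(\kappa,\lambda]$ upgrades ${<}\gamma$-supercompactness of $\kappa$ to full supercompactness, contradicting $M\vDash``\kappa$ is not $\lambda$-supercompact''. The details you supply (rank computations, arranging $k(\gamma)>\theta$, pulling measures back via rank agreement) are exactly the routine ones needed.
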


\subsection{Forcing and large cardinals} In this subsection,
we recall some well-known basic techniques
for lifting elementary embeddings. Readers can refer elsewhere for details.

For a partial order $\mathbb{P}$ and an ordinal $\kappa$,
we say $\mathbb{P}$ is $\kappa$-strategically closed if and only if in a two-person game,
in which the players construct a decreasing sequence $\langle p_{\alpha} \ | \ \alpha<\kappa \rangle$ of conditions in $\mathbb{P}$,
with Player Odd playing at odd stages and Player Even playing at even and limit stages (choosing trivial condition at stage $0$),
player Even has a strategy to ensure the game can always be continued.
$\mathbb{P}$ is ${<}\kappa$-strategically closed if and only if for any $\alpha<\kappa$,
the game is $\alpha$-strategically closed.
We say $\mathbb{P}$ is ${<}\kappa$-directed closed if and only if any directed subset
$D \subseteq \mathbb{P}$ of size less than $\kappa$ has a lower bound in $\mathbb{P}$.
Here $D$ is directed if every two elements of $D$ have a lower bound in
$\mathbb{P}$. We say $\mathbb{P}$ is ${<}\kappa$-closed if and only if any decreasing subset
$D \subseteq \mathbb{P}$ of size less than $\kappa$ has a lower bound in $\mathbb{P}$.

We use $\add(\kappa,1)=\{f \subseteq \kappa \ | \ |f|<\kappa \}$ for the Cohen forcing that adds a subset of $\kappa$.

The following is Easton’s lemma, see \cite[Lemma 15.19]{J2003} for details.
\begin{lemma}\label{clai3}
Suppose that $G \times H$ is $V$-generic for $\mathbb{P} \times \mathbb{Q}$,
where $\mathbb{P}$ is ${<}\kappa$-closed and $\mathbb{Q}$ satisfies the $\kappa$-c.c..
Then $\mathbb{P}$ is ${<}\kappa$-distributive in $V[H]$.
In other words, $\mathrm{Ord}^{<\kappa} \cap V[G][H] \subseteq V[H]$.
\end{lemma}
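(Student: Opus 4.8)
The plan is to prove the equivalent formulation that $\mathbb{P}$ adds no new sequence of ordinals of length ${<}\kappa$ over $V[H]$, i.e. $\mathrm{Ord}^{<\kappa}\cap V[G][H]\subseteq V[H]$, by a density argument carried out in the ground model $V$. Fix $\alpha<\kappa$, a $\mathbb{P}\times\mathbb{Q}$-name $\dot f$, and a condition $(p_0,q_0)$ forcing $\dot f:\check\alpha\to\mathrm{Ord}$. The goal is to produce $p_\alpha\le p_0$ in $\mathbb{P}$ such that $(p_\alpha,q_0)$ forces $\dot f$ to be (the interpretation of) a $\mathbb{Q}$-name, hence to lie in $V[H]$. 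Since $(p_0,q_0)$ is arbitrary, this shows that the set of conditions forcing $\dot f\in V[H]$ is dense below $(p_0,q_0)$, and meeting it with the generic yields $f\in V[H]$.

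The heart of the construction is to build, by recursion in $V$, a decreasing sequence $\langle p_\xi : \xi\le\alpha\rangle$ in $\mathbb{P}$ below $p_0$ together with, for each $\xi<\alpha$, a maximal antichain $A_\xi\subseteq\mathbb{Q}$ below $q_0$ and a function $F_\xi:A_\xi\to\mathrm{Ord}$ such that $(p_{\xi+1},q)\Vdash\dot f(\check\xi)=\check{F_\xi(q)}$ for every $q\in A_\xi$. At a successor stage, starting from $p_\xi$, I would invoke the $\kappa$-c.c. of $\mathbb{Q}$ to obtain a maximal antichain $A_\xi=\{q_i : i<\mu_\xi\}$ below $q_0$ with $\mu_\xi<\kappa$; then, processing the $q_i$ one at a time, I would build an auxiliary decreasing sequence in $\mathbb{P}$ whose $(i+1)$-st term decides $\dot f(\xi)$ in conjunction with $q_i$, taking lower bounds at limits via the ${<}\kappa$-closure of $\mathbb{P}$, and let $p_{\xi+1}$ be a lower bound of this length-$(\mu_\xi+1)$ sequence. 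At limit stages of the outer recursion I again use ${<}\kappa$-closure to find a lower bound of $\langle p_\eta : \eta<\xi\rangle$. Every sequence to which closure is applied has length ${<}\kappa$, so the recursion goes through.

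Once $p_\alpha$ is obtained, the data $\langle (A_\xi,F_\xi):\xi<\alpha\rangle$ lies in $V$ and serves as a $\mathbb{Q}$-name for $f$: in $V[H]$, for each $\xi<\alpha$ there is a unique $q\in A_\xi\cap H$ (as $A_\xi$ is a maximal antichain below $q_0$ and $q_0\in H$), and setting $\tilde f(\xi)=F_\xi(q)$ defines a function $\tilde f\in V[H]$. A final check confirms that in any $V$-generic extension by $\mathbb{P}\times\mathbb{Q}$ containing $(p_\alpha,q_0)$ one has $f=\tilde f$, since for each $\xi<\alpha$ the unique $q\in A_\xi\cap H$ satisfies $(p_\alpha,q)\Vdash\dot f(\check\xi)=\check{F_\xi(q)}$. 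Hence $(p_\alpha,q_0)$ forces $\dot f\in V[H]$, as required.

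The main obstacle is the coordination between the closure of $\mathbb{P}$ and the chain condition of $\mathbb{Q}$ at the successor step. The conditions in $\mathbb{P}$ that decide $\dot f(\xi)$ against distinct elements of the antichain $A_\xi$ need not be pairwise compatible, so one cannot simply take a simultaneous lower bound. The point is that $|A_\xi|<\kappa$ lets one diagonalize through the antichain \emph{sequentially} inside $\mathbb{P}$, absorbing all the decisions into the single condition $p_{\xi+1}$ while keeping every intermediate sequence short enough for ${<}\kappa$-closure to apply.
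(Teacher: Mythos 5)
Your overall skeleton (a density argument carried out in $V$, a decreasing $\mathbb{P}$-sequence of length $\alpha$, and maximal antichains $A_\xi\subseteq\mathbb{Q}$ with value functions $F_\xi$ that serve as a $\mathbb{Q}$-name for $f$) is the right one; this is the standard proof of Easton's lemma, which the paper itself does not prove but cites from Jech. However, your successor step has a genuine gap: you first fix a maximal antichain $A_\xi$ below $q_0$ and then, for each $q_i\in A_\xi$, ask for $p'\le p^i$ in $\mathbb{P}$ such that $(p',q_i)$ decides $\dot f(\xi)$ \emph{with $q_i$ held fixed}. Such a $p'$ need not exist, because deciding $\dot f(\xi)$ may require strengthening the $\mathbb{Q}$-coordinate as well. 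Concretely, if on part of the forcing $\dot f(\xi)$ is a nontrivial $\mathbb{Q}$-name --- say $\dot f(\xi)$ is $0$ or $1$ according to whether some fixed $q^*\in\mathbb{Q}$ lies in $H$ --- then for any $q_i$ compatible with $q^*$ but not deciding it, no condition of the form $(p',q_i)$ decides $\dot f(\xi)$, since the $\mathbb{Q}$-side can still be extended either way. The obstacle you single out at the end (pairwise incompatibility of the deciding $\mathbb{P}$-conditions, handled by sequential processing) is real but secondary; the fatal issue is that the required deciding conditions may simply not exist once the antichain is frozen in advance.

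The repair --- and the way the standard proof runs --- is to build $A_\xi$ and the auxiliary $\mathbb{P}$-sequence \emph{simultaneously}: at stage $i$, if $\{q_j : j<i\}$ is not yet maximal below $q_0$, choose $\bar q\le q_0$ incompatible with every $q_j$, then use density of deciding conditions in the product to extend in both coordinates, obtaining $(p^{i+1},q_i)\le(p^i,\bar q)$ which decides $\dot f(\xi)$; since $q_i\le\bar q$, the new $q_i$ remains incompatible with the earlier $q_j$, so $\{q_j : j\le i\}$ is still an antichain. The $\kappa$-c.c.\ of $\mathbb{Q}$ forces this recursion to halt at some stage $\mu_\xi<\kappa$ (otherwise it would produce an antichain of size $\kappa$), and the ${<}\kappa$-closure of $\mathbb{P}$ supplies lower bounds at limit stages of the inner recursion, at its end, and at limit stages of the outer recursion, exactly as you describe. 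With this change the rest of your argument --- reading off $\tilde f$ from $\langle(A_\xi,F_\xi) : \xi<\alpha\rangle$ inside $V[H]$ and verifying that $(p_\alpha,q_0)$ forces $\dot f=\tilde f$ --- goes through verbatim.
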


\begin{theorem}[\cite{Lav78}]
Suppose $\kappa$ is supercompact. Then there is a forcing $\mathbb{P}$ such that in $V^{\mathbb{P}}$, $\kappa$ is indestructible by any ${<}\kappa$-directed closed forcing. In other words, $\kappa$ is supercompact and remains supercompact after any ${<}\kappa$-directed closed forcing.
\end{theorem}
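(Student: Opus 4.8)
The plan is to prove this via Laver's method of a \emph{guessing function} together with a reverse Easton iteration, the argument underlying \cite{Lav78}. The main tool is a \emph{Laver function} $\ell : \kappa \to V_\kappa$ with the property that for every set $x$ and every cardinal $\lambda \geq |\mathrm{TC}(x)|$ there is a $\lambda$-supercompactness embedding $j : V \to M$ with $\crit(j) = \kappa$, $M^\lambda \subseteq M$, and $j(\ell)(\kappa) = x$. Granting such an $\ell$, I would define $\mathbb{P} = \mathbb{P}_\kappa$ to be the length-$\kappa$ Easton-support iteration that, at an inaccessible stage $\alpha$, forces with the poset coded by $\ell(\alpha)$ precisely when $\ell(\alpha)$ is a $\mathbb{P}_\alpha$-name for a ${<}\alpha$-directed closed forcing, and forces trivially otherwise. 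It is important to impose a \emph{gap condition}: one only activates stage $\alpha$ when the forcing already performed has rank well below the next inaccessible, so that after any nontrivial stage the next one is far away. Assuming $\gch$ throughout (which can be arranged by a preliminary forcing), this guarantees the tail closure used below.

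For indestructibility, let $\mathbb{Q}$ be ${<}\kappa$-directed closed in $V[G]$ for $G$ generic over $\mathbb{P}$, let $g$ be $\mathbb{Q}$-generic, and fix a cardinal $\lambda \geq |\mathrm{TC}(\dot{\mathbb{Q}})|$; it suffices to produce a $\lambda$-supercompactness embedding on $V[G][g]$ with critical point $\kappa$. Using the Laver property I would choose $j : V \to M$ witnessing $\lambda$-supercompactness with $j(\ell)(\kappa) = \dot{\mathbb{Q}}$. Since $\crit(j) = \kappa$ and $j(\ell) \restriction \kappa = \ell$, the iteration $j(\mathbb{P})$ factors in $M$ as $\mathbb{P} * \dot{\mathbb{Q}} * \dot{\mathbb{P}}_{\mathrm{tail}}$, where $\dot{\mathbb{P}}_{\mathrm{tail}}$ is the part of the iteration strictly above stage $\kappa$. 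By the gap condition and the choice of $\lambda$, no nontrivial forcing occurs in $M$ on the interval $(\kappa, \lambda]$, so $\mathbb{P}_{\mathrm{tail}}$ is ${\leq}\lambda$-directed closed in $M[G][g]$.

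The heart of the argument is lifting $j$ through $\mathbb{P} * \dot{\mathbb{Q}}$. First I would build an $M[G][g]$-generic filter $G_{\mathrm{tail}}$ for $\mathbb{P}_{\mathrm{tail}}$ inside $V[G][g]$: because $M$ is an ultrapower by a normal fine measure on $\pkl$, every element of $M$ has the form $j(f)(j''\lambda)$, so under $\gch$ there are only $\lambda^+$-many dense subsets of $\mathbb{P}_{\mathrm{tail}}$ in $M[G][g]$; since $\mathbb{P} * \dot{\mathbb{Q}}$ has size ${\leq}\lambda$ we have $M[G][g]^\lambda \subseteq M[G][g]$, and the ${\leq}\lambda$-closure of the tail lets me meet these dense sets one by one along a decreasing sequence of length $\lambda^+$, taking lower bounds of initial segments inside $M[G][g]$. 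Since conditions in $G$ have support below $\kappa$, their images lie in $G * g * G_{\mathrm{tail}}$, so $j$ lifts to $\bar{j} : V[G] \to M[G][g][G_{\mathrm{tail}}]$. Next I lift through $\mathbb{Q}$: the set $\bar{j}''g$ is a directed subset of $j(\mathbb{Q})$ of size ${\leq}\lambda < j(\kappa)$ lying in the target model, so by ${<}j(\kappa)$-directed closure it has a lower bound $q$, a master condition; building a generic $\tilde{g} \ni q$ for $j(\mathbb{Q})$ by the same counting-and-closure method yields the final lift $j^{+} : V[G][g] \to M[G][g][G_{\mathrm{tail}}][\tilde{g}]$.

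Finally I would check that the target $N = M[G][g][G_{\mathrm{tail}}][\tilde{g}]$ satisfies $N^\lambda \subseteq N$ in $V[G][g]$ and that $j^{+}(\kappa) > \lambda$, so that $j^{+}$ witnesses the $\lambda$-supercompactness of $\kappa$ in $V[G][g]$; as $\lambda$ and $\mathbb{Q}$ were arbitrary, $\kappa$ is supercompact and indestructible there. The supercompactness of $\kappa$ in $V^{\mathbb{P}}$ itself is just the case where $\mathbb{Q}$ is trivial. I expect the main obstacle to be the construction of $G_{\mathrm{tail}}$ (and $\tilde{g}$): it requires the delicate interplay between the ${\leq}\lambda$-closure of the tail forcing, the bound of $\lambda^+$ on the number of relevant dense sets coming from $M$ being an ultrapower under $\gch$, and the $\lambda$-closure of the target model, all of which hinge on the gap condition built into the definition of $\mathbb{P}$.
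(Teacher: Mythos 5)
The paper never proves this statement: it is quoted from \cite{Lav78} and used as a black box, so the only proof to compare yours against is Laver's own, and your plan is essentially a reconstruction of it (Laver function, Easton-support preparation with a gap device, the factorization $j(\mathbb{P})=\mathbb{P}*\dot{\mathbb{Q}}*\dot{\mathbb{P}}_{\mathrm{tail}}$, an externally built tail generic, a master condition for $j(\mathbb{Q})$, closure of the target model). Two repairs are needed before it is airtight. First, you grant yourself the Laver function; its existence is the actual core of \cite{Lav78} (the lifting machinery was already standard), so as written you prove ``Laver diamond implies indestructibility'' and must still cite or prove the diamond. Second, your gap condition is too weak as literally stated: requiring only that previously performed forcing have small \emph{rank} does not yield your later claim that $j(\mathbb{P})$ has no nontrivial stage in $(\kappa,\lambda]$, because the name $\dot{\mathbb{Q}}=j(\ell)(\kappa)$ can have von Neumann rank far below $\lambda$ even when $|\mathrm{TC}(\dot{\mathbb{Q}})|=\lambda$ (e.g.\ rank about $\kappa+2$ for a name of hereditary size $\aleph_{\kappa+1}$). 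Laver's device is to have $\ell$ guess \emph{pairs} $(\dot{\mathbb{Q}},\lambda)$ and to write the ordinal bound explicitly into the iteration's rule; with that change the tail really is ${\leq}\lambda$-directed closed in $M[G][g]$.

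The $\mathrm{GCH}$ detour is where you genuinely diverge from \cite{Lav78}, and it is legitimate but needs care. Laver's theorem has no $\mathrm{GCH}$ hypothesis, and note that the arbitrary $\mathbb{Q}$ (say $\mathrm{Add}(\kappa,\kappa^{++})$) may destroy $\mathrm{GCH}$ in $V[G][g]$; your counting survives only because the dense subsets of $\mathbb{P}_{\mathrm{tail}}$ lying in $M[G][g]$ are counted through their names in $M$, i.e.\ through functions $h\colon \mathcal{P}_{\kappa}(\lambda)\to V$ living in $V$, which is where $\mathrm{GCH}$ (together with $\lambda^{<\kappa}=\lambda$, a hypothesis you omit and should add by enlarging $\lambda$) gives the bound $\lambda^{+}$. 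You should make that localization explicit, and you also owe the standard but nontrivial fact that $\mathrm{GCH}$ can be forced first without killing the supercompactness of $\kappa$. Laver's own argument avoids all of this: one takes $j$ to witness $2^{\lambda^{<\kappa}}$-supercompactness (with the gap bound matching), and instead of constructing a full $M[G][g]$-generic for the tail one decides the $2^{\lambda}$-many statements ``$j''\lambda\in j(\dot{X})$'' along a single descending chain below the master condition, reading the normal measure on $\mathcal{P}_{\kappa}(\lambda)$ off the decisions. Your version buys a conceptually simpler lifting (an honest lifted embedding) at the price of a preliminary forcing and an extra preservation theorem; Laver's buys the theorem exactly as stated.
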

\subsection{Gitik's construction} \label{sgc}
In this subsection, let us recall some definitions and results in \cite{Git2020}.

Suppose that $T$ is a subtree of ${}^{<\theta}2$. This means that for every $t\in T$ and $\alpha<\dom(t)$, $t\restriction\alpha\in T$,
and for all $s,t\in T$, $s<_Tt$ iff $s\subseteq t$. For any $t\in T$, we denote by $\hei(t)$ the domain of $t$ and by $\Lev_{\alpha}(T)$ the $\alpha^{\text{th}}$ level of $T$, i.e., $\Lev_{\alpha}(T)=\{ t\in T \ | \ \hei(t)=\alpha \}$.
Let $\hei(T)$ denote the height of $T$.
\begin{definition}
Suppose $T \subseteq {}^{<\theta}2$ is a tree of height $\theta$, and $\delta<\theta$ is a regular cardinal.
\begin{enumerate}
  \item $T$ is a $\theta$-\emph{Suslin tree} if every maximal antichain of $T$ has size less than $\theta$.
  \item $T$ is \emph{normal} if for any $t \in T$ and any $\hei(t)<\alpha<\theta$, there is an $s >_T t$ with $\hei(s)=\alpha$.
  \item $T$ is \emph{homogeneous} if $T_{s_0}=T_{s_1}$ for every $s_0,s_1 \in T$ in the same level, where $T_s=\{t \upharpoonright (\theta \setminus |s|) \ | \ t \in T, t \geq_T s \}$ for every $s \in T$.
  \item $T$ has a $\delta$-\emph{ascent path} if there exists a function sequence $\vec{f}=\langle f_{\alpha} \ | \ \alpha<\theta, f_{\alpha}:\delta \rightarrow \mathrm{Lev}_{\alpha}(T) \rangle$, such that for every $\alpha<\beta<\theta$, the set $\{\upsilon<\delta \ | \ f_{\alpha}(\upsilon)<_T f_{\beta}(\upsilon)\}$ is co-bounded in $\delta$.
\end{enumerate}
\end{definition}

\begin{definition}[\cite{Git2020}]
Suppose $\kappa$ is a 2-Mahlo cardinal, and $\delta<\kappa$ is a regular cardinal.
Define the forcing notion $Q_{\kappa,\delta}$ as follows:
$\langle T,\vec{f} \rangle \in Q_{\kappa,\delta}$ if
\begin{enumerate}
  \item $T \subseteq {^{<\kappa}}2$ is a normal homogeneous tree of a successor height.
  \item $\vec{f}$ is a $\delta$-ascent path through $T$.
\end{enumerate}
The order on $Q_{\kappa,\delta}$ is defined by taking end extensions.

For any $Q_{\kappa,\delta}$-generic filter $G$, we denote $\vec{f}^G=\bigcup_{\langle t, \vec{f'} \rangle \in G}\vec{f'}$ by $\langle f^G_{\alpha} \mid \alpha<\kappa \rangle$, and let $T(G)$ be the $\kappa$-tree added by $G$.
\end{definition}

\begin{definition}[\cite{Git2020}]
Suppose $G$ is a $Q_{\kappa,\delta}$-generic filter over $V$.
In $V[G]$, define the forcing $F_{\kappa,\delta}$ associated with $G$,
where $g \in F_{\kappa,\delta}$ if there is a
$\xi_g<\delta$ such that
$g=f^G_{\alpha}\upharpoonright (\delta \setminus \xi_g)$ for some $\alpha<\kappa$. Set $g_0 \leq_{F_{\kappa,\delta}} g_1$ if and only if $\xi_{g_0}=\xi_{g_1}$ and for every
$\upsilon$ with $\xi_{g_0} \leq \upsilon<\delta$,
$g_0(\upsilon) \geq_{T(G)} g_1(\upsilon)$.
We also view $F_{\kappa,\delta}$ as a set of pairs $g=\langle f^G_{\alpha},\xi_g \rangle$.
\end{definition}

\begin{fact}[\cite{Git2020}]\label{factsus}
Suppose $\kappa$ is a $2$-Mahlo cardinal, and $\delta<\kappa$ is a regular cardinal.
Let $G$ be $Q_{\kappa,\delta}$-generic over $V$. Then the following holds:
\begin{enumerate}
\item\label{ite00} $Q_{\kappa,\delta}$ is ${<}\kappa$-strategically closed and ${<}\delta$-directed closed.
\item \label{ite1} $T(G)$ is a $\delta$-ascent $\kappa$-Suslin tree.
\item\label{ite01} In $V[G]$, $\langle F_{\kappa,\delta},\leq_{F_{\kappa,\delta}} \rangle$ satisfies the $\kappa$-c.c..
\item\label{it1} $\mathrm{Add}(\kappa,1)$ is forcing equivalent to the two-step iteration $Q_{\kappa,\delta}*\dot{F}_{\kappa,\delta}$.
\end{enumerate}
\end{fact}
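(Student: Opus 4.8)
The plan is to prove the four items in the order (1), (4), (2), (3), since the closure properties feed the later arguments; I note at the outset that a $2$-Mahlo cardinal is inaccessible, so $2^{<\kappa}=\kappa$ and every forcing in sight has size $\kappa$. For item (1) I would first observe that in $Q_{\kappa,\delta}$ any two compatible conditions are comparable: a common lower bound end-extends both trees, so the trees are initial segments one of the other, and since a condition's ascent path is determined on the levels of its tree, the ascent paths agree as well. Hence a directed set of fewer than $\delta$ conditions is in fact a ${\leq}$-chain $\langle \langle T_i,\vec f_i\rangle \mid i<\mu\rangle$ with $\mu<\delta$. To find a lower bound I would take $T=\bigcup_i T_i$ of limit height $\eta=\sup_i\hei(T_i)$, note $\mathrm{cf}(\eta)\le\mu<\delta$, fix a cofinal sequence in $\eta$, and intersect the fewer-than-$\delta$ many co-bounded sets witnessing the ascent-path condition along that sequence; as $\delta$ is regular this intersection is co-bounded in $\delta$, and for each $\upsilon$ in it the nodes $f_\alpha(\upsilon)$ cohere into a branch $b_\upsilon$ of length $\eta$. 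Topping $T$ off with a level consisting of the $b_\upsilon$ together with enough further branches to keep the tree normal and homogeneous, and extending the ascent path by $\upsilon\mapsto b_\upsilon$, yields a lower bound, so $Q_{\kappa,\delta}$ is ${<}\delta$-directed closed. For ${<}\kappa$-strategic closure the same topping-off must succeed at limit stages of arbitrarily large cofinality, where intersecting $\geq\delta$ co-bounded sets no longer works; the point is that Player Even can follow a strategy maintaining, alongside each condition, an explicit thread $b_\upsilon$ for every $\upsilon<\delta$ and extending all of them at each of her (cofinally many) moves, so that at a limit stage these threads are already cofinal and coherent and she simply tops off with them.

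For item (4) I would show that $Q_{\kappa,\delta}*\dot F_{\kappa,\delta}$ is forcing equivalent to $\add(\kappa,1)$ via the standard characterization: up to a dense subset it is a separative, ${<}\kappa$-closed forcing of size $\kappa$ in which every condition has $\kappa$ pairwise incompatible extensions, and any such forcing (under $2^{<\kappa}=\kappa$) is isomorphic to $\add(\kappa,1)$. The one genuinely new point is \emph{full} ${<}\kappa$-closure of the iteration, even though the first factor is only strategically closed: given a descending ${<}\kappa$-sequence of conditions, the $\dot F$-coordinates designate, for each $\upsilon$, a decreasing sequence of tree-nodes, and these threads pin down exactly which branches to use when topping off the limit tree produced by the $Q$-coordinates. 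Thus the branches that strategic closure had to avoid are supplied by the thread and a genuine lower bound exists, while separativity and $\kappa$-splitting come from the freedom to extend the tree by incompatible one-step choices.

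Item (2) is the main obstacle. That $T(G)$ has height $\kappa$ and carries the generic $\delta$-ascent path $\vec f^G$ is immediate from genericity together with the density of conditions of large height, which item (1) guarantees. The content is Suslinness: every maximal antichain has size ${<}\kappa$. Here I would use that, by ${<}\kappa$-strategic closure, no new short branches are added, so one can build a descending sequence of conditions deciding longer and longer initial segments of a name $\dot A$ for a maximal antichain; then, using that $\kappa$ is $2$-Mahlo, I would reflect to a stationary set of (Mahlo, hence inaccessible) $\alpha<\kappa$ at which the part of $\dot A$ decided below $\alpha$ is already maximal in $\Lev_{<\alpha}(T(G))$, and invoke homogeneity of the generic tree to propagate this maximality to all of $T(G)$, bounding $\dot A$ below $\kappa$. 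The interplay of reflection and homogeneity here is where Gitik's construction does its real work, and it is the step I expect to be most delicate.

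Finally, for item (3), working in $V[G]$ I would take a putative antichain of size $\kappa$ in $F_{\kappa,\delta}$ and first thin it, using regularity of $\delta<\kappa$, to a single value of $\xi$; within this piece each condition is a tail $f^G_\alpha\restriction[\xi,\delta)$, from which I read off the node $f^G_\alpha(\xi)\in T(G)$. Since item (2) gives that $T(G)$ is $\kappa$-Suslin and hence has no antichain of size $\kappa$, combining comparabilities of these nodes with the ascent-path domination (again exploiting normality and homogeneity to handle the finitely-or-boundedly-many recalcitrant coordinates) produces a compatible pair among the $\kappa$ conditions, contradicting that the set is an antichain. This yields the $\kappa$-c.c. for $F_{\kappa,\delta}$.
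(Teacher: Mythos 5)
You should first note what the paper actually proves here: for items (1)--(3) it simply cites \cite{Git2020}, and the only argument it supplies is for item (4), namely that $Q_{\kappa,\delta}*\dot F_{\kappa,\delta}$ has a ${<}\kappa$-closed dense subset of size $\kappa$ and is therefore equivalent to $\add(\kappa,1)$. Your item (1) is essentially correct and is the standard argument (compatible conditions are comparable, so small directed sets are chains; Even maintains threads at all $\delta$ coordinates). Your item (4) follows the same strategy as the paper, but it is under-specified at the decisive point: the dense set cannot be merely ``conditions whose $\dot F$-coordinate is decided''; it must be the paper's $R_0$, consisting of triples $\langle t,\vec f,g\rangle$ in which $g=\langle f_{\mathrm{ht}(t)-1},\xi_g\rangle$ is the \emph{top-level} function of $t$. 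If the decided $F$-part is allowed to sit at a lower level, then along a descending sequence the trees can outgrow the designated nodes, the threads stop short of the top of the union tree, and that tree need not have cofinal branches through every node, nor any way to continue the ascent path co-boundedly above every old level --- exactly the failure that makes $Q_{\kappa,\delta}$ strategically closed but not closed. Pinning $g$ to the top level is what makes the threads cofinal, and one must then also prove $R_0$ is dense (decide $\dot g=\langle f_\alpha,\xi\rangle$, end-extend the tree so its new top level dominates $f_\alpha$ on $[\xi,\delta)$, and reset $g$ to that top function). These are fixable omissions.

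The genuine gaps are items (2) and (3), which you sketch but do not prove (the paper does not prove them either; they are Gitik's theorems). For (2), ``invoke homogeneity of the generic tree to propagate this maximality'' is not an argument, and homogeneity in fact works against you: to seal a reflected maximal antichain $A_\alpha$ at an inaccessible level $\alpha$ one wants to add only cofinal branches passing above elements of $A_\alpha$, but homogeneity forces the set of added branches to be invariant under the level-preserving translations, and the translate of a branch meeting $A_\alpha$ need not meet $A_\alpha$; moreover the $\delta$ many ascent-path threads must themselves be steered above elements of $A_\alpha$ during the construction. Reconciling sealing with homogeneity, normality, and the ascent path is precisely where Gitik's proof does its work (and where the $2$-Mahlo hypothesis enters), and that reconciliation is absent from your sketch. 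For (3) the plan breaks concretely: after thinning to a single $\xi$, comparability of the nodes $f^G_\alpha(\xi)$ at the one coordinate $\xi$ says nothing about the coordinates in $(\xi,\theta)$ below the co-boundedness threshold, and these cannot be ``handled by normality and homogeneity'' --- the rows of the ascent path are fixed objects of $V[G]$, and compatibility demands a single row dominating both given rows everywhere on $[\xi,\delta)$. Extracting such a pair from $\kappa$ many conditions by a coloring argument would require something like $\kappa\rightarrow(\kappa)^2_\delta$, i.e.\ weak compactness, which $\kappa$ fails in $V[G]$ (by (2) it carries a $\kappa$-Aronszajn tree). The $\kappa$-c.c.\ of $F_{\kappa,\delta}$ has to come from a sealing/reflection argument over $Q_{\kappa,\delta}$ of the same kind as (2), not be read off pointwise from Suslinness.
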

\begin{proof}
The proofs of \eqref{ite00}, \eqref{ite1} and \eqref{ite01} may be found in \cite{Git2020}.
For the sake of completeness, we provide the proof of \eqref{it1}.

Since every ${<}\kappa$-closed poset of size $\kappa$ is forcing equivalent to $\mathrm{Add}(\kappa,1)$,
we only need to prove that $Q_{\kappa,\delta}*\dot{F}_{\kappa,\delta}$ has a ${<}\kappa$-closed dense subset of size $\kappa$.
Consider the poset $R_0$,
where $\langle t,\vec{f},g \rangle \in R_0$ iff $\langle t,\vec{f} \rangle \in Q_{\kappa,\delta}$,
and there is a $\xi_g<\delta$ such that $g=\langle f_{\mathrm{ht}(t)-1}, \xi_g \rangle$.
For every $\langle t^1,\vec{f}^1,g^1 \rangle, \langle t^2,\vec{f}^2,g^2 \rangle \in R_0$,
$\langle t^1,\vec{f}^1,g^1 \rangle \leq_{R_0} \langle t^2,\vec{f}^2,g^2 \rangle$
iff $\langle t^1,\vec{f}^1 \rangle \leq_{Q_{\kappa,\delta}} \langle t^2,\vec{f}^2 \rangle$,
$\xi_{g^1}=\xi_{g^2}$ and $g^1(\upsilon)>_{t^1} g^2(\upsilon)$ for every $\upsilon$ with $\xi_{g^1} \leq \upsilon<\delta$.
We may view $R_0$ as a subset of $Q_{\kappa,\delta}*\dot{F}_{\kappa,\delta}$.
For every $\langle t^1,\vec{f}^1, \dot{g} \rangle \in Q_{\kappa,\delta}*\dot{F}_{\kappa,\delta}$,
strengthen $\langle t^1,\vec{f}^1 \rangle$ to a condition $\langle t,\vec{f} \rangle$,
such that $\langle t,\vec{f} \rangle$ decides that $\dot{g}$ is $g=\langle f_{\alpha},\xi_g \rangle$ with $\alpha<\hei(t)$.
We may extend $\langle t,\vec{f} \rangle$ to a condition $\langle t',\vec{f}' \rangle$
so that $f'_{\mathrm{ht}(t')-1}(\upsilon) \geq_{t'} f_{\alpha}(\upsilon)$ for every $\upsilon$ with $\xi_{g} \leq \upsilon<\delta$.
Let $g'=\langle f'_{\mathrm{ht}(t')-1}, \xi_{g} \rangle$.
Then $\langle t',\vec{f}',g' \rangle$ extends $\langle t^1,\vec{f}^1, \dot{g} \rangle$.
Thus $R_0$ is dense in $Q_{\kappa,\delta}*\dot{F}_{\kappa,\delta}$.

Now we prove that $R_0$ is ${<}\kappa$-closed.
Take any limit $\gamma<\kappa$ and any $\gamma$-downward sequence $\langle \langle t^{\alpha},\vec{f}^{\alpha},g^{\alpha} \rangle \ | \ \alpha<\gamma \rangle$ of conditions in $R_0$.
We have $\xi_{g^{\alpha}}=\xi_{g^{0}}$ for every $\alpha<\gamma$,
and for every $\alpha_0<\alpha_1<\gamma$, $g^{\alpha_1}(\upsilon)>_{t^{\alpha_1}} g^{\alpha_0}(\upsilon)$ for every $\upsilon$ with $\xi_{g^0} \leq \upsilon<\delta$.
Thus $t=\bigcup_{i<\gamma}t^{i}$ has a cofinal branch.
So we may extend $t$ by adding all cofinal branches of $t$ to the level $\hei(t)$
and get a tree $t^{\gamma}$.
Let $\vec{f}=\bigcup_{\alpha<\gamma}\vec{f}^{\alpha}:=\langle f_{\alpha} \ | \ \alpha<\mathrm{ht}(t) \rangle$,
and let $f_{\mathrm{ht}(t)}:\delta \rightarrow \mathrm{Lev}_{\mathrm{ht}(t)}(t^{\gamma})$,
so that for every $\upsilon$ with $\xi_{g^0}\leq \upsilon<\delta$,
$f_{\mathrm{ht}(t)}(\upsilon)$ is the continuation of $\bigcup_{\alpha<\mathrm{ht}(t)}f_{\alpha}(\upsilon)$.
Let $\vec{f}^{\gamma}=\vec{f}\cup \{\langle \mathrm{ht}(t), f_{\mathrm{ht}(t)} \rangle\}$,
and let $g^{\gamma}=\langle f_{\mathrm{ht}(t)}, \xi_{g^0} \rangle$.
Then $\langle t^{\gamma},\vec{f}^{\gamma},g^{\gamma} \rangle$
extends $\langle t^{\alpha},\vec{f}^{\alpha},g^{\alpha}\rangle$ for every $\alpha<\gamma$.
\end{proof}
In the above fact,
if $\kappa$ is supercompact and $\delta<\kappa$ is measurable,
then $Q_{\kappa,\delta}$ may preserve the $\delta$-strong compactness of $\kappa$ after some preparation.
The idea is that after a small ultrapower map $i_U:V\rightarrow M_U$ given by a normal measure $U$ over $\delta$,
we may lift $i_U$ to $i_U:V[G] \rightarrow M_U[G^*]$ for some $G^*$ by transfer argument.
Then $i_U''\vec{f}^G$ may generate an $i_U(F_{\kappa,\delta})$-generic object associated with $i_U(T(G))$ over $M_U[G^*]$,
which may resurrect the supercompactness of $\kappa$.

It is well-known that if there is a $\kappa$-Suslin tree, then $\kappa$ is not measurable.
We give a similar result for a $\delta$-ascent $\kappa$-Suslin tree (actually, a $\delta$-ascent $\kappa$-Aronszajn tree is enough).
\begin{lemma}\label{lemma1}
Let $\kappa>\delta$ be two regular cardinal.
If $T$ is a $\delta$-ascent $\kappa$-Suslin tree, then $\kappa$ carries no $\delta^+$-complete uniform ultrafilters.
In particular, $\kappa$ is not $\delta^+$-strongly compact.
\end{lemma}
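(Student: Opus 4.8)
The plan is to argue by contradiction: assuming $\kappa$ carries a $\delta^{+}$-complete uniform ultrafilter $U$, I will extract from the ascent path a cofinal branch through $T$, which is impossible because a $\kappa$-Suslin tree (being normal, or, as the parenthetical remark indicates, merely Aronszajn) has no cofinal branch. Let $j\colon V\to M=\ult(V,U)$ be the ultrapower embedding. Since $U$ is $\delta^{+}$-complete, $\crit(j)\geq\delta^{+}>\delta$, so $j\restriction\delta$ is the identity and $j(\delta)=\delta$; since $U$ is uniform, every bounded subset of $\kappa$ lies outside $U$, so $\eta:=[\mathrm{id}]_{U}$ satisfies $j(\alpha)<\eta$ for all $\alpha<\kappa$ and $\eta<j(\kappa)$, i.e. $\sup j''\kappa\leq\eta<j(\kappa)$.

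By elementarity $j(\vec f)=\langle g_{\beta}\mid\beta<j(\kappa)\rangle$ is a $\delta$-ascent path through the $j(\kappa)$-tree $j(T)$ (as $j(\delta)=\delta$, the width stays $\delta$), and for each $\alpha<\kappa$ we have $g_{j(\alpha)}=j(f_{\alpha})$, hence $g_{j(\alpha)}(v)=j(f_{\alpha}(v))$ for every $v<\delta$ because $j$ fixes $v$. Fix $\alpha<\kappa$. Applying the ascent property in $M$ to the levels $j(\alpha)<\eta$, the set $\{v<\delta\mid g_{j(\alpha)}(v)<_{j(T)}g_{\eta}(v)\}$ is co-bounded in $\delta$; for such $v$ the unique predecessor of $g_{\eta}(v)$ at level $j(\alpha)$ is $g_{j(\alpha)}(v)=j(f_{\alpha}(v))$, that is $g_{\eta}(v)\restriction j(\alpha)=j(f_{\alpha}(v))$. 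Let $c(\alpha)<\delta$ be a bound witnessing this co-boundedness. Because $\kappa$ is regular and $\delta<\kappa$, there is $v^{*}<\delta$ such that $A:=\{\alpha<\kappa\mid c(\alpha)\leq v^{*}\}$ is cofinal in $\kappa$ (otherwise $\kappa$ would be a union of $\delta<\kappa$ sets each of size $<\kappa$, contradicting regularity).

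Now set $B=\{s\in T\mid j(s)<_{j(T)}g_{\eta}(v^{*})\}$, a subset of $T$ lying in $V$. Since $j$ preserves and reflects $<_{T}$, any two members of $B$ are comparable and $B$ is downward closed, so $B$ is a chain. For each $\alpha\in A$ we have $c(\alpha)\leq v^{*}$, whence $j(f_{\alpha}(v^{*}))=g_{\eta}(v^{*})\restriction j(\alpha)<_{j(T)}g_{\eta}(v^{*})$, so $f_{\alpha}(v^{*})\in B\cap\Lev_{\alpha}(T)$. As $A$ is cofinal in $\kappa$, $B$ meets cofinally many levels, so it is a cofinal branch of $T$, contradicting that $T$ is Aronszajn. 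For the final clause, if $\kappa$ were $\delta^{+}$-strongly compact then Theorem \ref{tKU}, applied to the regular cardinal $\lambda=\kappa$, would produce exactly such a $\delta^{+}$-complete uniform ultrafilter on $\kappa$; hence $\kappa$ is not $\delta^{+}$-strongly compact.

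The main obstacle is the passage from ``for each level $\alpha$, co-boundedly many threads $v$ trace a predecessor of $g_{\eta}(v)$ coming from $T$'' to ``a single thread $v^{*}$ traces such predecessors at cofinally many levels at once.'' This is exactly where the regularity of $\kappa$ together with $\delta<\kappa$ (pigeonhole on $c\colon\kappa\to\delta$) is essential, as is the fact that $j$ fixes the $\delta$ threads so that $g_{j(\alpha)}(v)=j(f_{\alpha}(v))$. Establishing $\sup j''\kappa\leq\eta$ from uniformity is the other point requiring care.
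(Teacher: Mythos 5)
Your proof is correct and follows essentially the same route as the paper's: form the ultrapower by the hypothesized $\delta^+$-complete uniform ultrafilter, apply the ascent-path property in $M$ at a level of $j(T)$ above $\sup j''\kappa$ (the paper uses $\sup j''\kappa$ itself where you use $[\mathrm{id}]_U$), pigeonhole the bounds $c(\alpha)$ using the regularity of $\kappa>\delta$, and pull back along $j$ to produce a cofinal branch of $T$, contradicting Suslinity. The only cosmetic difference is that you define the branch directly as $\{s\in T \mid j(s)<_{j(T)}g_{\eta}(v^*)\}$, whereas the paper first shows the nodes $f_{\alpha}(\theta)$, $\alpha\in A$, form a chain by elementarity and takes its downward closure.
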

\begin{proof}
This follows from \cite[Lemmas 3.7 and 3.38]{LR22}, but we provide here a direct proof.

Suppose not, then there exists a $\delta^+$-complete uniform ultrafilter over $\kappa$.
So we have an ultrapower map $j:V \rightarrow M$ such that $\mathrm{crit}(j)>\delta$ and $\sup(j''\kappa)<j(\kappa)$.
Let $\beta=\sup(j''\kappa)$.
Let $\vec{f}=\langle f_{\alpha} \ | \ \alpha<\kappa \rangle$ be a $\delta$-ascent path through $T$,
and denote $j(\vec{f})$ by $\langle f^*_{\alpha} \ | \ \alpha<j(\kappa) \rangle$.
By elementarity and the fact that $\mathrm{crit}(j)>\delta$, it follows that
for any $\alpha_1<\alpha_2<j(\kappa)$,
the set $\{ \upsilon<\delta \ | \ f_{\alpha_1}^*(\upsilon)<_{j(T)} f^*_{\alpha_2}(\upsilon) \}$ is co-bounded in $\delta$.
So for every $\alpha<\kappa$, there is a $\theta_{\alpha}<\delta$ such that
$f^*_{\beta}(\upsilon)>_{j(T)}f^*_{j(\alpha)}(\upsilon)=j(f_{\alpha}(\upsilon))$
for every $\upsilon$ with $\theta_{\alpha} \leq \upsilon<\delta$.
Since $\mathrm{cof}(\beta)=\kappa>\delta$, it follows that
there is an unbounded subset $A \subseteq \kappa$ and a $\theta<\delta$ such that
for each $\alpha \in A$, we have $\theta=\theta_{\alpha}$.
Hence for every $\alpha_1>\alpha_2$ in $A$,
we have $j(f_{\alpha_1}(\upsilon))>_{j(T)}j(f_{\alpha_2}(\upsilon))$ for every $\upsilon$ with $\theta \leq \upsilon<\delta$,
and thus $f_{\alpha_1}(\upsilon)>_{T}f_{\alpha_2}(\upsilon)$ for every $\upsilon$ with $\theta \leq \upsilon<\delta$ by elementarity.
Therefore, $\{s \in T \mid \exists \alpha \in A (s \leq_{T} f_{\alpha}(\theta)) \}$ is a cofinal branch through $T$,
contrary to the fact that $T$ is a $\kappa$-Suslin tree.

Thus $\kappa$ carries no $\delta^+$-complete uniform ultrafilter. This means that
$\kappa$ is not $\delta^+$-strongly compact by Theorem \ref{tKU}.
\end{proof}
By Fact \ref{factsus}, if $\kappa$ is a $2$-Mahlo cardinal, then $Q_{\kappa,\delta}$ adds a $\delta$-ascent $\kappa$-Suslin tree. So $\kappa$ is not $\delta^+$-strongly compact in $V^{Q_{\kappa,\delta}}$.
\begin{corollary}\label{kill}
Suppose $\kappa$ is a $2$-Mahlo cardinal, and $\delta<\kappa$ is a regular cardinal.
Then every cardinal less than or equal to $\kappa$ is not $\delta^+$-strongly compact in $V^{Q_{\kappa,\delta}}$.
\end{corollary}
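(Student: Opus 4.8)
The plan is to reduce the statement for an arbitrary cardinal $\mu\leq\kappa$ to the single cardinal $\kappa$, for which Lemma \ref{lemma1} already does the work. The two ingredients are the Suslin tree produced by the forcing and Usuba's ultrafilter characterization of $\delta^+$-strong compactness in Theorem \ref{tKU}.

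First I would check that $\kappa$ remains a regular cardinal in $V^{Q_{\kappa,\delta}}$. This is immediate from Fact \ref{factsus}\eqref{ite00}: since $Q_{\kappa,\delta}$ is ${<}\kappa$-strategically closed, it is ${<}\kappa$-distributive, so it adds no new ${<}\kappa$-sequences of ordinals and hence cannot change the cofinality of $\kappa$. In particular $\delta<\kappa$ stays regular as well. Next, by Fact \ref{factsus}\eqref{ite1} the generic object $T(G)$ is a $\delta$-ascent $\kappa$-Suslin tree in $V^{Q_{\kappa,\delta}}$, so Lemma \ref{lemma1} (applicable because $\kappa>\delta$ are both regular there) gives that $\kappa$ carries no $\delta^+$-complete uniform ultrafilter in $V^{Q_{\kappa,\delta}}$.

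The remaining step is to see that $\delta^+$-strong compactness of any $\mu\leq\kappa$ would manufacture exactly such an ultrafilter. Suppose toward a contradiction that some cardinal $\mu\leq\kappa$ is $\delta^+$-strongly compact in $V^{Q_{\kappa,\delta}}$. By Definition \ref{dscc} this forces $\mu\geq\delta^+$, so Theorem \ref{tKU} applies with $\delta^+$ in the role of the smaller cardinal: for every regular $\lambda\geq\mu$ there is a $\delta^+$-complete uniform ultrafilter over $\lambda$. Since $\kappa$ is regular and $\kappa\geq\mu$, I may take $\lambda=\kappa$ and obtain a $\delta^+$-complete uniform ultrafilter over $\kappa$, contradicting the previous paragraph. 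Cardinals $\mu<\delta^+$ are not $\delta^+$-strongly compact for the trivial reason that the notion requires $\mu\geq\delta^+$.

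I do not expect a genuine obstacle here; the argument is a short chain of implications. The only points demanding a little care are the preservation of the regularity of $\kappa$ (so that $\lambda=\kappa$ is a legitimate choice in Theorem \ref{tKU}) and the correct bookkeeping that $\delta^+$-strong compactness of a possibly \emph{smaller} cardinal $\mu$ still yields an ultrafilter on $\kappa$ itself. This last point is precisely where Usuba's characterization, rather than the raw embedding definition, does the heavy lifting: it converts compactness of $\mu$ into a uniform ultrafilter on every regular $\lambda\geq\mu$, and the Suslin tree of Fact \ref{factsus}\eqref{ite1} obstructs exactly the instance $\lambda=\kappa$.
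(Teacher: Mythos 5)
Your proposal is correct and follows essentially the same route as the paper: Fact \ref{factsus}\eqref{ite1} provides the $\delta$-ascent $\kappa$-Suslin tree in $V^{Q_{\kappa,\delta}}$, Lemma \ref{lemma1} then rules out $\delta^+$-complete uniform ultrafilters over $\kappa$, and Theorem \ref{tKU} converts the hypothetical $\delta^+$-strong compactness of any $\mu\leq\kappa$ into exactly such an ultrafilter (taking $\lambda=\kappa$), yielding the contradiction. Your extra checks --- that $\kappa$ stays regular by ${<}\kappa$-strategic closure and that cardinals below $\delta^+$ are excluded by definition --- are details the paper leaves implicit, and they are handled correctly.
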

\section{the proofs for one cardinal}\label{sec3}
Gitik in \cite[Theorem 3.9 and the comment below the theorem]{Git2020} gave a new construction of a non-strongly compact $\delta$-strongly compact cardinal.
\begin{theorem}[\cite{Git2020}]
Assume $\gch$. Let $\kappa$ be a supercompact cardinal and let $\delta < \kappa$ be a measurable cardinal.
Then there is a cofinality preserving generic extension which satisfies the following:
\begin{enumerate}
    \item $\gch$,
    \item $\kappa$ is not measurable,
    \item $\kappa$ is the least $\delta$-strongly compact cardinal.
\end{enumerate}
\end{theorem}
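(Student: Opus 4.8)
The plan is to fix, in a suitably prepared ground model, a supercompact $\kappa$ and a measurable $\delta<\kappa$, and to take the final model to be $V[G]$ with $G$ being $Q_{\kappa,\delta}$-generic. Before forcing with $Q_{\kappa,\delta}$ I would run a reverse Easton preparation doing three things: force $\gch$; make the supercompactness of $\kappa$ indestructible under ${<}\kappa$-directed closed forcing via Laver's theorem; and destroy $\delta$-strong compactness at every relevant cardinal in $[\delta,\kappa)$, so that $\kappa$ ends up \emph{least}. For the last point I would, at the appropriate $\mu\in(\delta,\kappa)$, add a $\delta'$-ascent $\mu$-Suslin tree for some fixed $\delta'<\delta$ via $Q_{\mu,\delta'}$; by Lemma \ref{lemma1} this prevents a $\delta'^+$-complete, hence $\delta$-complete, uniform ultrafilter over $\mu$, so by Theorem \ref{tKU} no such $\mu$ is $\delta$-strongly compact. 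Interleaving these killing steps with the Laver iteration while keeping $\kappa$ supercompact and $\gch$ intact is the routine but delicate bookkeeping typical of ``least large cardinal'' arguments.

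Passing to $V[G]$, several clauses are immediate. Since $Q_{\kappa,\delta}$ is ${<}\kappa$-strategically closed (Fact \ref{factsus}\eqref{ite00}) and, as a factor of $\add(\kappa,1)\cong Q_{\kappa,\delta}*\dot F_{\kappa,\delta}$ (Fact \ref{factsus}\eqref{it1}), is $\kappa^+$-c.c., it preserves all cofinalities and, with $\gch$ below, preserves $\gch$; this yields cofinality preservation and clause (1). By Fact \ref{factsus}\eqref{ite1}, $T(G)$ is a $\delta$-ascent $\kappa$-Suslin tree, so Lemma \ref{lemma1} (equivalently Corollary \ref{kill}) shows $\kappa$ carries no $\delta^+$-complete uniform ultrafilter; since a normal measure on $\kappa$ would be $\kappa$-complete, hence $\delta^+$-complete and uniform, $\kappa$ is not measurable, giving clause (2) and simultaneously that $\kappa$ is not $\delta^+$-strongly compact. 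Together with the preparation, the only remaining point is the positive half of clause (3): that $\kappa$ \emph{is} $\delta$-strongly compact in $V[G]$.

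This is the heart of the matter, and I would argue it by resurrecting supercompactness inside an ultrapower. Fix a normal measure $U$ on $\delta$ with ultrapower $i_U\colon V\to M_U$, so $\crit(i_U)=\delta$, $\delta<i_U(\delta)<\kappa$, and $i_U(\kappa)=\kappa$ (the last because $\kappa$ is inaccessible above $\delta$ and $i_U$ is continuous there). Using the closure of $M_U$ under $\delta$-sequences, the ${<}\delta$-directed closure of $Q_{\kappa,\delta}$, the $\delta$-completeness of $U$, and the coherence built into the ascent path, I would carry out a transfer argument to lift $i_U$ to $i_U\colon V[G]\to M_U[G^*]$ with $G^*\in V[G]$ a $i_U(Q_{\kappa,\delta})=Q^{M_U}_{\kappa,i_U(\delta)}$-generic extending $i_U''G$. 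The crucial move is then that the pointwise image $i_U''\vec f^{\,G}$ threads $i_U(T(G))$ coherently and so generates an $i_U(F_{\kappa,\delta})$-generic filter $H^*\in V[G]$ over $M_U[G^*]$; since in $M_U$ we have $i_U(\add(\kappa,1))\cong i_U(Q_{\kappa,\delta})*i_U(\dot F_{\kappa,\delta})$, which is ${<}\kappa$-closed, and the indestructibility of $\kappa$ transfers to $M_U$, this resurrects the \emph{full} supercompactness of $\kappa$ in $M_U[G^*][H^*]$. Now, given a regular $\lambda\geq\kappa$, compose the lift $i_U\colon V[G]\to M_U[G^*]\subseteq M_U[G^*][H^*]$ with an embedding $k$ witnessing that $\kappa$ is $i_U(\lambda)$-supercompact there, to obtain $j=k\circ i_U\colon V[G]\to P$, all definable in $V[G]$. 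Then $\crit(j)=\delta$, and $D=k''i_U(\lambda)\in P$ satisfies $j''\lambda\subseteq D$ and $|D|^P<k(\kappa)=j(\kappa)$, so $j$ witnesses $(\delta,\lambda)$-strong compactness; equivalently, the derived seed ultrafilter is a $\delta$-complete uniform ultrafilter over $\lambda$ in $V[G]$. As $\lambda$ is arbitrary, Theorem \ref{tKU} gives that $\kappa$ is $\delta$-strongly compact.

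The main obstacle is the resurrection step: verifying that the transfer actually produces a $G^*$ genuinely generic over $M_U$, and that $i_U''\vec f^{\,G}$ generates a genuine $i_U(F_{\kappa,\delta})$-generic, rather than merely a filter, over $M_U[G^*]$, meeting every dense set there. This is exactly where the ascent-path coherence, the ${<}\delta$-directed closure from Fact \ref{factsus}\eqref{ite00}, the $\kappa$-c.c.\ of $F_{\kappa,\delta}$ from Fact \ref{factsus}\eqref{ite01}, and Easton's Lemma \ref{clai3} (to keep the relevant generics and the witness inside $V[G]$) must be combined. A secondary, more bureaucratic difficulty is the preparation of the first paragraph, where one must preserve the supercompactness of $\kappa$ across killing steps that are not themselves ${<}\kappa$-directed closed.
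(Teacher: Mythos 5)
Your overall mechanism for the positive half is sound and is essentially the paper's Corollary \ref{t4}: over a model where $\kappa$ is indestructibly supercompact, force with $Q_{\kappa,\delta}$, lift the small ultrapower $i_U$ by the transfer argument, let the pointwise image of the ascent path generate an $i_U(F_{\kappa,\delta})$-generic, and use $Q_{\kappa,\delta}*\dot F_{\kappa,\delta}\cong\add(\kappa,1)$ to resurrect supercompactness on the $M_U$ side. The genuine gap is in the step you wave off as ``routine but delicate bookkeeping'': producing a prepared model $V'$ in which $\kappa$ is still \emph{(indestructibly) supercompact} while unboundedly many $\mu\in(\delta,\kappa)$ already carry $\delta'$-ascent $\mu$-Suslin trees. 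By Fact \ref{factsus}\eqref{ite00} the killing forcings $Q_{\mu,\delta'}$ are only ${<}\delta'$-directed closed (and ${<}\mu$-strategically closed), so their Easton iteration/product over an unbounded set of $\mu<\kappa$ is nowhere near ${<}\kappa$-directed closed; Laver's theorem gives no protection against it, and the standard Laver lifting argument does not survive interleaving such stages, since in the image iteration one must produce generics for tree forcings at stages in $(\kappa,j(\kappa))$ and a stage-$\kappa$ component matching the generic one actually possesses. Note that your resurrection step genuinely needs $V'\vDash``\kappa$ is indestructible$"$ (that is what transfers to $M_U$ via $i_U(\kappa)=\kappa$), so this cannot be postponed or absorbed elsewhere. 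Solving exactly this tension is the content of the paper's Proposition \ref{t1}: the killing locations are chosen definably from strong cardinals (the set $B$), Cohen forcing is interleaved at strong limits of strong cardinals (the set $A$), and Lemmas \ref{lem10} and \ref{lem11} then guarantee that under the composed embedding $\pi=j\circ i_W$ the image iteration has $\add(\kappa,1)$ at stage $\kappa$ (so $G^**F^*$ fits) and no nontrivial stages in $(\kappa,\lambda]$. That design achieves leastness \emph{without ever needing} $\kappa$ to remain supercompact, or even measurable, after the killing --- which is why the paper does not use indestructibility in this proof at all. Your proposal presupposes this hard part rather than proving it.

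A secondary, repairable imprecision is the composition step. Your $k$ has domain $M_U[G^*][H^*]$, which properly extends $M_U[G^*]$, the target of the lifted $i_U$; hence $k\circ i_U$ is not elementary into $P$, but only into the inner model $k(M_U[G^*])$ of $P$, and the covering set must be exhibited \emph{there}. This can be fixed: since $i_U(Q_{\kappa,\delta})*i_U(\dot F_{\kappa,\delta})$ is forcing equivalent to $\add(\kappa,1)^{M_U}$, the models $M_U[G^*]$ and $M_U[G^*][H^*]$ have the same ${<}\kappa$-sequences of ordinals, so $\mathcal{P}_{\kappa}(i_U(\lambda))$ is computed the same in both; by \L{}o\'s, $k''i_U(\lambda)=[\mathrm{id}]$ then lies in $k(M_U[G^*])$ and has size ${<}k(\kappa)$ there. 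Alternatively, derive the $\delta$-complete uniform ultrafilter on $\lambda$ directly from the seed $\sup\bigl((k\circ i_U)''\lambda\bigr)$, which requires no covering set, and invoke Theorem \ref{tKU}. The paper sidesteps this issue entirely by lifting a supercompactness embedding of $M_W$ itself, so that the composed embedding's target is the lift's target and the covering set $j''\lambda$ already lies in $N$.
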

It seems that Gitik only gave a proof for the case that there is no inaccessible cardinal above the supercompact cardinal $\kappa$, though the technique for transferring this proof to the general case is standard.
For the sake of completeness,
we first give some details of the proof of the general case in Proposition \ref{t1}. Then
we give some variants of it.
\begin{proposition}\label{t1}
Suppose $\kappa$ is a supercompact cardinal, and $\delta<\kappa$ is a measurable cardinal.
Then for every regular $\eta<\delta$,
there is a ${<}\eta$-directed closed forcing $\mathbb{P}$,
such that $\kappa$ is the least exactly $\delta$-strongly compact cardinal in $V^{\mathbb{P}}$.
In addition, if $\mathrm{GCH}$ holds in $V$, then it holds in $V^{\mathbb{P}}$.
\end{proposition}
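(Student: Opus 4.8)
The plan is to force with a two-stage iteration $\mathbb{P}=\mathbb{P}_{\mathrm{prep}}*\dot{Q}_{\kappa,\delta}$, where the second stage is Gitik's forcing $Q_{\kappa,\delta}$ from Fact \ref{factsus} and the first is a preparatory reverse Easton iteration (as in Gitik's construction) arranged to make $\kappa$ the least cardinal carrying a $\delta$-complete uniform ultrafilter, to preserve $\gch$, and to be ${<}\eta$-directed closed (e.g. by only acting on the interval $(\eta,\kappa]$). Since $Q_{\kappa,\delta}$ is ${<}\delta$-directed closed by Fact \ref{factsus}\eqref{ite00}, hence ${<}\eta$-directed closed, the whole of $\mathbb{P}$ is ${<}\eta$-directed closed. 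Writing $V'$ for the prepared model and $G$ for the $Q_{\kappa,\delta}$-generic, two of the three requirements on $V'[G]$ are cheap. First, $\kappa$ is \emph{not} $\delta^+$-strongly compact: by Fact \ref{factsus}\eqref{ite1} the generic $G$ adds a $\delta$-ascent $\kappa$-Suslin tree, so Lemma \ref{lemma1} applies (this is exactly Corollary \ref{kill}). Second, $\kappa$ is \emph{least}: any $\mu<\kappa$ that were $\delta$-strongly compact would, failing to be $\delta^+$-strongly compact by Corollary \ref{kill}, be exactly $\delta$-strongly compact and in particular carry a $\delta$-complete uniform ultrafilter, contradicting the preparation. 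The heart of the matter is therefore to preserve the $\delta$-strong compactness of $\kappa$.

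For this, fix a normal measure $U$ on $\delta$ in $V'$ with ultrapower $i_U:V'\to M_U$, so $\crit(i_U)=\delta$, $M_U^{\delta}\subseteq M_U$, $i_U(\delta)>\delta$ and $i_U(\kappa)=\kappa$. The first step is to lift $i_U$ to $i_U:V'[G]\to M_U[G^*]$ for a suitable $G^*$ that is $i_U(Q_{\kappa,\delta})=Q_{\kappa,i_U(\delta)}^{M_U}$-generic over $M_U$ with $i_U''G\subseteq G^*$. Because conditions of $Q_{\kappa,\delta}$ are trees of height ${<}\kappa$, the image $i_U''G$ already determines the first $\kappa$ levels of the generic object, and I would build $G^*$ by a transfer argument, exploiting the ${<}i_U(\delta)$-directed and ${<}\kappa$-strategic closure of $i_U(Q_{\kappa,\delta})$ in $M_U$ (Fact \ref{factsus}\eqref{ite00} by elementarity) together with $M_U^{\delta}\subseteq M_U$ to meet the remaining dense sets. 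By elementarity $i_U(T(G))=T(G^*)$ is then an $i_U(\delta)$-ascent $\kappa$-Suslin tree in $M_U[G^*]$.

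Now comes the resurrection. For $\alpha<\beta<\kappa$ the set $\{\upsilon<\delta\mid f^G_{\alpha}(\upsilon)<_{T(G)}f^G_{\beta}(\upsilon)\}$ is co-bounded in $\delta$, hence lies in $U$, so by the \L o\'s theorem the nodes $b_{\alpha}:=i_U(f^G_{\alpha})(\delta)$ satisfy $b_{\alpha}<_{T(G^*)}b_{\beta}$; as $i_U$ is continuous at $\kappa$, these climb through levels cofinal in $\kappa$. This does not contradict the Suslinness of $T(G^*)$, because the resulting branch is read off from $i_U\restriction\kappa\notin M_U[G^*]$; rather, $i_U''\vec{f}^{G}$ generates a filter $H^*$ which I claim is $i_U(F_{\kappa,\delta})$-generic over $M_U[G^*]$. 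Granting this, Fact \ref{factsus}\eqref{it1} applied in $M_U$ gives $M_U[G^*][H^*]=M_U[K^*]$ with $K^*$ an $\add(\kappa,1)$-generic over $M_U$. Since $\kappa$ is supercompact in $M_U$ by elementarity of $i_U$, and under $\gch$ the forcing $\add(\kappa,1)$ preserves supercompactness by a standard master-condition lift, $\kappa$ is supercompact in $M_U[K^*]$.

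Finally I would descend and pull back. As $i_U(F_{\kappa,\delta})$ is $\kappa$-c.c. over $M_U[G^*]$ by Fact \ref{factsus}\eqref{ite01}, from the supercompactness of $\kappa$ in the $\kappa$-c.c. extension $M_U[K^*]$ one shows, for each regular $\lambda'\geq\kappa$, that the uniform ultrafilter on $\lambda'$ given by supercompactness restricts to an $i_U(\delta)$-complete uniform ultrafilter lying in $M_U[G^*]$; here Easton's Lemma \ref{clai3} and the fact that conditions of $i_U(F_{\kappa,\delta})$ have rank ${<}\kappa$ (so are fixed by the relevant ultrapower embedding) are used to secure membership in $M_U[G^*]$. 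By Theorem \ref{tKU} this gives $M_U[G^*]\models$``$\kappa$ is $i_U(\delta)$-strongly compact'', whence, applying elementarity of $i_U:V'[G]\to M_U[G^*]$ (recall $i_U(\kappa)=\kappa$ and $i_U$ sends the parameter $\delta$ to $i_U(\delta)$), $V'[G]\models$``$\kappa$ is $\delta$-strongly compact''. Preservation of $\gch$ follows from $Q_{\kappa,\delta}$ having size $\kappa$, being ${<}\kappa$-distributive and $\kappa^+$-c.c., and from the design of the preparation. I expect the main obstacle to be precisely the two steps the excerpt glosses over: carrying out the transfer producing $G^*$ with $i_U''G\subseteq G^*$, and verifying that $i_U''\vec f^{G}$ truly generates an $i_U(F_{\kappa,\delta})$-generic $H^*$. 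A secondary obstacle is the descent, where one must check that the restricted ultrafilter is an \emph{element} of $M_U[G^*]$ and is $i_U(\delta)$-complete rather than $\kappa$-complete (which would wrongly resurrect $i_U(\delta)^+$-strong compactness and contradict Corollary \ref{kill} inside $M_U[G^*]$); the tree $T(G^*)$ is exactly what forces the completeness down to $i_U(\delta)$.
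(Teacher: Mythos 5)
Your opening moves do track the paper: the shape preparation${}*Q_{\kappa,\delta}$, killing $\delta^+$-strong compactness via Corollary \ref{kill}, leastness via ascent-path Suslin trees added cofinally below $\kappa$, the transfer lift of $i_U$, the genericity of $H^*$ generated by $i_U''\vec{f}^G$, and the identification $i_U(Q_{\kappa,\delta})*i_U(\dot{F}_{\kappa,\delta})\cong \add(\kappa,1)^{M_U}$ are all present (in essentially the same form) in the paper's proof. The argument breaks at the resurrection step, in two places, each fatal. First, ``$\kappa$ is supercompact in $M_U$ by elementarity of $i_U$'' requires $\kappa$ to be supercompact in the \emph{prepared} model $V'$, and nothing guarantees this: the preparation you need for leastness has nontrivial stages ($Q_{\alpha,\eta}$'s and Cohen forcings) cofinally below $\kappa$ and no stage at $\kappa$, and the standard attempt to lift a supercompactness (even a measurability) embedding $j$ of $V$ through it fails --- by Lemma \ref{lem10} the stage-$\kappa$ forcing of $j(\mathbb{P}_{\mathrm{prep}})$ is $\add(\kappa,1)$, and one cannot manufacture an $\add(\kappa,1)$-generic over $M[G_{\mathrm{prep}}]$ inside $V[G_{\mathrm{prep}}]$, since that forcing is only ${<}\kappa$-closed while there are $\kappa^+$ many dense sets to meet. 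This is precisely why the paper takes $i_W$ to be the ultrapower of the \emph{unprepared} ground model $V$, where $\kappa$ genuinely is supercompact, and then lifts a supercompactness embedding $j$ of $M_W$ (not of $V$, $V'$, or $V[G]$) through $i_W(\mathbb{P}_\kappa)*\add(\kappa,1)$: the Cohen generic $g^**F^*$ that you also construct is exactly the generic demanded at stage $\kappa$ of $\pi(\mathbb{P})=j(i_W(\mathbb{P}))$. Making this match is the entire purpose of the set $A$, of putting Cohen forcing at points of $A$, and of Lemmas \ref{lem10} and \ref{lem11} (which pin down $\pi(\mathbb{P})$ between $\kappa$ and $\lambda$); none of these ingredients appear in your proposal, which is a sign the lift you need has not actually been arranged.

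Second, and independently, the descent cannot work. Suppose $\mathcal{U}\in M_U[K^*]$ is a $\kappa$-complete uniform ultrafilter over $\kappa$ (the case $\lambda'=\kappa$, which Theorem \ref{tKU} forces you to handle), and suppose its restriction $\mathcal{U}\cap M_U[G^*]$ were an element of $M_U[G^*]$. Completeness is inherited downward: for any sequence $\langle X_i \mid i<\gamma\rangle\in M_U[G^*]$, $\gamma<\kappa$, of sets in the restriction, the intersection lies in both $\mathcal{U}$ and $M_U[G^*]$. So the restriction would be a $\kappa$-complete uniform ultrafilter over $\kappa$ in $M_U[G^*]$, making $\kappa$ measurable there and contradicting the existence of the $i_U(\delta)$-ascent $\kappa$-Suslin tree $T(G^*)$ (Lemma \ref{lemma1}). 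Hence the restricted ultrafilters are provably \emph{not} members of $M_U[G^*]$, and no appeal to Easton's lemma or to ranks of conditions can repair this: $i_U(F_{\kappa,\delta})$ adds a cofinal branch through $T(G^*)$, so it genuinely adds subsets of $\kappa$ and is in no sense distributive. Your proposed escape --- that the tree ``forces the completeness down to $i_U(\delta)$'' --- is incoherent for the same reason; what $\kappa$-c.c.\ forcing plus supercompactness upstairs actually yields downstairs is a Kunen-style saturated-ideal conclusion, not ultrafilters. The paper never descends: the drop in completeness from $\kappa$ to $\delta$ is achieved by composing with the $\delta$-ultrapower, so the witnessing embedding $\pi=j\circ i_W$, with $\crit(\pi)=\delta$, lives in $V[G]$ itself; the statement ``$M_U[G^*]\vDash\kappa$ is $i_U(\delta)$-strongly compact'' is then a \emph{consequence} of elementarity, not a stepping stone toward it.
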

\begin{proof}
Let $A=\{ \alpha<\kappa \mid \alpha>\delta$ is a strong cardinal limit of strong cardinals$\}$,
and let $B=\{\alpha<\kappa \mid \alpha>\delta$ is the least inaccessible limit of strong cardinals above some ordinal$\}$.
Then $A$ and $B$ are nonempty by Lemma \ref{lem10},
and $A\cap B=\emptyset$.

Let $\mathbb{P}_{\kappa+1}=\langle \langle \mathbb{P}_{\alpha},\dot{\mathbb{R}}_{\alpha} \rangle \mid \alpha \leq \kappa \rangle$
be the Easton support iteration of length $\kappa+1$,
where $\mathbb{\dot{R}}_{\alpha}$ is
\begin{itemize}
  \item a $\mathbb{P}_{\alpha}$-name for $\mathrm{Add}(\alpha,1)_{V^{\mathbb{P}_{\alpha}}}$ if $\alpha \in A$,
  \item a $\mathbb{P}_{\alpha}$-name for $(Q_{\alpha,\eta})_{V^{\mathbb{P}_{\alpha}}}$ if $\alpha \in B$,
  \item a $\mathbb{P}_{\alpha}$-name for $(Q_{\kappa,\delta})_{V^{\mathbb{P}_{\alpha}}}$ if $\alpha=\kappa$,
  \item the trivial forcing, otherwise.
\end{itemize}
Let $\mathbb{P}=\mathbb{P}_{\kappa+1}$.
Let $G_{\kappa}$ be a $\mathbb{P}_{\kappa}$-generic filter over $V$,
let $g$ be an $\mathbb{R}_{\kappa}$-generic filter over $V[G_{\kappa}]$, and let $G=G_{\kappa}*g \subseteq \mathbb{P}$.
For every $\alpha<\kappa$, let $G_{\alpha}=\{p \upharpoonright \alpha \mid p \in G_{\kappa}\}$,
and let $\dot{\mathbb{P}}_{\alpha,\kappa+1}$ name the canonical iteration of length $\kappa+1-\alpha$ such that $\mathbb{P}$
is forcing equivalent to $\mathbb{P}_{\alpha}*\dot{\mathbb{P}}_{\alpha,\kappa+1}$.
Then $\mathbb{P}$ is ${<}\eta$-directed closed,
because for every $\alpha<\kappa$, $\mathbb{P}_{\alpha}$ forces that $\dot{\mathbb{R}}_{\alpha}$ is ${<}\eta$-directed closed.
In addition, if $\mathrm{GCH}$ holds in $V$, then it holds in $V^{\mathbb{P}}$.

Take any $\alpha \in B$. In $V[G_{\alpha}]$,
the forcing $\mathbb{R}_{\alpha}=Q_{\alpha,\eta}$ adds an $\eta$-ascent $\alpha$-Suslin tree.
So $\alpha$ carries no $\eta^+$-complete uniform ultrafilters in $V[G_{\alpha+1}]$ by Lemma \ref{lemma1}.
Meanwhile, $\mathbb{P}_{\alpha+1,\kappa+1}$ is $(2^\alpha)^+$-strategically closed in $V[G_{\alpha+1}]$.
So any ultrafilter over $\alpha$ in $V[G]$ is actually in $V[G_{\alpha+1}]$.
Hence, there are no $\eta^+$-complete uniform ultrafilters over $\alpha$ in $V[G]$.
This implies that there is no $\eta^+$-strongly compact cardinal below $\alpha$ in $V[G]$ by Theorem \ref{tKU}.
Note that $B$ is unbounded in $\kappa$,
it follows that there is no $\eta^+$-strongly compact cardinal below $\kappa$.

Now we are ready to prove that $\kappa$ is exactly $\delta$-strongly compact.
By Corollary \ref{kill}, the cardinal $\kappa$ is not $\delta^+$-strongly compact.

Let $W$ be a normal measure over $\delta$, and let $i_W:V \rightarrow M_W$ be the corresponding ultrapower map.
Note that $\mathbb{P}$ is ${<}\delta^+$-strategically closed,
we may lift $i_W$ to $i_W:V[G] \rightarrow M_W[G^*]$ by transfer argument.
Here $G^*:=G_{\kappa}^**g^* \subseteq i_W(\mathbb{P})$ is the filter generated by $i_W''G$.

Next, we show that in $V[G]$,
there is an $i_W(F_{\kappa,\delta})$-generic object over $M_W[G^*]$.
In $M_W[G^*]$, the sequence $\vec{f}^{g^*}=\langle f^{g^*}_{\alpha} \mid \alpha<\kappa \rangle:=i_W(\vec{f}^g)$ is an $i_W(\delta)$-ascent path through
the $\kappa$-Suslin tree $T(g^*)$.
And for every $\upsilon$ with $\delta \leq \upsilon<i_W(\delta)$,
$\{f^{g^*}_{i_W(\alpha)}(\upsilon) \mid \alpha<\kappa \}$ generates a cofinal branch $\{ t \in T(g^*) \mid \exists \alpha<\kappa (t<_{T(g^*)} f^{g^*}_{i_W(\alpha)}(\upsilon))\}$ through $T(g^*)$.

We claim that the set $\{ \langle f^{g^*}_{i_W(\alpha)},\delta \rangle \mid \alpha<\kappa \}$ generates an $i_W(F_{\kappa,\delta})$-generic filter $F^*$ over $M_W[G^*]$.
In other words, let
\[
F^*=\{\langle f^{g^*}_{\gamma},\delta \rangle \mid \gamma<\kappa, \exists \gamma<\alpha<\kappa (\forall \delta \leq \upsilon <i_W(\delta)(f^{g^*}_{\gamma}(\upsilon)<_{T(g^*)}f^{g^*}_{i_W(\alpha)}(\upsilon)))\}.
\]
Then $F^*$ is an $M_W[G^*]$-generic set for the forcing
$i_W(F_{\kappa,\delta})$ associated with the tree $T(g^*)$.
We only need to prove that for every maximal antichain $A$ of $i_W(F_{\kappa,\delta})$,
there is an element of $F^*$ extends some element of $A$.
In $M_W[G^*]$,
note that $i_W(F_{\kappa,\delta})$ has $\kappa$-c.c., it follows that
for every maximal antichain $A$ of $i_W(F_{\kappa,\delta})$,
we may find an ordinal $\alpha<\kappa$
such that each function of $A$ acts at a level of $T(g^*)$ below $\alpha$.
Now take an inaccessible cardinal $\alpha<\beta<\kappa$ in $V[G]$.
Then $\langle f^{g^*}_{\beta},\delta \rangle \in F^*$
extends some element of $A$.
Thus $M_W[G^*,F^*]$ is an $i_W(\mathbb{P}_{\kappa} * \dot{\mathrm{Add}}(\kappa,1))$-generic extension of $M_W$.

Now work in $M_W$.
Let $\lambda > \kappa$ be an arbitrary singular strong limit cardinal of cofinality greater than $\kappa$.
Let $U$ be a normal measure over $\mathcal{P}_{\kappa}(\lambda)$,
so that the embedding $j:=j^{M_W}_U:M_W \rightarrow N \cong \mathrm{Ult}(M_W,U)$ satisfies that $N \vDash ``\kappa$ is not $\lambda$-supercompact$"$. Let $\pi=j\circ i_W$.

Work in $M_W[G^*,F^*]$, which is an $i_W(\mathbb{P}_{\kappa} * \dot{\mathrm{Add}}(\kappa,1))$-generic extension of $M_W$.
Note that $\kappa \in \pi(A)$ by Lemma \ref{lem10},
and in $N$, there is no strong cardinal in $(\kappa,\lambda]$ by Lemma \ref{lem11},
a standard argument shows that we may lift $j$ and obtain an embedding $j:M_W[G^*]\rightarrow N[G^*,F^*,H]$. Here $H$ is a $\pi(\mathbb{P})/ (G^**F^*)$-generic filter constructed in $M_W[G^*,F^*]$(see \cite{C2010} or \cite[Lemma 4]{AC2001} for the detail).

Let $D=j''\lambda \in N[G^*,F^*,H]$, then $\pi''\lambda=D$ and $N[G^*,F^*,H] \vDash |D|<\pi(\kappa)$.
Hence in $V[G]$, $\pi$ witnesses that $\kappa$ is $(\delta,\lambda)$-strongly compact.
As $\lambda$ is an arbitrarily strong limit cardinal of cofinality greater than $\kappa$,
it follows that $\kappa$ is $\delta$-strongly compact.

This completes the proof of Proposition \ref{t1}.
\end{proof}

\begin{remark}\label{rmk2}
Take any inaccessible $\theta$ with $\delta<\theta<\kappa$. For any ${<}\theta$-strategically closed forcing $\mathbb{Q}_0 \in V_{\kappa}$
and any forcing $\mathbb{Q}_1 \in V_{\theta}$ such that $i_W$ can be lifted to some embedding with domain $V^{\mathbb{Q}_1}$,
$\kappa$ is the least exactly $\delta$-strongly compact cardinal in $V^{\mathbb{P}\times \mathbb{Q}_0 \times \mathbb{Q}_1}$.
\end{remark}
It is easy to see that $\kappa$ is the least $\eta^+$-strongly compact cardinal in the proposition above.

If we add a $\delta$-ascent Suslin tree at the least $2$-Mahlo cardinal above $\kappa$ instead of $\kappa$, then $\kappa$ may be measurable.
Thus the least $\delta$-strongly compact cardinal may be non-strongly compact and measurable.
We may compare this with Menas' theorem that every measurable almost strongly compact cardinal must be strongly compact.
\begin{proposition}\label{t2}
Suppose $\kappa$ is a supercompact cardinal, $\kappa'$ is the least $2$-Mahlo cardinal above $\kappa$,
and $\delta<\kappa$ is a measurable cardinal.
Then for any $\eta<\delta$, there is a ${<}\eta$-directed closed forcing $\mathbb{P}$,
such that in $V^{\mathbb{P}}$, $\kappa$ is the least exactly $\delta$-strongly compact cardinal, and $\kappa$ is measurable.
\end{proposition}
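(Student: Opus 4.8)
The plan is to reproduce the construction of Proposition \ref{t1} almost verbatim, the one change being to \emph{move the $\delta$-ascent Suslin tree off $\kappa$ and up to $\kappa'$}, so that $\kappa$ itself is never destroyed. Concretely, I would let $\mathbb{P}=\mathbb{P}_\kappa*\dot{\mathrm{Add}}(\kappa,1)*\dot Q_{\kappa',\delta}$, where $\mathbb{P}_\kappa$ is precisely the preparation used below $\kappa$ in Proposition \ref{t1} (the $\mathrm{Add}(\alpha,1)$-stages for $\alpha\in A$ and the $Q_{\alpha,\eta}$-stages for $\alpha\in B$), and $Q_{\kappa',\delta}$ is computed in $V^{\mathbb{P}_\kappa*\mathrm{Add}(\kappa,1)}$, in which $\kappa'$ is still $2$-Mahlo because the forcing below it has size $\kappa<\kappa'$. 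Every nontrivial stage lies above $\delta$ and is ${<}\eta$-directed closed ($\mathrm{Add}(\alpha,1)$ and $Q_{\kappa',\delta}$ are even ${<}\delta$-directed closed, and $Q_{\alpha,\eta}$ is ${<}\eta$-directed closed by Fact \ref{factsus}), so $\mathbb{P}$ is ${<}\eta$-directed closed and preserves $\gch$, as required.

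For the ``least exactly $\delta$-strongly compact'' assertion I would keep the two-sided argument of Proposition \ref{t1}. The lower bound is identical: the $B$-stages add $\eta$-ascent Suslin trees cofinally below $\kappa$ and the tails above them are sufficiently strategically closed, so by Lemma \ref{lemma1} no $\alpha\in B$ carries an $\eta^+$-complete uniform ultrafilter in $V^{\mathbb{P}}$; since $B$ is unbounded in $\kappa$ and $\eta<\delta$, Theorem \ref{tKU} yields that no cardinal below $\kappa$ is $\delta$-strongly compact. That $\kappa$ is \emph{not} $\delta^+$-strongly compact is now witnessed at $\kappa'$ instead of at $\kappa$: the last stage $Q_{\kappa',\delta}$ adds a $\delta$-ascent $\kappa'$-Suslin tree, so Corollary \ref{kill} gives no $\delta^+$-complete uniform ultrafilter over the regular cardinal $\kappa'\ge\kappa$, and hence $\kappa$ is not $\delta^+$-strongly compact by Theorem \ref{tKU}. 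That $\kappa$ \emph{is} $\delta$-strongly compact I would prove by rerunning the resurrection argument of Proposition \ref{t1} with the tree relocated to $\kappa'$: for a large singular strong limit $\lambda$ of cofinality $>\kappa$, take a normal measure $W$ on $\delta$, lift $i_W$ to $V[G]\to M_W[G^*]$ using that $\mathbb{P}$ is ${<}\delta^+$-strategically closed, use the segment $[\delta,i_W(\delta))$ of the ascent path $i_W(\vec{f}^{g_{\kappa'}})$ to build an $i_W(F_{\kappa',\delta})$-generic $F^*$ (so that $\mathrm{Add}(\kappa',1)=Q_{\kappa',\delta}*\dot F_{\kappa',\delta}$ is reconstituted, by Fact \ref{factsus}), and finally lift a $\lambda$-supercompact embedding $j\colon M_W\to N$ via Lemmas \ref{lem10} and \ref{lem11}, exactly as before. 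The composite $\pi=j\circ i_W$ then provides a cover $D=j''\lambda$ with $|D|<\pi(\kappa)$; as these $\lambda$ are cofinal and $(\delta,\lambda)$-strong compactness is downward closed in $\lambda$, $\kappa$ is $\delta$-strongly compact. Here the hypothesis that $\kappa'$ is the \emph{least} $2$-Mahlo above $\kappa$ is used, via Lemma \ref{lem11}, to guarantee that there are no strong cardinals in $(\kappa,\lambda]$ of $N$, so that $\pi(\mathbb{P})$ splits in the same way as in Proposition \ref{t1}.

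The genuinely new clause is that $\kappa$ remains measurable, and this is where relocating the tree pays off. Since $A$ and $B$ are chosen (with $A\cap B=\emptyset$) so that $\kappa\in j(A)\setminus j(B)$ for a supercompact embedding $j$ (Lemma \ref{lem10}), the standard Apter--Cummings lifting shows that $\kappa$ is still supercompact, in particular measurable, in $V_1:=V^{\mathbb{P}_\kappa*\mathrm{Add}(\kappa,1)}$; this is the model whose image is the $M_W[G^*,F^*]$ appearing in Proposition \ref{t1}. Now $Q_{\kappa',\delta}$ is ${<}\kappa'$-strategically closed and $\kappa^+<\kappa'$, hence ${<}\kappa'$-distributive, so it adds neither new subsets of $\kappa$ nor new ${<}\kappa$-sequences. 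Therefore $\mathcal{P}(\kappa)$ is unchanged and any normal measure on $\kappa$ in $V_1$ remains a $\kappa$-complete nonprincipal ultrafilter in $V^{\mathbb{P}}$, so $\kappa$ is measurable there. This coexists with $\kappa$ being only exactly $\delta$-strongly compact: the $\kappa'$-tree destroys the $\delta^+$-complete uniform ultrafilter on $\kappa'$ (hence $\lambda$-supercompactness for $\lambda\ge\kappa'$) without disturbing the ultrafilters on $\kappa$, and $\kappa$ is not almost strongly compact, so Menas' theorem is not contradicted.

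I expect the only real obstacle to be the $\delta$-strong compactness clause, namely checking that the resurrection-and-lifting of Proposition \ref{t1} survives the move of the tree to $\kappa'$. Under the composite $\pi=j\circ i_W$ one must now lift $j$ (critical point $\kappa$) through the stage-$\kappa$ forcing $\mathrm{Add}(\kappa,1)$ by a master condition \emph{and}, at the higher stage $\kappa'$ with $j$ moving $\kappa'$ to $j(\kappa')$, feed in the resurrected generic that reconstitutes $\mathrm{Add}(\kappa',1)$; verifying that the corresponding master-condition sequence and the tail generic for $\pi(\mathbb{P})/(G^{*}*F^{*})$ can be constructed inside $M_W[G^*,F^*]$ is the delicate bookkeeping, even though each ingredient is already present in Proposition \ref{t1}. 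By contrast, once $\kappa$ is supercompact in $V_1$, measurability in $V^{\mathbb{P}}$ is immediate from the ${<}\kappa'$-distributivity of $Q_{\kappa',\delta}$.
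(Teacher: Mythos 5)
Your measurability argument (supercompactness in the intermediate model via Apter--Cummings, then ${<}\kappa'$-distributivity of $Q_{\kappa',\delta}$) and your lower-bound arguments (the $B$-stages below $\kappa$, and non-$\delta^+$-strong compactness witnessed by the tree at $\kappa'$) are correct. But the $\delta$-strong compactness step contains a structural gap, not ``delicate bookkeeping'', and it sits exactly where the paper's proof departs from Proposition \ref{t1}: you keep the set $A$ of Proposition \ref{t1} (strong cardinal limits of strong cardinals) and compensate with an explicit $\mathrm{Add}(\kappa,1)$ stage, whereas the paper \emph{redefines} $A$. With your $A$, consider the final lift of $j\colon M_W\to N$ and write $\pi(\mathbb{P})=\pi(\mathbb{P}_\kappa)*\mathrm{Add}(\pi(\kappa),1)*Q_{j(\kappa'),i_W(\delta)}$. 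By Lemmas \ref{lem10} and \ref{lem11}, $\kappa\in\pi(A)$ but there are no strong cardinals of $N$ in $(\kappa,\lambda]$, so $\kappa'\notin\pi(A)\cup\pi(B)$: the image forcing has its Cohen slot at $\kappa$ (filled by the lift $h^*$ of your stage-$\kappa$ generic) and a \emph{trivial} stage at $\kappa'$. Hence the Cohen generic you ``reconstitute'' from $g^**F^*$ has no stage of $\pi(\mathbb{P})$ to be fed into; $\pi(\mathbb{P})$ does not split as in Proposition \ref{t1}. This is fatal. Indeed, $j''g^*$ must go into the last stage $Q_{j(\kappa'),i_W(\delta)}$, whose conditions are elements of $N[\tilde{G},\tilde{h}]$, where $\tilde{G}*\tilde{h}$ is the target generic for $\pi(\mathbb{P}_\kappa)*\mathrm{Add}(\pi(\kappa),1)$. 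In your configuration, $N[\tilde{G},\tilde{h}]$ is obtained from $N[G^*_\kappa,h^*]$ by forcing whose stages all lie above $\lambda$, hence ${<}\lambda^+$-distributive; since $g^*$ is generic over $M_W[G^*_\kappa,h^*]\supseteq N[G^*_\kappa,h^*]$ and is essentially a subset of a set of size $\kappa'<\lambda$, it follows that $g^*\notin N[\tilde{G},\tilde{h}]$, no matter how $\tilde{G},\tilde{h}$ are built. But any condition of $Q_{j(\kappa'),i_W(\delta)}^{N[\tilde{G},\tilde{h}]}$ of height above $\sup j''\kappa'$ extending every member of $j''g^*$ would compute $\bigcup_{p\in g^*}j(T_p)$, and from this union together with $j$ restricted to the (fewer than $\lambda$ many) names for conditions of the tree forcing --- which lies in $N[\tilde{G},\tilde{h}]$ by the $\lambda$-closure of $N$ in $M_W$ --- one recovers $g^*$ inside $N[\tilde{G},\tilde{h}]$, a contradiction. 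So no generic over $N[\tilde{G},\tilde{h}]$ can contain $j''g^*$, Silver's criterion can never be met, and $j$ cannot be lifted at all.

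The paper's fix is precisely the move you omitted: in Proposition \ref{t2} the set $A$ is changed to $\{\alpha<\kappa \mid \alpha>\delta$ is the first $2$-Mahlo cardinal above some strong cardinal limit of strong cardinals$\}$, and the forcing is simply $\mathbb{P}_\kappa*\dot{Q}_{\kappa',\delta}$, with no Cohen stage at $\kappa$. With this $A$, in $N$ one checks (i) $\kappa\notin\pi(A)$, since in $N$ the $2$-Mahlo cardinals are unbounded below $\kappa$; and (ii) $\kappa'\in\pi(A)$: $\kappa$ is a strong cardinal limit of strong cardinals in $N$ by Lemma \ref{lem10}, and $\kappa'$ remains the least $2$-Mahlo above $\kappa$ in $N$, because any $2$-Mahlo of $M_W$ above $\kappa$ is represented as $[f]_W$ with $f(\xi)$ $2$-Mahlo in $V$ almost everywhere, hence is $\geq\kappa'$, and $V_\lambda^N=V_\lambda^{M_W}$. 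So stage $\kappa'$ of $\pi(\mathbb{P})$ \emph{is} $\mathrm{Add}(\kappa',1)$, and $g^**F^*$ is exactly its generic; consequently $g^*$ lies in the target model, the master condition at the final tree stage can be formed (cap $\bigcup_{p\in g^*}j(T_p)$ with the branches obtained by applying $j$ to the chains $\{f^{g^*}_{i_W(\alpha)}(\upsilon)\mid\alpha<\kappa'\}$ for $\delta\leq\upsilon<i_W(\delta)$), and the lift goes through as cited. In short: moving the tree from $\kappa$ to $\kappa'$ forces one to also move the Cohen slots encoded by $A$ from the strong limits of strong cardinals to the first $2$-Mahlo cardinals above them; keeping Proposition \ref{t1}'s $A$ and adding $\mathrm{Add}(\kappa,1)$ by hand makes the key lifting impossible.
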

\begin{proof}
Without loss of generality,
we may assume that $2^{\kappa}=\kappa^+$.
Take the Easton support forcing $\mathbb{P}=\mathbb{P}_{\kappa+1}$ in Proposition \ref{t1},
but with $A=\{ \alpha<\kappa \mid \alpha>\delta$ is the first $2$-Mahlo cardinal above some strong cardinal limit of strong cardinals$\}$,
and $\dot{\mathbb{R}}_{\kappa}$ names $(Q_{\kappa',\delta})_{V^{\mathbb{P}_{\kappa}}}$ instead of $(Q_{\kappa,\delta})_{V^{\mathbb{P}_{\kappa}}}$.
Then in $V^{\mathbb{P}}$, $\kappa$ is the least exactly $\delta$-strongly compact by the argument in Proposition \ref{t1}
and measurable by a standard lifting argument (see \cite[Theorem 11.1]{C2010}).
\end{proof}
In the above proposition,
note that $A \cup B$ is sparse and every element in $A \cup B$ is not measurable,
it is easy to see that if $\gch$ holds, then every measurable cardinal is preserved
by a standard lifting argument (see \cite[Theorem 11.1]{C2010}).

\begin{corollary}\label{t4}
Suppose $\kappa$ is supercompact and indestructible under any ${<}\kappa$-directed forcing, and $\delta<\kappa$ is a measurable cardinal.
Then $\kappa$ is exactly $\delta$-strongly compact in $V^{Q_{\kappa,\delta}}$. In addition,
if $\kappa'$ is the least $2$-Mahlo cardinal above $\kappa$,
then $\kappa$ is exactly $\delta$-strongly compact and measurable in $V^{Q_{\kappa',\delta}}$.
\end{corollary}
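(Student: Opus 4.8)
The plan is to prove both assertions by the method of Proposition~\ref{t1}, the simplification being that the assumed indestructibility lets us dispense entirely with the preparatory Easton iteration. Write $G$ for a $Q_{\kappa,\delta}$-generic filter over $V$ and work in $V[G]$. First, since a supercompact cardinal is $2$-Mahlo, Corollary~\ref{kill} shows that no cardinal $\leq\kappa$, and in particular $\kappa$ itself, is $\delta^+$-strongly compact in $V[G]$; so it remains to see that $\kappa$ is $\delta$-strongly compact there. By Theorem~\ref{tKU} it suffices to witness $(\delta,\lambda)$-strong compactness for a cofinal class of singular strong limit cardinals $\lambda>\kappa$ of cofinality greater than $\kappa$.

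Fix such a $\lambda$ and, exactly as in Proposition~\ref{t1}, carry out the construction inside $V[G]$. Let $W$ be a normal measure on $\delta$ with ultrapower $i_W\colon V\to M_W$; as $Q_{\kappa,\delta}$ is ${<}\kappa$-strategically closed by Fact~\ref{factsus}\eqref{ite00}, hence ${<}\delta^+$-strategically closed, the transfer argument lifts $i_W$ to $i_W\colon V[G]\to M_W[G^*]$, where $G^*$ is generated by $i_W''G$. In $M_W[G^*]$ the sequence $i_W(\vec{f}^{\,G})$ is an $i_W(\delta)$-ascent path through $i_W(T(G))$, and for $\upsilon$ with $\delta\leq\upsilon<i_W(\delta)$ its branches generate an $i_W(F_{\kappa,\delta})$-generic filter $F^*$ over $M_W[G^*]$, the genericity coming from the $\kappa$-c.c. of Fact~\ref{factsus}\eqref{ite01} together with the existence of inaccessibles below $\kappa$. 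By Fact~\ref{factsus}\eqref{it1}, $M_W[G^*][F^*]$ is therefore an $i_W(\mathrm{Add}(\kappa,1))$-generic extension of $M_W$, and this whole object has been built in $V[G]$. Now work in $M_W$, where $\kappa$ is still supercompact: choose a $\lambda$-supercompactness embedding $j\colon M_W\to N$ with $N\vDash$ ``$\kappa$ is not $\lambda$-supercompact'', and set $\pi=j\circ i_W$, an embedding of critical point $\delta$ with $\pi(\kappa)=j(\kappa)$. Since $i_W(\mathrm{Add}(\kappa,1))$ is ${<}\kappa$-directed closed and $\kappa$ is indestructibly supercompact in $M_W$, the tree $i_W(T(G))$ together with the branch added by $F^*$ supplies a master condition which, by a diagonal construction in $M_W[G^*][F^*]$ (using Lemma~\ref{lem11} to control the tail above $\kappa$), lets us lift $j$ and hence $\pi$ to $\pi\colon V[G]\to N[G^*,F^*,H]$. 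Finally, because $\lambda$ is a strong limit of cofinality greater than $\kappa$ we have $i_W''\lambda\subseteq\lambda$, so $D=j''\lambda$ lies in $N$, contains $\pi''\lambda$, and has size $\lambda<j(\kappa)=\pi(\kappa)$ in the target; thus $\pi$ witnesses that $\kappa$ is $(\delta,\lambda)$-strongly compact in $V[G]$.

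The main obstacle is precisely this \emph{internality}: one must produce $F^*$ and the lift of $j$ inside $V[G]$, so that the resulting embedding, and the $\delta$-complete uniform ultrafilter it yields, belong to $V[G]$. This is why the construction is routed through $i_W$ and has critical point $\delta$ rather than $\kappa$. Indeed, $\kappa$ is supercompact in $V[G][F]=V^{\mathrm{Add}(\kappa,1)}$ by indestructibility (using Fact~\ref{factsus}\eqref{it1}), and restricting a supercompactness embedding of that model to $V[G]$ would formally produce a cover of critical point $\kappa$; but such an embedding lives only in $V[G][F]$, and the ultrafilter it induces need not lie in $V[G]$, consistently with the failure of $\delta^+$-strong compactness guaranteed by Corollary~\ref{kill}.

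For the second assertion, replace $Q_{\kappa,\delta}$ by $Q_{\kappa',\delta}$ and write $G'$ for the generic. Being ${<}\kappa'$-strategically closed with $\kappa'>\kappa$, the forcing is ${<}\kappa^+$-distributive, so it adds no new subsets of $\kappa$ and any normal measure on $\kappa$ in $V$ remains a normal measure in $V[G']$; hence $\kappa$ is measurable (see \cite[Theorem~11.1]{C2010}). That $\kappa$ is exactly $\delta$-strongly compact follows as before: it is not $\delta^+$-strongly compact by Corollary~\ref{kill}, since $Q_{\kappa',\delta}$ adds a $\delta$-ascent $\kappa'$-Suslin tree, while $\delta$-strong compactness is obtained by repeating the construction of the second paragraph with $j$ now the $\lambda$-supercompactness embedding of $\kappa$ of critical point $\kappa<\kappa'$. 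The top forcing $Q_{\kappa',\delta}$ then sits above $\mathrm{crit}(j)$, and the lift of $\pi$ through it is supplied, just as above, by the master condition coming from $i_W(T(G'))$ and the branch added by $F^*$, yielding the cover $D=j''\lambda$ of size $\lambda<\pi(\kappa)$ in the target.
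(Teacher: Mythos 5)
Your routine steps are fine: the use of Corollary \ref{kill}, the transfer lift of $i_W$, the internal construction of $F^*$, and the preservation of measurability of $\kappa$ under the ${<}\kappa'$-distributive forcing $Q_{\kappa',\delta}$ are all correct, and your ``internality'' remarks correctly explain why the argument must be routed through $i_W$. The gap is at the decisive step: you choose the embedding $j\colon M_W\to N$ \emph{with domain $M_W$} and claim that a master condition built from $i_W(T(G))$ and the $F^*$-branches lets you lift $j$ to $M_W[G^*]\to N[G^*,F^*,H]$. That lift provably does not exist. In Proposition \ref{t1} the analogous lift works only because of the preparatory iteration: there $j(i_W(\mathbb{P}))$ is an iteration whose stage-$\kappa$ component is $\mathrm{Add}(\kappa,1)$ (this is exactly what $\kappa\in\pi(A)$, i.e.\ Lemma \ref{lem10}, is for), so $G^**F^*$ is an $N$-generic filter for an initial segment of $j(i_W(\mathbb{P}))$, and the master condition for the top copy of the tree forcing is a legitimate condition of that forcing \emph{as computed in the extension} $N[G^*,F^*,\dots]$, where $T(G^*)$ and its branches actually exist. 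In the Corollary there is no iteration: $j(i_W(Q_{\kappa,\delta}))$ is the single forcing $Q_{j(\kappa),i_W(\delta)}$ computed in $N$, and no filter on it that is generic over $N$ can contain $j''G^*=G^*$ (conditions in $G^*$ have size ${<}\kappa=\mathrm{crit}(j)$, so $j$ fixes them). Indeed, genericity forces such a filter to contain a condition of height above $\kappa$; by the end-extension ordering, the tree of that condition restricted to levels below $\kappa$ would equal $T(G^*)$, putting $T(G^*)$ into $N\subseteq M_W$ and contradicting the genericity of $G^*$ over $M_W$. Your proposed master condition --- $T(G^*)$ together with the $F^*$-branches --- is not an element of $N$, hence is not a condition of $Q^N_{j(\kappa),i_W(\delta)}$ at all; and the appeals to Lemma \ref{lem11} and to the choice ``$N\vDash\kappa$ is not $\lambda$-supercompact'' do no work here, since there is no iteration tail to control. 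The same objection applies verbatim to your sketch for $Q_{\kappa',\delta}$.

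What the indestructibility hypothesis is actually for is to replace that lift by a fresh embedding. Since $i_W(\kappa)=\kappa$, elementarity gives $M_W\vDash$ ``$\kappa$ is indestructible under ${<}\kappa$-directed closed forcing''; since $M_W[G^*][F^*]$ is an $\mathrm{Add}(\kappa,1)^{M_W}$-generic extension of $M_W$ by Fact \ref{factsus}\eqref{it1} and lies inside $V[G]$, the cardinal $\kappa$ is $\lambda$-supercompact \emph{in} $M_W[G^*][F^*]$. So choose $j$ with domain $M_W[G^*][F^*]$ (not $M_W$), restrict it to the inner model $M_W[G^*]$, and set $\pi=(j\restriction M_W[G^*])\circ i_W\colon V[G]\to j(M_W[G^*])$, an embedding with critical point $\delta$ whose data all lie in $V[G]$; no lifting through the tree forcing is needed. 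For the covering set: taking $\lambda$ a strong limit of cofinality above $\kappa$ gives $i_W''\lambda\subseteq\lambda$, so $\pi''\lambda\subseteq j''\lambda$; the set $j''\lambda$ lies in $j(M_W[G^*][F^*])=j(M_W[G^*])[j(F^*)]$ but perhaps not in $j(M_W[G^*])$, and here one uses that $j(i_W(F_{\kappa,\delta}))$ is $j(\kappa)$-c.c.\ in $j(M_W[G^*])$ (Fact \ref{factsus}\eqref{ite01} plus elementarity) to cover $j''\lambda$ by a set $D\in j(M_W[G^*])$ with $|D|<j(\kappa)=\pi(\kappa)$ there. This yields $(\delta,\lambda)$-strong compactness in $V[G]$, and the identical argument with $Q_{\kappa',\delta}$ --- whose two-step completion $\mathrm{Add}(\kappa',1)$ is still ${<}\kappa$-directed closed, so the indestructibility of $\kappa$ applies --- gives the second assertion.
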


\section{main theorems}\label{sec4}
In this section, by the proof of \cite[Theorem 2]{Apter(1996)}, for a given class $\mathcal{K}$ of supercompact cardinals,
we may assume that $V \vDash ``\mathrm{ZFC}+\mathrm{GCH}+ \mathcal{K}$ is the class of supercompact cardinals
$+$
Every supercompact cardinal $\kappa$ is Laver indestructible
under any ${<}\kappa$-directed closed forcing not destroying $\mathrm{GCH}$ $+$
The strongly compact cardinals and supercompact cardinals coincide precisely,
except possibly at measurable limit points$"$.

\begin{theorem}\label{thm1}
Suppose that $\mathcal{A}$ is a subclass of the class $\mathcal{K}$ of the supercompact cardinals containing none of its limit points,
and $\delta_{\kappa}$ is a measurable cardinal with $\sup(\mathcal{A}\cap \kappa)< \delta_{\kappa}<\kappa$ for every $\kappa \in \mathcal{A}$.
Then there exists a forcing extension, in which
$\kappa$ is the least exactly $\delta_{\kappa}$-strongly compact cardinal for every $\kappa \in \mathcal{A}$.
In addition, no new strongly compact cardinals are created and $\gch$ holds.
\end{theorem}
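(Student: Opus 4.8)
The plan is to run a global Easton-support iteration $\mathbb{P}$ that simultaneously performs, at each $\kappa \in \mathcal{A}$, the localized construction of Proposition \ref{t1} which turns $\kappa$ into the least exactly $\delta_\kappa$-strongly compact cardinal, while interleaving the standard ``killing non-supercompactness'' machinery of Apter--Cummings at the relevant non-supercompact strong cardinals so that no new strongly compact cardinals appear. Concretely, since $\mathcal{A}$ contains none of its limit points, the members of $\mathcal{A}$ are discretely spaced, and I can isolate for each $\kappa \in \mathcal{A}$ a half-open interval $(\sup(\mathcal{A}\cap\kappa),\kappa]$ inside which the forcing looks exactly like the $\mathbb{P}_{\kappa+1}$ of Proposition \ref{t1}: adding $\mathrm{Add}(\alpha,1)$ at strong limits of strongs $\alpha \in A$, adding an $\eta_\kappa$-ascent Suslin tree via $Q_{\alpha,\eta_\kappa}$ at the inaccessible limits of strongs $\alpha \in B$ (for a fixed regular $\eta_\kappa<\delta_\kappa$), and finally forcing with $Q_{\kappa,\delta_\kappa}$ at $\kappa$ itself.

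First I would fix the bookkeeping: define globally $A = \{\alpha \mid \alpha$ is a strong limit of strongs$\}$ and $B = \{\alpha \mid \alpha$ is a least inaccessible limit of strongs above some ordinal$\}$, and let $\mathbb{\dot R}_\alpha$ act as $\mathrm{Add}(\alpha,1)$ on $A$, as $Q_{\alpha,\eta_\kappa}$ on $B\cap(\sup(\mathcal{A}\cap\kappa),\kappa)$, and as $Q_{\kappa,\delta_\kappa}$ at each $\kappa\in\mathcal{A}$. Because $Q_{\alpha,\eta_\kappa}$ is ${<}\eta_\kappa$-directed closed and the Cohen factors are ${<}\alpha$-directed closed, $\mathbb{P}$ restricted above any relevant threshold is directed closed enough, so the Laver indestructibility of each $\kappa\in\mathcal{K}$ is preserved by the tail of the iteration. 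Then I would verify the three local conclusions for a fixed $\kappa\in\mathcal{A}$, reusing Proposition \ref{t1} almost verbatim: (i) Corollary \ref{kill} and Fact \ref{factsus}\eqref{ite1} give that $Q_{\kappa,\delta_\kappa}$ adds a $\delta_\kappa$-ascent $\kappa$-Suslin tree, so by Lemma \ref{lemma1} and Theorem \ref{tKU}, $\kappa$ is not $\delta_\kappa^+$-strongly compact; (ii) the $\eta_\kappa$-ascent trees placed cofinally at $B$-points below $\kappa$ kill all $\eta_\kappa^+$-strongly compact cardinals below $\kappa$, so $\kappa$ is least; (iii) the measure lifting through $i_W: V\to M_W$ followed by the supercompact embedding resurrects $\delta_\kappa$-strong compactness of $\kappa$ exactly as in Proposition \ref{t1}, using $\mathrm{Add}(\kappa,1)\equiv Q_{\kappa,\delta_\kappa}*\dot F_{\kappa,\delta_\kappa}$ from Fact \ref{factsus}\eqref{it1} to convert the Cohen-generic in the target into the ascent-path generic.

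The delicate point is ensuring \emph{no new strongly compact cardinals} while running all these local constructions at once, which is where I expect the main obstacle. By the initial arrangement, in $V$ the strongly compact cardinals are exactly $\mathcal{K}$ except possibly at its measurable limit points; I must argue the iteration preserves supercompactness of every $\kappa\in\mathcal{K}$ (via indestructibility, since each tail is suitably directed closed and $\mathrm{GCH}$-preserving) and creates no strong compactness at cardinals outside $\mathcal{K}$. The latter splits into two sub-arguments: below each $\kappa\in\mathcal{A}$ the dense $B$-trees eliminate everything up to $\kappa$, and between consecutive members of $\mathcal{A}$ one must check that no cardinal in a gap becomes strongly compact — here the key is that any would-be strongly compact $\mu$ in a gap would have to be a measurable limit of supercompacts in $V$, hence a limit point of $\mathcal{K}$, and the Suslin-tree obstruction together with the sparseness of $A\cup B$ rules this out via Lemma \ref{lemma1} applied at an appropriate $\eta$. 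Finally, $\mathrm{GCH}$ is preserved because the iteration has Easton support and each factor is sufficiently small relative to the next active stage, so a standard nice-names count goes through.

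The principal difficulty, then, is not any single local lifting but the \emph{global coherence}: one must simultaneously lift the normal measure $i_W$ on each $\delta_\kappa$ through the entire iteration and check that the generics $G^*$ built for different $\kappa\in\mathcal{A}$ do not interfere, while the Easton support and the gaps guarantee that the forcing at and above $\kappa$ factors cleanly as $\mathbb{P}_\kappa * \dot{\mathbb{R}}_\kappa * \dot{\mathbb{P}}_{\text{tail}}$ with the tail highly closed. I would handle this by a standard chain-condition and closure bookkeeping — each $\mathbb{P}_\kappa$ has size and chain condition controlled by $\kappa$, each tail is ${<}\kappa^+$-strategically closed — so that ultrafilters and embeddings relevant to $\kappa$ are determined by a bounded initial segment, exactly as in the ``any ultrafilter over $\alpha$ in $V[G]$ is already in $V[G_{\alpha+1}]$'' step of Proposition \ref{t1}. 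With that factorization in hand, the proof reduces to invoking Proposition \ref{t1} locally at each $\kappa\in\mathcal{A}$ and the Apter--Cummings non-supercompactness destruction globally, which together yield the theorem.
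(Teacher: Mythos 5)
Your overall architecture diverges from the paper's in a way that creates a real gap: you run a single Easton-support \emph{iteration}, whereas the paper takes the Easton-support \emph{product} $\mathbb{P}=\Pi_{\kappa\in\mathcal{A}}\mathbb{P}_{\kappa}$ of the block forcings from Proposition \ref{t1}. This is not cosmetic. With a product the paper can factor $\mathbb{P}=\mathbb{Q}^{\kappa}\times\mathbb{P}_{\kappa}\times\mathbb{Q}_{<\kappa}$ and force the upper part \emph{first}: $\mathbb{Q}^{\kappa}$ is ${<}\kappa$-directed closed over $V$ and preserves $\gch$, so Laver indestructibility keeps $\kappa$ supercompact in $V^{\mathbb{Q}^{\kappa}}$; Proposition \ref{t1} is then applied over $V^{\mathbb{Q}^{\kappa}}$ (after checking that the strong cardinals below $\kappa$, hence the definition of $\mathbb{P}_{\kappa}$, are the same there as in $V$); finally $|\mathbb{Q}_{<\kappa}|<\delta_{\kappa}$, so a Levy--Solovay argument finishes. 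In your iteration, the blocks at members of $\mathcal{A}$ above $\kappa$ come \emph{after} the block at $\kappa$, at which point $\kappa$ is no longer supercompact — it is not even measurable, having acquired a $\delta_{\kappa}$-ascent $\kappa$-Suslin tree — so indestructibility says nothing about it, and you must show the tail preserves the just-created $\delta_{\kappa}$-strong compactness. Your appeal to ``ultrafilters and embeddings relevant to $\kappa$ are determined by a bounded initial segment'' does not do this: that argument (as in Proposition \ref{t1}) works only in the \emph{negative} direction, for ultrafilters over a fixed small $\alpha$ which the strategically closed tail cannot add; the \emph{positive} direction requires, for every regular $\lambda\geq\kappa$, a $\delta_{\kappa}$-complete uniform ultrafilter on $\lambda$ in the final model, and the tail keeps adding subsets of arbitrarily large $\lambda$, so these ultrafilters cannot come from any bounded stage. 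You would have to redo the lifting of $j\circ i_W$ through block-plus-tail simultaneously, a genuine extra argument that your proposal does not supply and that the product structure is designed to avoid.

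The second gap is your treatment of ``no new strongly compact cardinals'' at limit points of $\mathcal{A}$. You claim any would-be strongly compact cardinal arising there is ``ruled out'' by the Suslin-tree obstruction and Lemma \ref{lemma1}. This cannot work: an ascent tree at $\alpha<\theta$ only obstructs uniform ultrafilters on $\alpha$, hence only kills cardinals $\leq\alpha$; it places no constraint on a limit point $\theta$ of $\mathcal{A}$, whose strong compactness concerns $\lambda\geq\theta$. Indeed, limit points of $\mathcal{A}$ \emph{do} become almost strongly compact in the extension — this is exactly how the paper later proves Theorem \ref{coric} — and if such a $\theta$ is measurable in $V^{\mathbb{P}}$, then it genuinely \emph{is} strongly compact there by Menas's theorem, so no obstruction can rule it out. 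The correct argument, which the paper outlines (following Apter), goes the other way: show that any such $\theta$ that is measurable in $V^{\mathbb{P}}$ was already measurable in $V$, by finding a single condition in the $\theta$-c.c.\ forcing $\mathbb{Q}_{<\theta}$ deciding $X\in\dot\mu$ for every $X\subseteq\theta$ lying in $V$ (via the weak-compactness tree argument); then $\theta$ is a measurable limit point of $\mathcal{K}$ in $V$, hence already strongly compact in $V$ by Menas together with the ground-model preparation, so it is not \emph{new}. Your proposal omits this pull-back argument entirely, and the claim offered in its place is false.
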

\begin{proof}
For each $\kappa \in \mathcal{A}$,
let $\eta_{\kappa}=(2^{\sup(\mathcal{A}\cap \kappa)})^+$,
with $\eta_{\kappa}=\omega_1$ for $\kappa$ the least element of $\mathcal{A}$.
Let $\mathbb{P}_{\kappa}$ be a forcing that forces $\kappa$
to be the least exactly $\delta_{\kappa}$-strongly compact cardinal
by first taking $\eta=\eta_{\kappa}$ and $\delta=\delta_{\kappa}$ and
then using any of ${<}\eta$-directed closed forcing given by Proposition \ref{t1}.
Let $\mathbb{P}$ be the Easton support product $\Pi_{\kappa \in \mathcal{A}}\mathbb{P}_{\kappa}$.
Note that the forcing is a product, and the field of $\mathbb{P}_{\kappa}$ lies in $(\delta_{\kappa},\kappa]$,
which contains no elements of $\mathcal{A}$,
so the fields of the forcings $\mathbb{P}_{\kappa}$ occur in different blocks.
Though $\mathbb{P}$ is a class forcing,
the standard Easton argument shows $V^{\mathbb{P}}\vDash \mathrm{ZFC}+\mathrm{GCH}$.

\begin{lemma}
If $\kappa \in \mathcal{A}$, then $V^{\mathbb{P}} \vDash ``\kappa$ is the least exactly $\delta_{\kappa}$-strongly compact cardinal$"$.
\end{lemma}
\begin{proof}
We may factor the forcing  in $V$ as $\mathbb{P}=\mathbb{Q}^{\kappa} \times \mathbb{P}_{\kappa} \times \mathbb{Q}_{<\kappa}$,
where $\mathbb{Q}^{\kappa}=\Pi_{\alpha>\kappa}\mathbb{P}_{\alpha}$ and $\mathbb{Q}_{<\kappa}=\Pi_{\alpha<\kappa}\mathbb{P}_{\alpha}$.

By indestructibility and the fact that $\mathbb{Q}^{\kappa}$ is a ${<}\kappa$-directed closed forcing not destroying $\gch$,
it follows that $\kappa$ is supercompact in $V^{\mathbb{Q}^{\kappa}}$.
Note that the strong cardinals below $\kappa$ are the same in $V^{\mathbb{Q}^{\kappa}}$ and $V$, because $\mathbb{Q}^{\kappa}$ is ${<}\kappa$-directed closed and $\kappa$ is supercompact in $V^{\mathbb{Q}^{\kappa}}$.
Therefore, the forcing $\mathbb{P}_{\kappa}$ satisfies the same definition in either $V$ or $V^{\mathbb{Q}^{\kappa}}$. So $\kappa$ is the least $\delta_{\kappa}$-strongly compact cardinal in $V^{\mathbb{Q}^{\kappa}\times \mathbb{P}_{\kappa}}$ by Proposition \ref{t1}.

Note that $\delta_{\kappa}>\sup(\mathcal{A}\cap \kappa)$ is a measurable cardinal, it follows that $|\mathbb{Q}_{<\kappa}|<\delta_{\kappa}$. So $V^{\mathbb{P}}=V^{\mathbb{Q}^{\kappa} \times \mathbb{P}_{\kappa} \times \mathbb{Q}_{<\kappa}}\vDash ``\kappa$ is the least exactly $\delta_{\kappa}$-strongly compact cardinal$"$.
\end{proof}
Now we only need to prove that no new strongly compact cardinal are created.
In other words, if $\theta$ is strongly compact in $V^{\mathbb{P}}$,
then $\theta \in \mathcal{K}$ or it is a measurable limit point of $\mathcal{K}$ in $V$.
Suppose not.
If $\sup(\mathcal{A})<\theta$, then $|\mathbb{P}|<\theta$.
This implies that $\theta$ is also strongly compact in $V$, contrary to our assumption.
So we may assume that $\theta\leq \sup(\mathcal{A})$.
Then there are two cases to consider:

$\br$ $\theta$ is not a limit point of $\mathcal{A}$.
Let $\kappa=\min(\mathcal{A} \setminus (\theta+1))$.
Then $\theta \in (\eta_{\kappa},\kappa)$.
We may factor $\mathbb{P}$ as $\mathbb{Q}^{\kappa} \times \mathbb{P}_{\kappa} \times \mathbb{Q}_{<\kappa}$.
Then in $V^{\mathbb{P}_{\kappa}}$, $\theta$ is strongly compact,
which implies that $\theta$ is $\eta_{\kappa}^+$-strongly compact.
However, $\kappa$ is the least $\eta_{\kappa}^+$-strongly compact cardinal in $V^{\mathbb{P}_{\kappa}}$ by Proposition \ref{t1}, a contradiction.

$\br$ $\theta$ is a limit of $\mathcal{A}$. For the sake of completeness,
we give an outline of the proof here (see \cite[Lemma 8]{Apter(1996)} for details).

Note that $\theta$ is measurable in $V^{\mathbb{P}}$, it is easy to see that $\theta$ is measurable in $V^{\mathbb{Q}_{< \theta}}:=V^{\Pi_{\alpha<\theta}\mathbb{P}_{\alpha}}$. In addition, $\mathbb{Q}_{< \theta}$ has $\theta$-c.c. in $V^{\mathbb{Q}_{< \theta}}$.

Work in $V^{\mathbb{Q}_{< \theta}}$. Let $\mu$ be a normal measure over $\theta$.
We can prove that there exists a $q \in \mathbb{Q}_{<\theta}$, such that for every $X \subseteq \theta$ in $V$,
we have $q$ decides $X \in \dot{\mu}$, i.e.,
$q \Vdash ``X \in \dot{\mu}"$ or $q \Vdash ``X \notin \dot{\mu}"$.
Otherwise, we can construct a $\theta$-tree
$T=\{\langle r_s,X_s \rangle\mid s \in \ ^{<\theta}2, r_s \in \mathbb{Q}_{<\theta}, X_s \subseteq \theta\}$ such that
\begin{enumerate}
  \item $r_s \Vdash ``X_s \in \dot{\mu}"$,
  \item if $s' \subseteq s$ then $r_{s}\leq_{\mathbb{Q}_{< \theta}} r_{s'}$,
  \item $X_s=X_{s^{\frown}\langle 0 \rangle}\cup X_{s^{\frown}\langle 1 \rangle}$ and $X_{s^{\frown}\langle 0 \rangle}\cap X_{s^{\frown}\langle 1 \rangle}=\emptyset$.
  \item If $\dom(s)$ is a limit ordinal, then $X_s=\cap_{s' \subsetneq s}X_{s'}$.
\end{enumerate}
Note that $\theta$ is weakly compact in $V^{\mathbb{Q}_{<\theta}}$,
it follows that $T$ has a cofinal branch $\langle \langle r_s,X_s \rangle \mid s\in \ ^{<\theta}2, \ s \subseteq f \rangle$ for some function $f:\theta \rightarrow 2$.
Then $\{r_{s^{\frown}\langle i \rangle} \mid  i=0 \vee i=1, s\in \ ^{<\theta}2, \ s \subseteq f, s^{\frown}\langle i \rangle \nsubseteq f\}$
is an antichain of $\mathbb{Q}_{<\theta}$ of size $\theta$.
However, $\mathbb{Q}_{<\theta}$ has $\theta$-c.c., a contradiction.
Hence there exists a $q \in \mathbb{Q}_{<\theta}$, such that
$q$ decides $X \in \dot{\mu}$ for every $X \subseteq \theta$ in $V$, i.e., $\theta$ is measurable in $V$.

This completes the proof of Theorem \ref{thm1}.
\end{proof}

In the above proof, if we let $\mathbb{P}_{\kappa}$ be the poset in Proposition \ref{t2} instead of the poset in Proposition \ref{t1},
then $\kappa$ remains measurable in $V^{\mathbb{P}}$.

If there is no measurable limit point of $\mathcal{K}$ and $\mathcal{A}=\mathcal{K}$,
then there is no strongly compact cardinal in $V^{\mathbb{P}}$.
\begin{corollary}
Suppose $\mathcal{K}$ is the class of supercompact cardinals with no measurable limit points, and $\delta_{\kappa}$ is measurable with $\sup(\mathcal{K}\cap \kappa)<\delta_{\kappa}<\kappa$ for any $\kappa \in \mathcal{K}$. Then there exists a forcing extension, in which $\kappa$ is the least exactly $\delta_{\kappa}$-strongly compact cardinal for any $\kappa \in \mathcal{K}$, and there is no strongly compact cardinal.
\end{corollary}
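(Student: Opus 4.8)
The plan is to specialize Theorem \ref{thm1} to the case $\mathcal{A} = \mathcal{K}$ and then argue that, under the present hypotheses, neither of the two ways a cardinal could end up strongly compact survives. First I would check that $\mathcal{K}$ is a legitimate choice for $\mathcal{A}$. Theorem \ref{thm1} demands that $\mathcal{A}$ contain none of its limit points, whereas here we are only told that $\mathcal{K}$ has no \emph{measurable} limit points. These coincide for $\mathcal{K}$ itself: a limit point of $\mathcal{K}$ that happened to lie in $\mathcal{K}$ would be a supercompact cardinal and hence measurable, so forbidding measurable limit points is exactly what guarantees that $\mathcal{K}$ omits all of its limit points. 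The remaining hypotheses match verbatim, since $\sup(\mathcal{A} \cap \kappa) = \sup(\mathcal{K} \cap \kappa)$ and each $\delta_\kappa$ is measurable with $\sup(\mathcal{K} \cap \kappa) < \delta_\kappa < \kappa$.

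Applying Theorem \ref{thm1} then produces a forcing extension $V^{\mathbb{P}}$ satisfying $\gch$ in which every $\kappa \in \mathcal{K}$ is the least exactly $\delta_\kappa$-strongly compact cardinal and no new strongly compact cardinals are created. What I would actually extract from its proof is the sharper conclusion recorded there: any $\theta$ that is strongly compact in $V^{\mathbb{P}}$ must either belong to $\mathcal{K}$ or be a measurable limit point of $\mathcal{K}$ in $V$.

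It then suffices to rule out both alternatives, and this is the only genuinely new step. The measurable-limit-point case is excluded outright by the standing hypothesis on $\mathcal{K}$. For $\theta \in \mathcal{K}$, I would invoke the definitions directly: $\theta$ is exactly $\delta_\theta$-strongly compact with $\delta_\theta < \theta$, so $\theta$ is not $\delta_\theta^+$-strongly compact, and since $\delta_\theta^+ \leq \theta$ this already prevents $\theta$ from being $\theta$-strongly compact, i.e.\ strongly compact. Hence no cardinal is strongly compact in $V^{\mathbb{P}}$. The corollary is essentially a direct specialization of Theorem \ref{thm1}, so there is no substantial obstacle beyond the hypothesis-translation in the first step; the main thing to be careful about is that ``no measurable limit points'' is genuinely the correct strengthening of ``contains none of its limit points'' needed to apply the theorem.
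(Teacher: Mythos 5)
Your proposal is correct and is essentially the paper's own argument: the corollary is obtained by specializing Theorem \ref{thm1} to $\mathcal{A}=\mathcal{K}$, using that a limit point of $\mathcal{K}$ lying in $\mathcal{K}$ would be supercompact and hence measurable, so ``no measurable limit points'' indeed yields the hypothesis that $\mathcal{K}$ contains none of its limit points. The paper likewise concludes by noting that any strongly compact cardinal of the extension must be an element of $\mathcal{K}$ (measurable limit points being excluded by hypothesis), and each $\kappa\in\mathcal{K}$ is exactly $\delta_\kappa$-strongly compact with $\delta_\kappa^+\leq\kappa$, hence not strongly compact.
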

In the corollary above, note that in the forcing extension, every limit point of $\mathcal{K}$ is almost strongly compact,
we may have a model in which the least almost strongly compact cardinal is not strongly compact.
\begin{theorem}\label{coric}
Suppose that $\langle \kappa_n \mid n<\omega \rangle$ is an increasing sequence of supercompact cardinals. Let $\kappa=\lim_{n<\omega}\kappa_n$. Then there is a forcing extension, in which $\kappa$ is the least almost strongly compact cardinal.
\end{theorem}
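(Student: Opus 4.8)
The plan is to realize $\kappa=\lim_n\kappa_n$ as the top of an $\omega$-hierarchy of the kind produced by Theorem \ref{thm1}. Invoking the standing assumptions of this section, I may assume $V\models\gch$, each $\kappa_n$ is supercompact and Laver-indestructible under ${<}\kappa_n$-directed closed $\gch$-preserving forcing, and strong compactness agrees with supercompactness off the measurable limits of supercompacts. For each $n$ I would fix a measurable cardinal $\delta_n$ with $\kappa_{n-1}<\delta_n<\kappa_n$ (setting $\kappa_{-1}=0$); these exist since each $\kappa_n$ is a limit of measurables. Put $\mathcal{A}=\{\kappa_n\mid n<\omega\}$ and $\delta_{\kappa_n}=\delta_n$. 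Then $\mathcal{A}$ is a set of supercompacts whose only limit point is $\kappa\notin\mathcal{A}$, so Theorem \ref{thm1} supplies an Easton product $\mathbb{P}=\prod_n\mathbb{P}_{\kappa_n}$ forcing $\gch$ and making each $\kappa_n$ the least exactly $\delta_n$-strongly compact cardinal. Writing $\eta_n=\eta_{\kappa_n}$ for the directed-closure bound of $\mathbb{P}_{\kappa_n}$, we have $\eta_0=\omega_1$ and, using $\gch$, $\eta_n=\kappa_{n-1}^{++}$ for $n\geq1$, so $\eta_n^+=\kappa_{n-1}^{+++}$.

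First I would check that $\kappa$ is almost strongly compact in $V^{\mathbb{P}}$. As $\delta_n>\kappa_{n-1}$ we have $\sup_n\delta_n=\kappa$, so for any uncountable $\delta<\kappa$ choose $n$ with $\delta\leq\delta_n$. Since $\kappa_n$ is $\delta_n$-strongly compact it is $\delta$-strongly compact, so by Theorem \ref{tKU} every regular $\lambda\geq\kappa$ carries a $\delta$-complete uniform ultrafilter (as $\lambda\geq\kappa_n$); a second application of Theorem \ref{tKU} shows that $\kappa$ is $\delta$-strongly compact. As $\delta$ was arbitrary, $\kappa$ is almost strongly compact.

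The crux is that no $\theta<\kappa$ is almost strongly compact, and this rests on the following consequence of Proposition \ref{t1}: for every $n$, no cardinal below $\kappa_n$ is $\eta_n^+$-strongly compact in $V^{\mathbb{P}}$. To see this I would factor $\mathbb{P}=\mathbb{Q}^{n}\times\mathbb{P}_{\kappa_n}\times\mathbb{Q}_{<n}$ with $\mathbb{Q}^{n}=\prod_{m>n}\mathbb{P}_{\kappa_m}$ and $\mathbb{Q}_{<n}=\prod_{m<n}\mathbb{P}_{\kappa_m}$. Here $\mathbb{Q}^{n}$ is so directed closed that it adds no subsets of $\kappa_n$, leaves the definition of $\mathbb{P}_{\kappa_n}$ unchanged, and preserves the supercompactness of $\kappa_n$; the iteration $\mathbb{P}_{\kappa_n}$ then adds $\eta_n$-ascent Suslin trees at cofinally many inaccessibles $\alpha<\kappa_n$, so by Lemma \ref{lemma1} (as in Corollary \ref{kill}) no such $\alpha$ carries an $\eta_n^+$-complete uniform ultrafilter in $V^{\mathbb{Q}^{n}\times\mathbb{P}_{\kappa_n}}$; and $\mathbb{Q}_{<n}$, of size $\leq\kappa_{n-1}^+<\eta_n^+$, cannot create one by a L\'evy--Solovay argument. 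Since these $\alpha$ are regular and cofinal in $\kappa_n$, Theorem \ref{tKU} yields the assertion. In particular no cardinal below $\kappa$ is strongly compact: for $\theta<\kappa$, with $n$ least such that $\theta<\kappa_n$, either $\theta>\eta_n^+$ and $\theta$ fails to be $\eta_n^+$-strongly compact, or $\theta=\kappa_{n-1}$ (which is only exactly $\delta_{n-1}$-strongly compact) or $\theta$ is a successor cardinal in $(\kappa_{n-1},\eta_n^+]$ (hence not inaccessible).

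Now let $\theta<\kappa$ be almost strongly compact and let $n$ be least with $\theta<\kappa_n$, so $\kappa_{n-1}\leq\theta<\kappa_n$. If $\theta=\kappa_{n-1}$ then $\theta$ is exactly $\delta_{n-1}$-strongly compact, hence not $\delta_{n-1}^+$-strongly compact, contradicting almost strong compactness. If $\eta_n^+<\theta<\kappa_n$ then almost strong compactness makes $\theta$ $\eta_n^+$-strongly compact, contradicting the previous paragraph. The only remaining possibility is that $\theta$ is a successor cardinal in $(\kappa_{n-1},\eta_n^+]$, necessarily of uncountable cofinality; since $\gch$ and hence $\sch$ holds, \cite[Theorem 5.7]{G2020} forces $\theta$ to be strongly compact, the successor of a strongly compact cardinal, or a limit of almost strongly compact cardinals. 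A successor cardinal is not strongly compact and not a limit cardinal, so $\theta$ would be the successor of a strongly compact cardinal below $\kappa$, which we just excluded. Thus no $\theta<\kappa$ is almost strongly compact, and together with the second paragraph this shows $\kappa$ is the least almost strongly compact cardinal. The main difficulty is the displayed assertion about $\eta_n^+$-strong compactness: one must verify carefully that the highly closed factor $\mathbb{Q}^{n}$ neither revives the ultrafilters killed by the Suslin trees nor alters $\mathbb{P}_{\kappa_n}$, and that the small factor $\mathbb{Q}_{<n}$ adds none; the successor case is the sole place where $\sch$ and Goldberg's structural theorem are essential, matching the introduction's observation that the obstruction sits precisely at cofinality $\omega$.
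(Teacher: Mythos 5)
Your proof is correct and follows the same route as the paper's: apply Theorem \ref{thm1} to $\mathcal{A}=\{\kappa_n\mid n<\omega\}$ with interleaved measurables $\delta_n\in(\kappa_{n-1},\kappa_n)$, deduce almost strong compactness of $\kappa$ from $\sup_n\delta_n=\kappa$, and deduce leastness from the existence of unboundedly many regular $\alpha<\kappa_n$ carrying no $(2^{\kappa_{n-1}})^{++}$-complete uniform ultrafilters; your factoring $\mathbb{P}=\mathbb{Q}^{n}\times\mathbb{P}_{\kappa_n}\times\mathbb{Q}_{<n}$ together with the L\'evy--Solovay step for the small factor is exactly the argument the paper runs inside Theorem \ref{thm1} and Proposition \ref{t1}. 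The one genuine difference is your treatment of the cardinals $\theta\in(\kappa_{n-1},\eta_n^+]$: the paper simply asserts that no cardinal in $(\kappa_{n-1},\kappa_n)$ is almost strongly compact, although the killed-ultrafilter argument only reaches those $\theta$ above $\eta_n^+=(2^{\kappa_{n-1}})^{++}$, since an almost strongly compact $\theta\le\eta_n^+$ need only be $\delta$-strongly compact for $\delta\le\eta_n$, a degree of completeness the $\eta_n$-ascent Suslin trees do not refute. You close this residual case by noting that under $\gch$ such $\theta$ are successor cardinals of uncountable cofinality, so Goldberg's structure theorem under $\sch$ (\cite[Theorem 5.7]{G2020}) forces $\theta$ to be the successor of a strongly compact cardinal below $\kappa$, which you have already excluded. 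This extra lemma is not used in the paper's own proof of the theorem, and invoking it makes your write-up strictly more complete at that point; the trade-off is a reliance on Goldberg's theorem, whereas the paper treats the successor-cardinal case as negligible without comment.
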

\begin{proof}
Let $\kappa_{-1}=\omega_1$ for simplicity. Let $\delta_n$ be the least measurable cardinal greater than $\kappa_{n-1}$ for every $n<\omega$.
Then by Theorem \ref{thm1}, there is a forcing extension, say $V^{\mathbb{P}}$,
in which $\kappa_n$ is the least exactly $\delta_n$-strongly compact cardinal for every $n<\omega$.
Thus $\kappa$ is $\delta_n$-strongly compact cardinal for every $n<\omega$.
Note also that $\kappa=\lim_{n<\omega}\delta_n$,
we have $\kappa$ is almost strongly compact.

Take any $n<\omega$.
Then in $V^{\mathbb{P}}$, there are unboundedly many $\alpha$ below $\kappa_n$
such that there is no $(2^{\kappa_{n-1}})^{++}$-complete uniform ultrafilter over $\alpha$.
So there is no almost strongly compact cardinal in $(\kappa_{n-1},\kappa_n)$.
Since $n<\omega$ is arbitrary, it follows that there is no almost strongly compact cardinal below $\kappa$.
Thus $\kappa$ is the least almost strongly compact cardinal.
\end{proof}
This answers Question \ref{q2} in the negative.

If there exists a proper class of supercompact cardinals, we may get a model in which there exists a proper class of almost strongly compact cardinals, but there are no strongly compact cardinals.
\begin{corollary}
Suppose there is a proper class of supercompact cardinals with no measurable limit points. Then there exists a forcing extension, in which there is a proper class of almost strongly compact cardinals, but there are no strongly compact cardinals.
\end{corollary}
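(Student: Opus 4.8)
The plan is to derive this from Theorem \ref{thm1} applied to the whole class $\mathcal{K}$, together with the upward-transfer observation underlying the proof of Theorem \ref{coric}. First I would normalize the ground model as at the start of this section, so that $\mathcal{K}$ is precisely the class of supercompact cardinals, each is Laver indestructible under ${<}\kappa$-directed closed forcing preserving $\gch$, and the strongly compact cardinals coincide with the supercompact ones except possibly at measurable limit points. Since by hypothesis $\mathcal{K}$ has no measurable limit points, a supercompact limit of supercompacts (which would be a measurable limit point of $\mathcal{K}$) cannot exist; hence $\mathcal{K}$ contains none of its own limit points and is a legitimate choice for $\mathcal{A}$ in Theorem \ref{thm1}. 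For each $\kappa\in\mathcal{K}$ the ordinal $\sup(\mathcal{K}\cap\kappa)$ is then below $\kappa$, and as $\kappa$ is a limit of measurable cardinals I would let $\delta_{\kappa}$ be any measurable cardinal with $\sup(\mathcal{K}\cap\kappa)<\delta_{\kappa}<\kappa$. Applying Theorem \ref{thm1} with $\mathcal{A}=\mathcal{K}$ yields a (class) forcing extension $V^{\mathbb{P}}$ in which every $\kappa\in\mathcal{K}$ is the least exactly $\delta_{\kappa}$-strongly compact cardinal, $\gch$ holds, and no new strongly compact cardinals are created.

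The core of the argument is to show that every limit point of $\mathcal{K}$ is almost strongly compact in $V^{\mathbb{P}}$. Fix such a $\lambda$ and an increasing sequence $\langle \kappa_i \mid i<\cof(\lambda)\rangle$ of elements of $\mathcal{K}$ converging to $\lambda$. Each $\kappa_i$ is $\delta_{\kappa_i}$-strongly compact, so by Theorem \ref{tKU} there is, for every regular $\mu\geq\kappa_i$, a $\delta_{\kappa_i}$-complete uniform ultrafilter over $\mu$; since every regular $\mu\geq\lambda$ is in particular ${\geq}\,\kappa_i$, the same characterization shows that $\lambda$ is $\delta_{\kappa_i}$-strongly compact. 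Because for $i<i'$ we have $\kappa_i\in\mathcal{K}\cap\kappa_{i'}$ and hence $\delta_{\kappa_{i'}}>\sup(\mathcal{K}\cap\kappa_{i'})\geq\kappa_i$, the cardinals $\delta_{\kappa_i}$ are cofinal in $\lambda$. Thus for every uncountable $\delta<\lambda$ I can choose $i$ with $\delta_{\kappa_i}>\delta$, and $\delta_{\kappa_i}$-strong compactness of $\lambda$ gives $\delta$-strong compactness. Therefore $\lambda$ is $\delta$-strongly compact for every uncountable $\delta<\lambda$, i.e. $\lambda$ is almost strongly compact.

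Finally, since $\mathcal{K}$ is a proper class it is unbounded in the ordinals, so its class of limit points is also unbounded and hence a proper class; by the previous paragraph this is a proper class of almost strongly compact cardinals in $V^{\mathbb{P}}$. That there are no strongly compact cardinals is exactly the conclusion of Theorem \ref{thm1} in the case $\mathcal{A}=\mathcal{K}$ with no measurable limit points: any strongly compact cardinal of $V^{\mathbb{P}}$ would already have to lie in $\mathcal{K}$, yet each such cardinal is now only exactly $\delta_{\kappa}$-strongly compact and so fails to be $\delta_{\kappa}^{+}$-strongly compact. I expect the step requiring the most care to be the upward transfer of $\delta_{\kappa_i}$-strong compactness from $\kappa_i$ to $\lambda$; this is where Theorem \ref{tKU} is essential, and it is precisely the move that removes any cofinality restriction on $\lambda$, so that the conclusion covers limit points of all cofinalities rather than only those of cofinality $\omega$ as in Theorem \ref{coric}.
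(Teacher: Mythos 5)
Your proposal is correct and follows essentially the same route as the paper: apply Theorem \ref{thm1} with $\mathcal{A}=\mathcal{K}$ (legitimate since a supercompact limit point of $\mathcal{K}$ would be a measurable limit point), note that each $\kappa\in\mathcal{K}$ is then only exactly $\delta_\kappa$-strongly compact while no new strongly compact cardinals appear, and observe that every limit point of $\mathcal{K}$ is almost strongly compact because the $\delta_{\kappa_i}$ are cofinal in it and $\delta$-strong compactness transfers upward via Theorem \ref{tKU}. Your write-up merely makes explicit the upward-transfer and cofinality details that the paper leaves as a remark.
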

Next, we deal with another case.
\begin{theorem}\label{thm2}
Suppose $\mathcal{K}$ is a set of supercompact cardinals and has order type less than or equal to  $\min(\mathcal{K})+1$, and $\langle \delta_{\kappa} \mid \kappa \in \mathcal{K} \rangle$ is an increasing sequence of measurable cardinals with $\sup_{\kappa\in \mathcal{K}}\delta_{\kappa} \leq \min(\mathcal{K})$. Then there exists a forcing extension, in which $\kappa$ is exactly $\delta_{\kappa}$-strongly compact for any $\kappa \in \mathcal{K}$.
Moreover, if $\kappa=\min(\mathcal{K})$, then $\kappa$ is the least exactly $\delta_{\kappa}$-strongly compact cardinal. In addition, there is no strongly compact cardinal and $\gch$ holds.
\end{theorem}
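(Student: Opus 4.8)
The plan is to combine the single-cardinal construction of Proposition \ref{t1} with the simultaneous product technique already used in Theorem \ref{thm1}, the essential new feature being that every measurable $\delta_\kappa$ now lies below $\min(\mathcal K)$, so the forcings can no longer be separated into disjoint blocks. Write $\kappa_0=\min(\mathcal K)$ and fix a regular $\eta<\delta_{\kappa_0}$. I would use a single Easton-support iteration $\mathbb P=\langle\langle\mathbb P_\alpha,\dot{\mathbb R}_\alpha\rangle\mid\alpha\le\sup\mathcal K\rangle$ in which $\dot{\mathbb R}_\alpha$ names $Q_{\kappa,\delta_\kappa}$ when $\alpha=\kappa\in\mathcal K$; names $\mathrm{Add}(\alpha,1)$ when $\alpha>\kappa_0$ is a strong cardinal limit of strong cardinals outside $\mathcal K$ (the shared resurrection preparation, playing the role of $A$ in Proposition \ref{t1}); names $Q_{\alpha,\eta}$ when $\alpha<\kappa_0$ is the least inaccessible limit of strong cardinals above some ordinal (the role of $B$, used only below $\kappa_0$ so as to make $\kappa_0$ least); and names the trivial forcing otherwise. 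As in Theorem \ref{thm1}, the standard Easton bookkeeping gives $V^{\mathbb P}\vDash\zfc+\gch$, and every $\dot{\mathbb R}_\alpha$ is forced to be ${<}\eta$-directed closed.

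For the not-$\delta_\kappa^+$ direction, I would factor $\mathbb P\cong\mathbb P_\kappa*\dot Q_{\kappa,\delta_\kappa}*\dot{\mathbb P}_{>\kappa}$, observe that the tail $\dot{\mathbb P}_{>\kappa}$ is forced to be ${<}\kappa^+$-strategically closed and hence adds no new uniform ultrafilter on $\kappa$, and then apply Corollary \ref{kill} in $V^{\mathbb P_\kappa}$ (via the $\delta_\kappa$-ascent $\kappa$-Suslin tree added at stage $\kappa$) together with Theorem \ref{tKU} to conclude that $\kappa$ carries no $\delta_\kappa^+$-complete uniform ultrafilter in $V^{\mathbb P}$. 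For the $\delta_\kappa$ direction I would rerun the embedding argument of Proposition \ref{t1} inside $V^{\mathbb P}$ for each fixed $\kappa$ separately: take a normal measure $W$ on $\delta_\kappa$ and a supercompactness embedding $j$ of $\kappa$, form $\pi=j\circ i_W$, lift $i_W$ by splitting $\mathbb P$ at $\crit(i_W)=\delta_\kappa$ (the part below $\delta_\kappa$ is fixed pointwise, while the tail above it is ${<}\delta_\kappa^+$-strategically closed), resurrect through $Q_{\kappa,\delta_\kappa}$ using $\mathrm{Add}(\kappa,1)\cong Q_{\kappa,\delta_\kappa}*\dot F_{\kappa,\delta_\kappa}$ and the $i_W(F_{\kappa,\delta_\kappa})$-generic generated by the lifted ascent path $i_W(\vec f^{\,g})$, and finally lift $j$ past an arbitrary singular strong limit $\lambda$ of cofinality $>\kappa$ exactly as before, using Lemma \ref{lem10} to get $\kappa\in\pi(A)$ and Lemma \ref{lem11} to ensure no strong cardinal in $(\kappa,\lambda]$. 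The new point is that the coordinates of $\mathbb P$ lying in $\mathcal K\cap\kappa$ are fixed by $\pi$ (because $\pi$ fixes each smaller $\kappa'$ together with $\delta_{\kappa'}$, and $\kappa'\in\pi(\mathcal K)$), so $\pi(\mathbb P)$ agrees with $\mathbb P$ below $\kappa$ and the generic already chosen below $\kappa$ serves unchanged, while the coordinates above $\kappa$ are swallowed into the tail generic built in the usual diagonal fashion.

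Leastness of $\kappa_0$ follows as in Proposition \ref{t1}: the $Q_{\alpha,\eta}$-stages kill $\eta^+$-strong compactness cofinally below $\kappa_0$, and since $\eta^+\le\delta_{\kappa_0}$ and being $\delta_{\kappa_0}$-strongly compact is stronger than being $\eta^+$-strongly compact, no cardinal below $\kappa_0$ is $\delta_{\kappa_0}$-strongly compact. That no new strongly compact cardinal appears—and that the members of $\mathcal K$ and any limit point of $\mathcal K$ remain non-strongly-compact—I would verify by the same case analysis as in Theorem \ref{thm1}: a putative strongly compact $\theta$ above $\sup\mathcal K$ is moved only by a small forcing and so was already strongly compact in $V$; a $\theta$ that is not a limit point of $\mathcal K$ would be $\eta_\kappa^+$-strongly compact below the next member $\kappa\in\mathcal K$, contradicting leastness there; and a limit point $\theta$ of $\mathcal K$ is treated by the $\theta$-c.c.\ together with the weak-compactness tree argument reflecting a normal measure down to $V$.

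The main obstacle I anticipate is exactly the bookkeeping of these simultaneous resurrections when the supercompacts are packed as tightly as $\mathcal K$ having order type up to $\min(\mathcal K)+1$ while every $\delta_\kappa$ sits below $\min(\mathcal K)$. For each $\kappa$ one must check that $\pi$ factors $\pi(\mathbb P)$ so that the other tree forcings $Q_{\kappa',\delta_{\kappa'}}$ (for $\kappa'\in\mathcal K$ lying between $\kappa$ and $j(\kappa)$) can be absorbed into the tail generic, and that the low $Q_{\alpha,\eta}$-stages sitting below $\delta_\kappa$—which obstruct the naive claim that $\mathbb P$ is ${<}\delta_\kappa^+$-strategically closed—cause no trouble, precisely because they are fixed under $i_W$. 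The order-type hypothesis is what keeps $|\mathcal K|$, and hence the relevant antichain counts, small enough for the length-$\lambda^+$ master-condition sequences to be constructed, playing here the role that $\sup(\mathcal A\cap\kappa)<\delta_\kappa$ played in Theorem \ref{thm1}.
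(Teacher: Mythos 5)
Your construction has a genuine gap, and it stems from reading the hypothesis $\sup_{\kappa\in\mathcal K}\delta_\kappa\leq\min(\mathcal K)$ as strict inequality: you say ``every measurable $\delta_\kappa$ now lies \emph{below} $\min(\mathcal K)$,'' but the theorem allows $\delta_{\kappa^*}=\kappa_0$ for $\kappa^*=\max(\mathcal K)$, where $\kappa_0=\min(\mathcal K)$, and indeed this case is \emph{forced} whenever $\mathrm{otp}(\mathcal K)=\kappa_0+1$ (an increasing sequence of measurables bounded by $\kappa_0$ of that length must end with value exactly $\kappa_0$). In that case your forcing provably fails to give the conclusion. You place $Q_{\kappa_0,\delta_{\kappa_0}}$ at $\kappa_0$ itself, so in $V^{\mathbb P}$ there is a $\delta_{\kappa_0}$-ascent $\kappa_0$-Suslin tree and, by Lemma \ref{lemma1}, $\kappa_0$ carries no $\kappa_0$-complete uniform ultrafilter, i.e.\ $\kappa_0$ is not measurable; also, by Corollary \ref{kill} applied to the tree at $\kappa^*$, $\kappa^*$ is not $\kappa_0^+$-strongly compact. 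But then $\kappa^*$ cannot be $\kappa_0$-strongly compact either: by Theorem \ref{tKU} one would need, for every regular $\lambda\geq\kappa^*$, a $\kappa_0$-complete uniform ultrafilter on $\lambda$; the completeness degree of such an ultrafilter is a measurable cardinal $\geq\kappa_0$, hence (as $\kappa_0$ is not measurable) at least the first measurable above $\kappa_0$, which exceeds $\kappa_0^+$; so every such ultrafilter is $\kappa_0^+$-complete and $\kappa^*$ would be $\kappa_0^+$-strongly compact, a contradiction. This is also visible as a failure inside your lifting argument: your claim that the part of $\mathbb P$ above $\crit(i_W)=\delta_\kappa$ is ${<}\delta_\kappa^+$-strategically closed is false when $\delta_\kappa=\kappa_0$ (the stage at $\kappa_0$ is only ${<}\kappa_0$-strategically closed), and transferring the stage-$\kappa_0$ generic along an embedding with critical point $\kappa_0$ would require a condition of height above $\kappa_0$ in the image forcing, i.e.\ a cofinal branch through a $\kappa_0$-Suslin tree, which does not exist in $V^{\mathbb P}$.

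This boundary case is precisely what the paper's proof is engineered around, and it is why its construction differs from yours in a way that is not cosmetic: for every non-maximal $\kappa\in\mathcal K$ the Suslin tree is added at $\kappa'$, the least $2$-Mahlo cardinal \emph{above} $\kappa$ (Proposition \ref{t2} with its full preparation at $\kappa_0$, and the bare forcing $Q_{\kappa',\delta_\kappa}$ at the other non-maximal members), so that $\kappa_0$ --- the potential value of $\delta_{\max(\mathcal K)}$ --- and in fact all the $\delta_\kappa$ remain measurable in the extension, with only $\max(\mathcal K)$ receiving the tree at itself; this is exactly the point of the paper's remark that Proposition \ref{t2} rather than Proposition \ref{t1} must be used at $\kappa_0$. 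Moreover, the paper works with an Easton \emph{product} and invokes the standing indestructibility assumption of Section \ref{sec4} together with Remark \ref{rmk2} to run the resurrection for members above $\kappa_0$, rather than a single iteration with a shared $\mathrm{Add}$-preparation, which spares it from having to match the recursive definition of the iteration inside the image models. Two smaller defects: your ``no new strongly compact cardinals'' argument appeals to leastness at the next member of $\mathcal K$, but in this theorem the members $\kappa>\kappa_0$ are \emph{not} least $\delta_\kappa$-strongly compact (they have no $B$-part); one should argue instead that the tree at $\kappa$ (or $\kappa'$) kills $\delta_\kappa^+$-strong compactness of everything in $(\delta_\kappa,\kappa]$, which suffices since all $\delta_\kappa\leq\kappa_0$. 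And the order-type hypothesis plays no role in antichain counting; it is simply the condition implied by requiring $\langle\delta_\kappa\mid\kappa\in\mathcal K\rangle$ to be increasing with supremum at most $\kappa_0$.
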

\begin{proof}
For every $\kappa \in \mathcal{K}$, let $\kappa'$ be the least $2$-Mahlo cardinal above $\kappa$ if $\kappa<\sup(\mathcal{K})$, and $\kappa$, otherwise. Let $\kappa_0=\min(\mathcal{K})$. Let $\mathbb{P}_{\kappa}$ be a forcing given by Proposition $\ref{t2}$ with $\eta=\omega_1$ if $\kappa=\kappa_0$, and $Q_{\kappa',\delta_{\kappa}}$ if $\kappa\in \mathcal{K}\setminus (\kappa_0+1)$. Let $\mathbb{P}$ be the Easton product forcing $\Pi_{\kappa \in \mathcal{K}}\mathbb{P}_{\kappa}$. Then $V^{\mathbb{P}}\vDash \zfc +\gch$.
Note that $\gch$ holds in $V$,
it follows that $\delta_{\kappa}$ remains measurable in $V^{\mathbb{P}}$
by the comment below Proposition $\ref{t2}$.

For every $\kappa \in \mathcal{K}$, we may factor $\mathbb{P}$ in $V$ as $\mathbb{Q}^{\kappa} \times \mathbb{P}_{\kappa} \times \mathbb{Q}_{<\kappa}$, where $\mathbb{Q}^{\kappa}=\Pi_{\alpha>\kappa}\mathbb{P}_{\alpha}$ and $\mathbb{Q}_{<\kappa}=\Pi_{\alpha<\kappa}\mathbb{P}_{\alpha}$.
Take any $\kappa \in \mathcal{K}$, and let $\lambda_{\kappa}=\min(\mathcal{K}\setminus (\kappa+1))$
(if $\kappa=\max(\mathcal{K})$, then let $\lambda_{\kappa}=\infty)$.
Then it is easy to see that $\mathbb{Q}^{\kappa}$ is ${<}\lambda_{\kappa}$-strategically closed.
Note also that $\kappa$ is supercompact and indestructible by any ${<}\kappa$-directed closed forcing not destroying $\gch$, we have
$V^{\mathbb{Q}^{\kappa}}\vDash ``\kappa$ is  ${<}\lambda_{\kappa}$-supercompact and indestructible by any ${<}\lambda_{\kappa}$-directed closed forcing not destroying $\gch"$.

We may factor $\mathbb{Q}_{<\kappa}$ as the product of $\mathbb{P}_{\kappa_0}$ and a ${<}\lambda_{\kappa_0}$-strategically closed forcing in $V_{\kappa}$.
Note also that if $i_W: V^{\mathbb{Q}^{\kappa}} \rightarrow M$ is an ultrapower map given by a normal measure $W$ over $\delta_{\kappa}$ such that $\kappa$ is not measurable in $M$,
then we may lift $i_W$ to an embedding with domain $V^{\mathbb{Q}^{\kappa}\times \mathbb{P}_{\kappa_0}}$.
Thus it is not hard to see that
$\kappa$ is $(\delta_{\kappa},\lambda_{\kappa})$-strongly compact in $V^{\mathbb{P}}$ by the argument in Proposition \ref{t1} and Remark \ref{rmk2}.

Since $\kappa \in \mathcal{K}$ is arbitrary, it follows that $\kappa$ is exactly $\delta_{\kappa}$-strongly compact in $V^{\mathbb{P}}$ by Theorem \ref{tKU}.
Moreover, $\kappa_0$ is the least $\delta_{\kappa_0}$-strongly compact cardinal in $V^{\mathbb{P}}$.
In addition, there is no strongly compact cardinal in $(\delta_{\kappa},\kappa]$ for every $\kappa \in \mathcal{K}$.
This means that there is no strongly compact cardinal $\leq \sup(\mathcal{K})$.
So there is no strongly compact cardinal in $V^{\mathbb{P}}$ by our assumption at the beginning of this section.
\end{proof}
In the above theorem, $\kappa \in \mathcal{K}$ may not be the least $\delta_{\kappa}$-strongly compact cardinal if $\kappa \neq \kappa_0$.
We used the forcing given by Proposition \ref{t2} instead of the forcing given by Proposition \ref{t1} for $\kappa=\kappa_0$, because
we need to preserve the measurability of $\kappa_0$ if $\kappa_0=\delta_{\max(\mathcal{K})}$.

Next, we combine Theorem \ref{thm1} and Theorem \ref{thm2}.
\begin{theorem}\label{thm7}
Suppose that $\mathcal{A}$ is a subclass of the class $\mathcal{K}$ of the supercompact cardinals containing none of its limit points,
and $\langle \delta_{\kappa} \mid \kappa \in \mathcal{K}\rangle$ is an increasing sequence of measurable cardinals such that $\delta_{\kappa}<\kappa$ 
for any $\kappa\in \mathcal{A}$.
Then in some generic extension,
$\kappa$ is exactly $\delta_{\kappa}$-strongly compact for any $\kappa \in \mathcal{A}$.
Moreover, $\kappa$ is the least exactly $\delta_{\kappa}$-strongly compact cardinal
if $\delta_{\kappa}>\sup(\mathcal{A} \cap \kappa)$.
\end{theorem}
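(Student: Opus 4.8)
The plan is to re-run the separated-block product of Theorem~\ref{thm1}, but with each block forcing chosen by a two-way case split that also imports the measurability-preserving device of Theorem~\ref{thm2}. For each $\kappa\in\mathcal{A}$ I would attach two flags. The \emph{leastness flag} records whether $\delta_\kappa>\sup(\mathcal{A}\cap\kappa)$: when it fires, $\mathbb{P}_\kappa$ should be one of the least-making iterations of Proposition~\ref{t1} or Proposition~\ref{t2}, i.e.\ the ones carrying the $B$-part that destroys $\eta_\kappa^+$-strong compactness cofinally in $\kappa$, with $\eta_\kappa=(2^{\sup(\mathcal{A}\cap\kappa)})^+$; when it does not fire, $\mathbb{P}_\kappa$ should be a bare tree forcing $Q_{\kappa,\delta_\kappa}$ or $Q_{\kappa',\delta_\kappa}$ as in Corollary~\ref{t4}. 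The \emph{measurability flag} records whether $\kappa$ itself occurs as some $\delta_\lambda$ with $\lambda\in\mathcal{K}$, $\lambda>\kappa$: when it fires I would put the tree at $\kappa'$, the least $2$-Mahlo above $\kappa$ (the $\kappa'$-variants of Proposition~\ref{t2} and the second clause of Corollary~\ref{t4}), so that $\kappa$ stays measurable, and otherwise the tree may sit at $\kappa$. The four combinations land exactly on Proposition~\ref{t1}, Proposition~\ref{t2}, and the two clauses of Corollary~\ref{t4}. I then set $\mathbb{P}=\Pi_{\kappa\in\mathcal{A}}\mathbb{P}_\kappa$ with Easton support. As in Theorems~\ref{thm1} and~\ref{thm2} the standard Easton argument gives $V^{\mathbb{P}}\vDash\mathrm{ZFC}+\mathrm{GCH}$, and since the only measurable that could sit inside a block is some $\delta_\kappa\in\mathcal{A}$, which the measurability flag protects, the comment after Proposition~\ref{t2} keeps every $\delta_\kappa$ measurable.

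Fixing $\kappa\in\mathcal{A}$, I would factor $\mathbb{P}=\mathbb{Q}^\kappa\times\mathbb{P}_\kappa\times\mathbb{Q}_{<\kappa}$ with $\mathbb{Q}^\kappa=\Pi_{\alpha\in\mathcal{A},\,\alpha>\kappa}\mathbb{P}_\alpha$ and $\mathbb{Q}_{<\kappa}=\Pi_{\alpha\in\mathcal{A},\,\alpha<\kappa}\mathbb{P}_\alpha$. Here it matters that for $\alpha>\kappa$ the block $\mathbb{P}_\alpha$ acts entirely above $\kappa$: if its leastness flag fired then $\delta_\alpha>\sup(\mathcal{A}\cap\alpha)\geq\kappa$ and the $A$/$B$-stages lie above $\delta_\alpha>\kappa$, while if it did not then the bare tree sits at $\alpha$ or $\alpha'>\kappa$. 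Thus $\mathbb{Q}^\kappa$ is ${<}\kappa$-directed closed and does not destroy $\mathrm{GCH}$, so Laver indestructibility keeps $\kappa$ supercompact in $V^{\mathbb{Q}^\kappa}$ and leaves the strong cardinals below $\kappa$ unchanged; hence $\mathbb{P}_\kappa$ has the same definition and effect over $V^{\mathbb{Q}^\kappa}$ as over $V$, and by Proposition~\ref{t1} or~\ref{t2} (resp.\ Corollary~\ref{t4}) the cardinal $\kappa$ is exactly $\delta_\kappa$-strongly compact in $V^{\mathbb{Q}^\kappa\times\mathbb{P}_\kappa}$, least precisely when the leastness flag fired.

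The genuinely new difficulty, and the step I expect to be the main obstacle, is the tail $\mathbb{Q}_{<\kappa}$: unlike in Theorem~\ref{thm1} it need not be small below $\delta_\kappa$, since $\mathcal{A}$ may meet $[\delta_\kappa,\kappa)$ and lower blocks may have stages beneath $\delta_\kappa$. (It is exactly to keep $\mathbb{P}$ a genuine product of separated blocks that I use the bare tree forcings, rather than the full $A$/$B$-iterations, whenever the leastness flag is off: the $A$/$B$-stages of a least-making block descend to just above its own $\delta_\alpha$ and would otherwise collide with earlier blocks.) Following Theorem~\ref{thm2} I would then peel off the part of $\mathbb{Q}_{<\kappa}$ acting above a suitable inaccessible $\theta\in(\delta_\kappa,\kappa)$ as a ${<}\theta$-strategically closed factor $\mathbb{Q}_0$, invisible to the $\delta_\kappa$-ultrapower by Easton's Lemma~\ref{clai3}, and collect the remainder as $\mathbb{Q}_1\in V_\theta$ (splitting the at most one block straddling $\theta$ along its iteration). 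The crux is to lift the small ultrapower $i_W:V\to M_W$ given by a normal measure on $\delta_\kappa$ to an embedding with domain $V^{\mathbb{Q}_1}$: this is precisely where the shared-measurable phenomenon bites, since $\delta_\kappa$ may coincide with or sit just above a block $\mathbb{P}_\alpha$ whose measurability-preserving ($\kappa'$-)design is exactly what lets the transfer go through, as in the displayed lifting of $i_W$ in Theorem~\ref{thm2}. Once $i_W$ is lifted through $\mathbb{Q}_1$, Remark~\ref{rmk2} and the argument behind it apply and deliver that $\kappa$ is exactly $\delta_\kappa$-strongly compact in $V^{\mathbb{P}}$, and the least such whenever $\delta_\kappa>\sup(\mathcal{A}\cap\kappa)$.

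The bookkeeping of the two flags, and the verification of $\mathrm{ZFC}+\mathrm{GCH}$, I expect to be routine given the single-cardinal results of Section~\ref{sec3}; the real work is checking that $i_W$ lifts through $\mathbb{Q}_1$ uniformly in $\kappa$, i.e.\ that the measurability-preserving choices made at all blocks below $\kappa$ that touch $\delta_\kappa$ are simultaneously compatible with one transfer argument, and that the straddling blocks split cleanly into the strategically closed and the small factors. If one also wants that no new strongly compact cardinal is created, this follows by the trichotomy of Theorem~\ref{thm1}: the forcing is small above $\sup(\mathcal{A})$, the $B$-killing refutes strong compactness strictly between blocks, and the $\theta$-c.c.\ together with the weakly compact tree argument forces any limit of $\mathcal{A}$ to be measurable already in $V$.
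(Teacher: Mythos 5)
There is a genuine gap, and it sits exactly where Theorem \ref{thm7} differs from Theorem \ref{thm1}: in the upper factor $\mathbb{Q}^{\kappa}$, not the lower one. You infer from ``for $\alpha>\kappa$ the block $\mathbb{P}_{\alpha}$ acts entirely above $\kappa$'' that $\mathbb{Q}^{\kappa}$ is ${<}\kappa$-directed closed, and then invoke Laver indestructibility to keep $\kappa$ supercompact in $V^{\mathbb{Q}^{\kappa}}$. But where a tree forcing ``sits'' is irrelevant to its directed closure: by Fact \ref{factsus}(\ref{ite00}), $Q_{\alpha',\delta_{\alpha}}$ is only ${<}\delta_{\alpha}$-directed closed (the $\delta_{\alpha}$-indexed ascent path obstructs amalgamation of directed families of size $\geq\delta_{\alpha}$), and nothing prevents $\delta_{\alpha}\leq\kappa$ for some $\alpha\in\mathcal{A}$ above $\kappa$ whose leastness flag is off --- indeed $\mathcal{A}=\{\kappa,\alpha\}$ with $\delta_{\kappa}<\delta_{\alpha}<\kappa<\alpha$ is precisely the configuration of Theorem \ref{thm2}, which Theorem \ref{thm7} is meant to absorb. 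In that case your intermediate claim is not merely unproved but false: in $V^{\mathbb{Q}^{\kappa}}$ there is a $\delta_{\alpha}$-ascent $\alpha'$-Suslin tree, so by Lemma \ref{lemma1} (together with the distributivity argument of Proposition \ref{t1}/Corollary \ref{kill}) no cardinal $\leq\alpha'$ is $\delta_{\alpha}^{+}$-strongly compact; since $\delta_{\alpha}^{+}\leq\kappa$, the cardinal $\kappa$ is not strongly compact, hence not supercompact, in $V^{\mathbb{Q}^{\kappa}}$, and Corollary \ref{t4} cannot be applied over that model. Consequently your conclusion that $\kappa$ is fully exactly $\delta_{\kappa}$-strongly compact in $V^{\mathbb{Q}^{\kappa}\times\mathbb{P}_{\kappa}}$ has no proof by this route: no embedding argument at $\kappa$ can reach targets $\theta\geq\alpha'$, because $\kappa$ is not $\theta$-supercompact in $V^{\mathbb{Q}^{\kappa}}$ for such $\theta$.

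What the paper does at this point, and what your proposal is missing, is a case split on $\lambda_{\kappa}=\min(\mathcal{A}\setminus(\kappa+1))$. If $\delta_{\lambda_{\kappa}}>\kappa$, then $\mathbb{Q}^{\kappa}$ really is ${<}\kappa$-directed closed and your argument (which is Theorem \ref{thm1}'s) works. If $\delta_{\lambda_{\kappa}}\leq\kappa$, one only uses that $\mathbb{Q}^{\kappa}$ is ${<}\lambda_{\kappa}$-strategically closed, hence distributive enough to preserve the ${<}\lambda_{\kappa}$-supercompactness of $\kappa$ (not full supercompactness, and not via indestructibility); the lifting argument of Proposition \ref{t1} and Remark \ref{rmk2} then yields only that $\kappa$ is $(\delta_{\kappa},\lambda_{\kappa})$-strongly compact in $V^{\mathbb{P}}$. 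Full $\delta_{\kappa}$-strong compactness is recovered globally rather than blockwise: for regular $\theta\geq\lambda_{\kappa}$, the required uniform ultrafilters are $\delta_{\beta}$-complete ultrafilters supplied by the higher blocks $\beta\in\mathcal{A}$, and these are $\delta_{\kappa}$-complete because $\langle\delta_{\beta}\mid\beta\in\mathcal{K}\rangle$ is increasing; one then concludes by Usuba's characterization, Theorem \ref{tKU}. This gluing step across blocks is the heart of the combined theorem and is entirely absent from your write-up. (Your treatment of the lower factor $\mathbb{Q}_{<\kappa}$ --- splitting at an inaccessible in $(\delta_{\kappa},\kappa)$ and lifting the small ultrapower through the small part as in Remark \ref{rmk2} --- is in line with the paper's case $\delta_{\kappa}\leq\sup(\mathcal{A}\cap\kappa)$; the defect is entirely on the other side.)
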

\begin{proof}
For every $\kappa \in \mathcal{A}$, let $\eta_{\kappa}=(2^{\sup(\mathcal{A}\cap \kappa)})^+$,
and let $\kappa'$ be the least $2$-Mahlo cardinal above $\kappa$ if $\kappa<\sup(\mathcal{K})$, and $\kappa$, otherwise.
Let $\mathbb{P}_{\kappa}$ be
\begin{itemize}
  \item the forcing $Q_{\kappa',\delta_{\kappa}}$ if $\delta_{\kappa}{\leq}\sup(\mathcal{A}\cap \kappa)$,
  \item the forcing given by Proposition \ref{t1} with $\eta=\eta_{\kappa}$ and $\delta=\delta_{\kappa}$ if $\kappa=\sup(\mathcal{A})$,
  \item the forcing given by Proposition \ref{t2} with $\eta=\eta_{\kappa}$ and $\delta=\delta_{\kappa}$, otherwise.
\end{itemize}
Let $\mathbb{P}$ be the Easton product forcing $\Pi_{\kappa \in \mathcal{A}}\mathbb{P}_{\kappa}$.
Then for every measurable cardinal $\delta$, if $\delta \neq \sup(\mathcal{A}\cap \kappa)$,
then $\delta$ remains measurable in $V^{\mathbb{P}}$.

Take any $\kappa \in \mathcal{A}$.
Again we may factor $\mathbb{P}$ as
$\mathbb{Q}^{\kappa} \times \mathbb{P}_{\kappa} \times \mathbb{Q}_{<\kappa}$.
Let $\lambda_{\kappa}=\min(\mathcal{A}\setminus (\kappa+1))$.
Then there are two cases to consider:

$\br$ $\delta_{\kappa}{>}\sup(\mathcal{A}\cap \kappa)$.
If $\delta_{\lambda_{\kappa}}> \kappa$,
then $\mathbb{Q}^{\kappa}$ is ${<}\kappa$-directed closed, and $\mathbb{Q}_{\kappa}$ has size less than $\delta_{\kappa}$.
So $\kappa$ is the least exactly $\delta_{\kappa}$-strongly compact cardinal in $V^{\mathbb{P}}$ by the argument in Theorem \ref{thm1}.

If $\delta_{\lambda_{\kappa}} \leq \kappa$,
then $\kappa$ is ${<}\lambda_{\kappa}$-supercompact in $V^{\mathbb{Q}^{\kappa}}$.
Note that $\mathbb{Q}_{\kappa}$ has size less than $\delta_{\kappa}$,
it follows that $\kappa$ is $(\delta_{\kappa},\lambda_{\kappa})$-strongly compact in $V^{\mathbb{P}}$.

$\br$ $\delta_{\kappa}{\leq}\sup(\mathcal{A}\cap \kappa)$.
If $\delta_{\kappa}<\sup(\mathcal{A}\cap \kappa)$, then $\sup(\mathcal{A}\cap \delta_{\kappa})<\delta_{\kappa}$.
We may factor $\mathbb{Q}_{<\kappa}$ as the product of $\Pi_{\theta \in \mathcal{A}, \delta_{\kappa}\leq \theta<\kappa}\mathbb{P}_{\theta}$ and $\Pi_{\theta \in \mathcal{A}\cap \delta_{\kappa}}\mathbb{P}_{\theta}$.
Then $\kappa$ is exactly $(\delta_{\kappa},\lambda_{\kappa})$-strongly compact
in $V^{\mathbb{Q}^{\kappa} \times \mathbb{P}_{\kappa} \times \Pi_{\theta \in \mathcal{A}, \delta_{\kappa}\leq \theta<\kappa}\mathbb{P}_{\theta}}$ by Theorem \ref{thm2}.
Note also that $\Pi_{\theta \in \mathcal{A}\cap \delta_{\kappa}}\mathbb{P}_{\theta}$ has size less than $\delta_{\kappa}$,
it follows that $\kappa$ is exactly $(\delta_{\kappa},\lambda_{\kappa})$-strongly compact in $V^{\mathbb{P}}$.

If $\delta_{\kappa}=\sup(\mathcal{A}\cap \kappa)$, then $\delta_{\kappa}$ is a measurable limit point of $\mathcal{A}$ in $V$ and $\mathcal{A}\cap [\delta_{\kappa},\kappa)=\emptyset$.
Then $\kappa$ is exactly $(\delta_{\kappa},\lambda_{\kappa})$-strongly compact by Remark \ref{rmk2}.

Thus in $V^{\mathbb{P}}$, $\kappa$ is exactly $\delta_{\kappa}$-strongly compact for any $\kappa \in \mathcal{A}$.
Moreover, $\kappa$ is the least exactly $\delta_{\kappa}$-strongly compact cardinal
if $\delta_{\kappa}>\sup(\mathcal{A} \cap \kappa)$ in $V^{\mathbb{P}}$.

This completes the proof of Theorem \ref{thm7}.
\end{proof}
This answers Question \ref{QBM} affirmatively.
\section{Open Problems}
By the results of Goldberg in \cite{G2020}, it is not hard to see the following theorem holds:
\begin{theorem}[\cite{G2020}]
Suppose $\kappa$ is an almost strongly compact cardinal. Then exactly one of the following holds:
\begin{enumerate}
  \item\label{pro1} $\kappa$ has cofinality $\omega$, and $\kappa$ is not a limit of almost strongly compact cardinals.
  \item\label{pro2} $\kappa$ is the least $\omega_1$-strongly compact cardinal, but $\kappa$ is not strongly compact.
  \item\label{pro3} $\kappa$ is a strongly compact cardinal.
  \item\label{pro4} $\kappa$ is the successor of a strongly compact cardinal.
  \item\label{pro5} $\kappa$ is a non-strongly compact limit of almost strongly compact cardinals.
\end{enumerate}
\end{theorem}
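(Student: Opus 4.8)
The plan is to assemble the statement from three facts already in play together with one structural lemma about $\omega_1$-strongly compact cardinals. The facts are: that almost strong compactness implies $\omega_1$-strong compactness; that $\sch$ holds above the least $\omega_1$-strongly compact cardinal (the special case $\delta=\omega_1$ of the ``least $\delta$-strongly compact implies $\sch$ above'' principle); and Goldberg's classification \cite[Theorem 5.7]{G2020} of almost strongly compact cardinals of uncountable cofinality under $\sch$. First I would fix notation: since $\kappa$ is almost strongly compact it is $\omega_1$-strongly compact, so the class of $\omega_1$-strongly compact cardinals is nonempty; let $\kappa_0$ be its least member, whence $\kappa\geq\kappa_0$. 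I then record the ingredient I regard as the crux: every \emph{singular} $\omega_1$-strongly compact cardinal is a limit of $\omega_1$-strongly compact cardinals, so that a non-limit one is regular; in particular $\kappa_0$, being least, is regular and hence of uncountable cofinality. I also record two bookkeeping observations: no almost strongly compact cardinal lies below $\kappa_0$ (each would be $\omega_1$-strongly compact), so $\kappa_0$ is not a limit of almost strongly compact cardinals; and that strongly compact cardinals are measurable, hence regular and not successors.

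For exhaustiveness I would branch as follows. If $\kappa$ is strongly compact we are in alternative~\ref{pro3}. Otherwise, if $\kappa=\kappa_0$ then $\kappa$ is by definition the least $\omega_1$-strongly compact cardinal and is not strongly compact, giving alternative~\ref{pro2}. If $\kappa>\kappa_0$, then $\kappa$ lies above $\kappa_0$, where $\sch$ holds, so Goldberg's theorem is applicable, and I split on cofinality. When $\cof(\kappa)>\omega$, Goldberg's theorem yields that $\kappa$ is strongly compact, the successor of a strongly compact cardinal, or a limit of almost strongly compact cardinals; since we are in the non-strongly-compact branch this is alternative~\ref{pro4} or, as a non-strongly-compact limit, alternative~\ref{pro5}. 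When $\cof(\kappa)=\omega$, I branch on whether $\kappa$ is a limit of almost strongly compact cardinals: if it is, that is alternative~\ref{pro5}, and if it is not, that is alternative~\ref{pro1}. Thus at least one alternative always holds.

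The more delicate task is pairwise exclusivity, so that \emph{exactly} one holds. Alternative~\ref{pro3} is the unique one with $\kappa$ strongly compact, while \ref{pro1}, \ref{pro2}, \ref{pro4}, \ref{pro5} each force $\kappa$ non-strongly-compact (cofinality $\omega$ and being a successor both rule out measurability, and \ref{pro2}, \ref{pro5} say so outright). Alternative~\ref{pro4} has $\kappa$ regular of uncountable cofinality and not a limit cardinal, separating it from \ref{pro1} and \ref{pro5}, and the strongly compact cardinal whose successor it is lies below $\kappa$, so $\kappa$ is not least $\omega_1$-strongly compact, separating it from \ref{pro2}. Likewise \ref{pro5}, being a limit of almost strongly compact cardinals, is not the least $\omega_1$-strongly compact cardinal and is not covered by \ref{pro1}.

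The one genuinely load-bearing point, and the step I expect to be the main obstacle, is the disjointness of \ref{pro1} and \ref{pro2}, which is exactly where the crux lemma enters. Since $\kappa_0$ has uncountable cofinality it cannot satisfy the cofinality-$\omega$ hypothesis of \ref{pro1}; conversely any $\kappa$ of cofinality $\omega$ is distinct from $\kappa_0$ and so is not the least $\omega_1$-strongly compact cardinal of \ref{pro2}. This is precisely the place where the failure of Fodor's lemma at $\omega$ matters: were the least $\omega_1$-strongly compact cardinal allowed to have cofinality $\omega$, it would witness both \ref{pro1} and \ref{pro2} at once, and the only thing forbidding this is the cofinality lemma for $\omega_1$-strongly compact cardinals, so the whole weight of ``exactly one'' rests there.
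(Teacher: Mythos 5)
The paper offers no proof of this theorem (it is quoted as a consequence of Goldberg's results), so your attempt has to stand on its own. Its architecture --- split on whether $\kappa$ is strongly compact, whether $\kappa$ equals the least $\omega_1$-strongly compact cardinal $\kappa_0$, and on $\cof(\kappa)$, feeding the case $\kappa>\kappa_0$ of uncountable cofinality into Goldberg's Theorem 5.7 localized above $\kappa_0$ --- is the right reconstruction. The gap is precisely at the step you yourself call load-bearing: the ``crux lemma'' is asserted, never proved, and is misstated. Since $\delta$-strong compactness passes upward from a cardinal to every larger cardinal (immediate from Definition \ref{dscc}: the same embedding and covering set witness $(\delta,\theta)$-strong compactness of any larger cardinal), every cardinal above $\kappa_0$ is $\omega_1$-strongly compact, so ``every singular $\omega_1$-strongly compact cardinal is a limit of $\omega_1$-strongly compact cardinals'' says exactly ``$\kappa_0$ is regular'' and nothing more. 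That regularity claim follows from nothing quoted in the paper, and the Bagaria--Magidor constructions recalled in the introduction \cite{BMO2014}, which use Radin forcing precisely to make the least $\delta$-strongly compact cardinal singular of measurable cofinality, show it cannot be taken for granted. What your proof actually needs --- and what is true --- is only $\cof(\kappa_0)>\omega$; and since disjointness of \eqref{pro1} and \eqref{pro2} is exactly the claim that no almost strongly compact cardinal of countable cofinality is the least $\omega_1$-strongly compact cardinal, with this lemma unproven the words ``exactly one'' are unproven.

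The cofinality fact is a genuine theorem requiring the ultrafilter machinery, not bookkeeping. One route: suppose $\kappa_0$ is $\omega_1$-strongly compact, $\cof(\kappa_0)=\omega$, and no cardinal below $\kappa_0$ is $\omega_1$-strongly compact. By Theorem \ref{tKU} there are regular $\mu_n$ cofinal in $\kappa_0$ carrying no $\sigma$-complete uniform ultrafilter. Fix a nonprincipal ultrafilter $D$ on $\omega$; the linear order $(\prod_n\mu_n/D,<_D)$ is $\kappa_0$-directed (fewer than $\kappa_0$ elements are dominated by their pointwise supremum computed on the tail where $\mu_n$ exceeds their number), so its cofinality $\theta$ is regular, hence $\theta>\kappa_0$ as $\kappa_0$ is singular. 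Fix a $<_D$-increasing cofinal sequence $\langle f_\alpha \mid \alpha<\theta\rangle$ and, by Theorem \ref{tKU} again, a $\sigma$-complete uniform ultrafilter $U$ on $\theta$. For each $n$, the pushforward of $U$ under $\alpha\mapsto f_\alpha(n)$ is a $\sigma$-complete ultrafilter on $\mu_n$, so it cannot be uniform; by regularity of $\mu_n$ there is $\gamma_n<\mu_n$ with $B_n=\{\alpha<\theta \mid f_\alpha(n)<\gamma_n\}\in U$. By $\sigma$-completeness $B=\bigcap_n B_n\in U$; but every $f_\alpha$ with $\alpha\in B$ is $<_D$-below $\langle\gamma_n\rangle_n$, which is $<_D$-below some $f_\beta$ by cofinality, so $B\subseteq\beta$, contradicting uniformity of $U$. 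Hence all sufficiently large regular $\mu<\kappa_0$ carry $\sigma$-complete uniform ultrafilters, whence (Theorem \ref{tKU} once more) some cardinal below $\kappa_0$ is $\omega_1$-strongly compact --- a contradiction. Either supply this argument or cite the corresponding result in \cite{BMO2014} or \cite{G2020}; with it, your skeleton closes. A smaller final point: you apply Goldberg's Theorem 5.7 to $\kappa>\kappa_0$ knowing only that $\sch$ holds above $\kappa_0$, whereas the theorem is quoted under $\sch$ outright; this localization is the same one the paper attributes to Goldberg's Theorem 5.8, but it is an additional appeal and should be flagged as such.
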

According to Theorem \ref{coric}, we proved that consistently \eqref{pro1} is possible under suitable large cardinal assumptions.
\eqref{pro3}, \eqref{pro4} and \eqref{pro5} are trivial.
However, we don't know whether \eqref{pro2} is possible or not.
\begin{question}
If the least almost strongly compact cardinal is the least $\omega_1$-strongly compact cardinal, is it necessarily strongly compact?
\end{question}
Under the assumption of $\sch$, Goldberg in \cite[Theorem 5.7]{G2020} gave an affirmative answer to this question.
So to get a negative consistency result, one has to violate $\mathrm{SCH}$ at unboundedly many cardinals below $\kappa$.
\bibliography{ref}
\end{document}